\def\cal{\mathcal}
\newcommand{\comment}[1]{}
\newcommand{\ind}{{\bf 1}}
\def\indd#1{{\bf 1}_{\{#1\}}}
\newcommand{\proba}{\mathbb P}
\newcommand{\esp}{{\mathbb E}}
\newcommand{\sign}{{\rm{sign}}}
\newcommand{\defe}{\mathrel{\mathop:}=}
\newcommand{\inv}{^{-1}}
\newcommand{\calA}{{\cal A}}
\newcommand{\calB}{{\cal B}}
\newcommand{\calC}{{\cal C}}
\newcommand{\calL}{{\cal L}}
\newcommand{\calM}{{\cal M}}
\newcommand{\calN}{{\cal N}}
\newcommand{\calS}{{\cal S}}
\def\C{{\mathbb C}}
\newcommand{\eqnh}{\begin{eqnarray*}}
\newcommand{\eqne}{\end{eqnarray*}}
\newcommand{\eqnhn}{\begin{eqnarray}}
\newcommand{\eqnen}{\end{eqnarray}}
\newcommand{\equh}{\begin{equation}}
\newcommand{\eque}{\end{equation}}
\def\summ#1#2#3{\sum_{#1 = #2}^{#3}}
\def\sif#1#2{\sum_{#1=#2}^\infty}
\newcommand{\eqd}{\stackrel{\rm d}{=}}
\def\topp#1{^{(#1)}}
\def\nn#1{{\left\|#1\right\|}}
\def\snn#1{\|#1\|}
\def\abs#1{\left|#1\right|}
\def\ccbb#1{\left\{#1\right\}}
\def\pp#1{\left(#1\right)} 
\def\d{{\rm d}}
\def\P{{\mathbb P}}
\def\mand{\mbox{ and }}
\def\qmwith{\quad\mbox{ with }\quad}
\def\mfa{\mbox{ for all }}
\def\mmas{\mbox{ as }}
\def\what#1{\widehat{#1}}
\def\weakto{\Rightarrow}
\def\R{{\mathbb R}}
\def\Rd{{\mathbb R^d}}
\def\N{{\mathbb N}}
\def\RdRp{{\Rd\times\R_+}}
\def\S{\cS}
\def\cal#1{\mathcal{#1}}
\newcommand{\cL}{{\mathcal L}}
\newcommand{\cS}{{\mathcal S}}
\newcommand{\Leb}{{\rm Leb}}
\newtheorem{Thm}{Theorem}[section]
\newtheorem{Lem}[Thm]{Lemma}
\newtheorem{Prop}[Thm]{Proposition}
\theoremstyle{definition}
\newtheorem{Rem}[Thm]{Remark}
\numberwithin{equation}{section}
\begin{document}

\title[Generalized operator-scaling random ball model]{Generalized operator-scaling random ball model}

\author{Hermine Bierm\'e}
\address
{
Hermine Bierm\'e,
Laboratoire de Math\'ematiques et Applications UMR CNRS 7348,
Universit\'e de Poitiers, 
Boulevard Marie et Pierre Curie
86962 Futuroscope Chasseneuil Cedex, France.
}
\email{hermine.bierme@math.univ-poitiers.fr}

\author{Olivier Durieu}
\address{
Olivier Durieu\\
Institut Denis Poisson, UMR-CNRS 7013\\
Universit\'e de Tours, Parc de Grandmont, 37200 Tours, France.
}
\email{olivier.durieu@univ-tours.fr}

\author{Yizao Wang}
\address
{
Yizao Wang,
Department of Mathematical Sciences,
University of Cincinnati,
2815 Commons Way, ML--0025,
Cincinnati, OH, 45221-0025.
}
\email{yizao.wang@uc.edu}

\date{\today}

\begin{abstract}
This article introduces the operator-scaling random ball model, generalizing the isotropic random ball 
models investigated recently in the literature to anisotropic setup. The model is introduced as a generalized random field and results on weak convergence are established in the space of tempered distributions.
\end{abstract}

\keywords{Operator-scaling, stable random field, limit theorem, anisotropic random field, random ball model, generalized random field}
\subjclass[2010]{Primary, 60F05, 60G60; secondary, 60G52}

\maketitle

\section{Introduction}

In the past ten years, random ball models have appeared as a simple and yet flexible  class of random fields that characterize various types of spatial dependence structures \citep{kaj07scaling,breton09rescaled,bierme10selfsimilar,breton11functional,gorgens14gaussian,gobard15random,breton15infinite,bierme06poisson,pili2016}. In particular, in several regimes, their scaling limits are self-similar and with long-range dependence 
\citep{samorodnitsky16stochastic,pipiras17long,beran13long}. 
Such properties are desirable when modeling various real world phenomena and thus such results have a broad range of applications.

In words, a random ball model consists in  a collection of random balls in $\Rd$ with locations following a homogeneous Poisson point process and with independent and identically distributed random radius and weights.
Thus, each realization of random balls on the space can be naturally viewed as a linear functional on an appropriate space of test functions. Asymptotic behaviors are then of interest, when all the balls are simultaneously rescaled by a parameter $\rho$, and at the same time the intensity of balls also changes with respect to $\rho$.  Under mild assumption on the distribution of the radius, limit theorems can be established for $\rho\to 0$ or $\rho\to \infty$, corresponding to the zoom-out or zoom-in cases respectively. 
In both cases, the qualitative behavior of the limit random fields, whether exhibiting spatial dependence or not, depends on whether the random balls are dense or sparse in the limit, in certain sense to be specified below.

The random ball models can be viewed as generalizations of certain one-dimensional models based on Poisson point processes that appeared in the study of Internet traffics, see for example \citep{mikosch02network,kaj08convergence} and references therein.
However, the extension to  high dimensions presents
 new technical challenges, and should not be viewed as simple generalization of the one-dimensional results. In particular, the developments  until now
  have two main limitations. 
First, results so far in the literature focus on isotropic random ball models (except for \cite{pili2016}). That is, the random fields have the same distribution in each different direction. This feature, from the application point of view, makes the model much less attractive.  
Second,  the tightness of the scaled random fields is difficult to establish. Usually random ball models are defined as a random field $\{X(\mu)\}_{\mu\in\calM}$ indexed by a family of measures $\calM$ on $\Rd$. The tightness of such random fields, after appropriate normalizations, is only established for very restricted classes of $\calM$ \citep{breton15infinite,breton11functional}. 

The goal of this paper is to establish limit theorems for a general class of random ball models, and to remove the aforementioned two limitations. 

First, we provide a general framework of random ball models exhibiting anisotropic features and hence include all previously considered ones as special cases.
It is now well understood that a natural generalization of notion of self-similarity, widely used in stochastic processes and time series, is the so-called {\em operator-scaling} property for random fields introduced in \citet{bierme07operator}. A random field $\{Z_t\}_{t\in\Rd}$ is said to be $(E,H)$-operator-scaling, if
\begin{equation}\label{def:op-sc}
\ccbb{Z_{c^E t}}_{t\in\Rd}\eqd c^H\ccbb{Z_t}_{t\in\Rd}, \mfa c>0,
\end{equation}
where $E$ is an appropriate $d\times d$ matrix, $c^E:= \sif k0(E\log c)^k/k!$ is also a matrix, and $H>0$.
Taking $E$ to be the identity matrix, the above says that the random field $Z$ is self-similar. The motivation of allowing general matrix $E$ is to generalize this notion to anisotropic random fields. Such random fields are often of practical importance in various applications, and they also present theoretical challenges. Families of anisotropic random fields are known, and path properties have been investigated. See for example \citep{bierme09holder,li15exact,meerschaert13,xiao09sample}.
At the same time, the development of limit theorems for anisotropic random fields is still at an early stage. For some recent results, see for example 
\citep{bierme17invariance,li12occupation,wang14invariance,puplinskaite15scaling,lavancier07invariance,durieu17random,shen17operator}.
In this article, we also consider more general random sets than balls, precisely sets of finite perimeter.

Second, we view the random ball models as {\em distribution-valued random elements}, also known as {\em generalized random fields}, and establish weak convergence in the space of tempered distributions. 
 A complete description of self-similar generalized Gaussian random fields was obtained in \cite{dobrushin79gaussian} and allows to obtain essentially all Gaussian, translation- and rotation-invariant, $H$-self-similar generalized random field as scaling limits
of a random balls model in \cite{bierme10selfsimilar}. Beyond the Gaussian framework, generalized L\'evy random field, including stable generalized random field 	have been investigated in \cite{UnserBook}, where they are named as sparse stochastic processes. 
Distribution-valued random variables and stochastic processes are already widely used to describe fluctuations of empirical measures of complex particle systems, including notably interacting particle systems \citep{kipnis99scaling} and branching particle systems~\citep{holley78generalized,kipnis99scaling,bojdecki07long,li12occupation}, just to mention a few.

The paper is organized as follows. Section~\ref{sec:prelim} presents background on generalized random fields, the precise definition of the random ball model, and the four regimes of convergence that we investigate. 
The limit theorems are stated in Section~\ref{sec:limit}, while their proofs are postponed in Section~\ref{sec:proof}.
In Section~\ref{sec:prop}, we study statistical properties of the limit random fields. To conclude, a pointwise representation is obtained in Section~\ref{sec:represent} and some illustrations are given in the appendix.

Throughout, $C$ stands for real constants that may change values from line to line. Without ambiguity, for $x\in\Rd$, $|x|$ denotes its Euclidean norm. We write $a\vee b = \max(a,b)$ and $a\wedge b = \min(a,b)$ for $a,b\in\R$.


\section{Background and definitions}\label{sec:prelim}

\subsection{Generalized random fields}
The standard references for generalized random fields include notably \citep{gelfand64generalized,gelfand64generalizedV1,dobrushin79gaussian,kallianpur95stochastic,fernique67processus}.
In words, these fields are defined as random variables with values in a space of distributions (or generalized functions).
To this end we consider the Schwartz space $\cS(\R^d)$ of all
real-valued infinitely differentiable rapidly decreasing functions on $\R^d$, and $\cS'(\R^d)$
its topological dual, the space of tempered distribution.  As usual $\cS(\R^d)$ is equipped
with the topology that corresponds to the following notion of
convergence: $f_n\rightarrow f$ if and only if for all
$N\in\N := \{0,1,2,\dots\}$ and $j=(j_1,\ldots,j_d)\in\N^d$
\[
\nn {f_n-f}_{N,j} := \sup_{z \in \R^d}(1 + | z |)^N \left|
D^{j}\left({f_n}-f\right)(z)\right|\rightarrow 0,\text{ as }n\to\infty,
\] 
where $D^jf(z)=\frac{\partial^{j_1}\cdots\partial^{j_d}}{\partial
z_1^{j_1}\cdots \partial z_d^{j_d}}f(z)$ denotes the partial
derivative of order $j$.

We will actually also consider the space
\[
\cS_1(\R^d):=\left\{f \in \cS(\R^d);\; \int_{\R^d}f(z)dz=0\right\}.
\]
Note that $\cS_1(\R^d)=\mbox{span}\left\{ D^{j}f ;\; f \in \cS(\R^d),j\in \{0,1\}^d, j_1+\cdots+j_d=1\right\}$.
For convenience, we also write $\cS_0(\R^d)=\cS(\R^d)$ and thus we will be able to use $\cS_n(\R^d)$ for $n\in\{0,1\}$ in the sequel.
We denote by $\cS_n'(\R^d)$ the topological dual of $\cS_n(\R^d)$ and by $(\,\cdot,\cdot\,)$ the duality bracket. We usually consider two distinct topologies on $\cS_n'(\R^d)$.
The strong topology is induced by the family of semi-norms
\[
 q_B(\cdot)=\sup_{f\in B}|(\,\cdot\,,f)|,\quad B \mbox{ bounded in }\cS_n(\R^d).
\]
The weak topology on $\cS_n'(\R^d)$ is the topology induced by the family of semi-norms $|(\,\cdot\,,f)|$,  $f \in \cS_n(\R^d)$.
A first remark is that both topologies generate the same Borel $\sigma$-field denoted by $\calB(\calS_n'(\Rd))$, see \cite{LT}.

A generalized random field is an $\cS_n'(\R^d)$-valued random variable, that is a measurable mapping $X$ from a probability space $({\Omega},{{\mathcal A}},\P)$ to $(\cS_n'(\R^d),\calB(\calS_n'(\Rd)))$. 
For such a generalized random field $X$, we let its evaluation at $f\in \calS_n(\R^d)$ be denoted by $X(f)$, which is a real random variable on the same probability space.

The law of a generalized random field $X$ is uniquely determined by its characteristic functional 
\[
\cL_{X}(f) := \int_{\Omega}e^{i X(f)}\,d\proba, \quad f\in\calS_n(\Rd).
\]
Further, $X$ induces a family of random variables $X(f)$ on $(\Omega,{\mathcal A})$ indexed by $f\in \calS_n(\Rd)$, with characteristic functions given by
$$
{\mathbb{E}}\left(e^{it X(f)}\right)=\int_{\Omega}e^{itX(f)}d{\mathbb{P}}=\cL_{X}(tf),\quad t\in\R.
$$
By linearity, the finite-dimensional distributions of $X$ are simply obtained
with
\[
\cL_{X}(a_1f_1+\cdots+a_kf_k)={\mathbb{E}}\left(e^{i[a_1{X}(f_1)+\cdots+a_k{X}(f_k)]}\right),
\]
for all $k\ge 1$, $a_1,\ldots,a_k\in\R$ and $f_1,\ldots,f_k\in \cS_n(\R^d)$.

In practice, however, given a family of real random variables $\{X(f)\}_{f\in\calS_n(\Rd)}$ on a probability space $(\Omega,\calA,\proba)$ satisfying
\equh\label{eq:linear}
X(af+bg) = aX(f)+bX(g) \mbox{ a.s. } \mfa a,b\in\R, f,g\in\cS_n(\Rd),
\eque
a priori it is not clear whether a corresponding $\calS_n'(\Rd)$-valued random variable exists. When this can be achieved, namely if there exists an 
$\cS_n'(\R^d)$-valued random variable $\tilde{X}$, possibly defined on another probability space $(\tilde{\Omega},\tilde{\mathcal A},\tilde{\mathbb{P}})$, such that
 for all $k\ge 1$, $f_1,\ldots,f_k\in \cS_n(\R^d)$,
$A_1,\ldots,A_k \in {\mathcal B}(\R)$,
\[
\mathbb{P}(X(f_1)\in A_1,\ldots,X(f_k)\in A_k)=\tilde{\mathbb{P}}\pp{\tilde{X}(f_1)\in A_1,\ldots,\tilde{X}(f_k)\in A_k},
\]
we say that $\tilde X$ is a version of $X=\{X(f)\}_{f\in \cS_n(\R^d)}$
\citep[Definition 9.1.1]{samorodnitsky94stable}. 
Let us quote that this notion is weaker than the notion of regularization in \citep{ito83distribution}. Actually, a regularization $\tilde{X}$ of $X$ should be defined on the same probability space $(\Omega, {\mathcal A},\mathbb{P})$ than $X$ and satisfies $\tilde{X}(f)=X(f)$ a.s. for all $f\in \cS_n(\R^d)$. 
However, when we deal with convergence in law for most of the part of the paper, the notion of version is enough for our purpose: once the existence of a version is proved, it suffices to work with the characteristic functionals of the original individual random variables. At only a few occasions we shall establish results in the stronger notion of regularization.

We recall below two fundamental theorems when working with limit theorems of generalized random fields, both based on characteristic functionals. 
The following theorem is a direct consequence of Minlos--Bochner's theorem, see \cite[Corollary 2.2]{LT}.
\begin{Thm}\label{thm:version}
Let $X=\{X(f)\}_{f\in \cS_n(\R^d)}$ be a collection of real random variables on $(\Omega,{\mathcal A},\mathbb{P})$ satisfying \eqref{eq:linear}.
If ${\mathcal L}_X: \cS_n(\R^d) \rightarrow \C$ is continuous then $X$ admits a version that is an
$\cS_n'(\Rd)$-valued random variable.
\end{Thm}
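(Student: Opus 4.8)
The plan is to deduce Theorem~\ref{thm:version} from the Minlos--Bochner theorem, which characterizes when a positive-definite, normalized, continuous functional on a nuclear space is the characteristic functional of a Radon probability measure on the topological dual. First I would form the candidate functional $\cL_X\colon\cS_n(\R^d)\to\C$ defined by $\cL_X(f)=\E(e^{iX(f)})$ and check the three hypotheses of Minlos--Bochner: (i) $\cL_X(0)=1$, which is immediate; (ii) positive-definiteness, i.e.\ for all $k\ge1$, $f_1,\dots,f_k\in\cS_n(\R^d)$ and $z_1,\dots,z_k\in\C$ one has $\sum_{j,\ell}z_j\bar z_\ell\,\cL_X(f_j-f_\ell)\ge0$; and (iii) continuity, which is precisely the hypothesis assumed in the statement. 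Point (ii) is where the linearity assumption \eqref{eq:linear} enters: because $X(f_j-f_\ell)=X(f_j)-X(f_\ell)$ a.s., one has $\cL_X(f_j-f_\ell)=\E\big(e^{iX(f_j)}\,\overline{e^{iX(f_\ell)}}\big)$, so $\sum_{j,\ell}z_j\bar z_\ell\,\cL_X(f_j-f_\ell)=\E\big|\sum_j z_j e^{iX(f_j)}\big|^2\ge0$. Here one must be slightly careful that the almost-sure identity \eqref{eq:linear} holds, for each fixed finite family, off a single null set, which is automatic since a finite union of null sets is null.

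Next I would invoke Minlos--Bochner on the nuclear space $\cS_n(\R^d)$: since $\cS(\R^d)$ is nuclear and $\cS_1(\R^d)$ is a closed subspace of it (it is the kernel of the continuous linear functional $f\mapsto\int f$), $\cS_n(\R^d)$ is nuclear for $n\in\{0,1\}$, and hence the properties (i)--(iii) guarantee the existence of a unique probability measure $\mu$ on $(\cS_n'(\R^d),\calB(\cS_n'(\R^d)))$ whose characteristic functional is $\cL_X$, that is, $\int_{\cS_n'(\R^d)}e^{i(\omega,f)}\,\mu(d\omega)=\cL_X(f)$ for all $f$. I would then let $(\tilde\Omega,\tilde\calA,\tilde\P)=(\cS_n'(\R^d),\calB(\cS_n'(\R^d)),\mu)$ and define $\tilde X$ to be the identity map, so that $\tilde X(f)\colon\omega\mapsto(\omega,f)$ is a real random variable for each $f$; it is measurable precisely because the Borel $\sigma$-field is generated by the weak topology, as recalled in the excerpt via \cite{LT}.

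Finally I would verify that $\tilde X$ is a version of $X$ in the stated sense, i.e.\ that the two families have the same finite-dimensional distributions. For this it suffices to match characteristic functions of arbitrary linear combinations: for $a_1,\dots,a_k\in\R$ and $f_1,\dots,f_k\in\cS_n(\R^d)$, using linearity on the $X$-side and the defining identity of $\mu$ on the $\tilde X$-side,
\[
\E\big(e^{i(a_1X(f_1)+\cdots+a_kX(f_k))}\big)=\cL_X(a_1f_1+\cdots+a_kf_k)=\int_{\cS_n'(\R^d)}e^{i(\omega,\,a_1f_1+\cdots+a_kf_k)}\,\mu(d\omega)=\tilde\E\big(e^{i(a_1\tilde X(f_1)+\cdots+a_k\tilde X(f_k))}\big),
\]
and since characteristic functions determine laws on $\R^k$, the joint laws of $(X(f_1),\dots,X(f_k))$ and $(\tilde X(f_1),\dots,\tilde X(f_k))$ coincide, which is exactly the definition of a version quoted from \citep[Definition 9.1.1]{samorodnitsky94stable}.

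I expect the only genuine subtlety to be the careful statement and application of the Minlos--Bochner theorem on the possibly-non-standard space $\cS_1(\R^d)$; once one notes that it is a closed subspace of the nuclear space $\cS(\R^d)$ and hence itself nuclear, everything else is bookkeeping. In fact, since the excerpt explicitly states that this theorem ``is a direct consequence of Minlos--Bochner's theorem, see \cite[Corollary 2.2]{LT}'', the cleanest route is simply to cite that corollary after verifying the positive-definiteness and normalization, reducing the proof to a few lines.
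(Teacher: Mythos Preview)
Your argument is correct. The verification of positive-definiteness via linearity, the normalization, and the construction of $\tilde X$ as the identity on the canonical space $(\cS_n'(\R^d),\mu)$ are all sound, and matching finite-dimensional distributions through $\cL_X(a_1f_1+\cdots+a_kf_k)$ is exactly the right closing step.

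The one genuine difference from the paper lies in how the case $n=1$ is handled. You argue that $\cS_1(\R^d)$, as a closed subspace of the nuclear space $\cS(\R^d)$, is itself nuclear, and then apply Minlos--Bochner directly on $\cS_1(\R^d)$. The paper does not do this: it cites \cite{LT} only for $\cS(\R^d)=\cS_0(\R^d)$, and for $\cS_1(\R^d)$ it invokes the Dobrushin projection trick. Namely, fixing $\psi\in\cS(\R^d)\setminus\cS_1(\R^d)$, one defines $\pi(f)=f-c(f)\psi$ with $c(f)=\int f/\int\psi$, so $\pi:\cS(\R^d)\to\cS_1(\R^d)$ is continuous and linear; composing with $\pi$ transports the problem to $\cS(\R^d)$, where the result from \cite{LT} applies directly, and then one restricts back.

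Both routes are valid. Yours is cleaner conceptually but relies on the general nuclear-space form of Minlos' theorem (and the heredity of nuclearity to closed subspaces), which the reader must accept or look up in \cite{fernique67processus}. The paper's route is more pedestrian but has the advantage of using only the black-box result on $\cS(\R^d)$ that is spelled out in \cite{LT}, avoiding any appeal to abstract nuclear-space machinery; this is presumably why the authors chose it.
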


Recall that a sequence of generalized random fields $\{X_m\}_{m\ge1}$ converges in distribution to $X$, denoted by $X_m\weakto X$, in $\cS'_n(\Rd)$ given the strong topology if  for all $\varphi:\cS_n'(\R^d)\to\R$ continuous for the strong topology and bounded,
$$
\int_{\cS_n'(\R^d)}\varphi(u)d\mathbb{P}_{X_m}(u)\underset{m\rightarrow
\infty}{\longrightarrow} \int_{\cS_n'(\R^d)}\varphi(u)d\mathbb{P}_{X}(u).
$$
Similarly, $X_m\weakto X$ in $\cS'_n(\Rd)$ given the weak topology, if the above holds for all $\varphi:\cS_n'(\Rd)\to\R$ that is bounded and continuous with respect to the weak topology.
 As a consequence of L\'evy's continuity theorem (\cite[Theorem 2.3]{LT}), we can state the following result, see \cite[Corollary 2.4]{LT}.
\begin{Thm}\label{thm:levy}
Let $\{X_m\}_{m\ge1}$, $X$ be 
$\cS_n'(\Rd)$-valued random variables. The following conditions are equivalent:
\begin{itemize}
\item $X_m\weakto X$ in $\cS_n'(\R^d)$ given the strong topology,
\item $X_m\weakto X$ in $\cS_n'(\R^d)$ given the weak topology,
\item $\cL_{X_m}(f)\to \cL_X(f)$ for all $f\in\cS_n(\R^d)$.
\end{itemize}
\end{Thm}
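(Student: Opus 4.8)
The plan is to derive the three equivalences from the nuclear-space version of L\'evy's continuity theorem \cite[Theorem 2.3]{LT} recalled above, together with the Fr\'echet--Montel property of $\cS_n(\Rd)$. Two of the needed implications come for free. Since the strong topology on $\cS_n'(\Rd)$ is finer than the weak one, every bounded $\varphi\colon\cS_n'(\Rd)\to\R$ continuous for the weak topology is also continuous for the strong topology; hence $X_m\weakto X$ for the strong topology implies $X_m\weakto X$ for the weak topology. Conversely, for each fixed $f\in\cS_n(\Rd)$ the maps $u\mapsto\cos(u,f)$ and $u\mapsto\sin(u,f)$ are bounded and, by the very definition of the weak topology, continuous on $\cS_n'(\Rd)$; hence $X_m\weakto X$ for the weak topology forces $\cL_{X_m}(f)=\E[\cos(X_m,f)]+i\,\E[\sin(X_m,f)]\to\cL_X(f)$.

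The substantive step is that $\cL_{X_m}(f)\to\cL_X(f)$ for every $f$ implies $X_m\weakto X$ for the weak topology. First I would check that $\cL_X$ is continuous on $\cS_n(\Rd)$: if $f_k\to f$ in $\cS_n(\Rd)$, then $(X,f_k)\to(X,f)$ pointwise on $\Omega$ because $X$ takes its values in $\cS_n'(\Rd)$, so $e^{i(X,f_k)}\to e^{i(X,f)}$ and dominated convergence yields $\cL_X(f_k)\to\cL_X(f)$; as $\cS_n(\Rd)$ is metrizable, sequential continuity is continuity. The hypothesis of \cite[Theorem 2.3]{LT} is then met, and it delivers simultaneously the uniform tightness of $\{\P_{X_m}\}_{m\ge1}$ and the convergence $X_m\weakto X$ for the weak topology.

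It remains to upgrade this to convergence for the strong topology, and this is where I expect the real work to lie. Because $\cS_n(\Rd)$ is Fr\'echet--Montel, the strong and weak topologies on $\cS_n'(\Rd)$ not only generate the same Borel $\sigma$-field but also have exactly the same compact subsets and induce the same relative topology on each of them: on a strongly compact $K$ the identity onto $K$ equipped with the weak topology is a continuous bijection from a compact space onto a Hausdorff one, hence a homeomorphism. In particular any Borel probability measure on $\cS_n'(\Rd)$ is tight for strongly compact sets. Given a bounded strongly continuous $\varphi$ and $\epsilon>0$, I would pick a strongly compact $K$ with $\sup_{m}\P(X_m\notin K)<\epsilon$ (using the uniform tightness above) and $\P(X\notin K)<\epsilon$, extend $\varphi|_K$ --- which is continuous for the weak topology on $K$ --- to a weakly continuous $\wt\varphi$ on $\cS_n'(\Rd)$ with $\|\wt\varphi\|_\infty=\|\varphi\|_\infty$ by applying Tietze's theorem inside a compactification, and then estimate, for $Y\in\{X_m,X\}$, $|\E\varphi(Y)-\E\wt\varphi(Y)|\le 2\|\varphi\|_\infty\,\P(Y\notin K)$ since $\varphi=\wt\varphi$ on $K$; combining this with $\E\wt\varphi(X_m)\to\E\wt\varphi(X)$ gives $\limsup_m|\E\varphi(X_m)-\E\varphi(X)|\le 4\|\varphi\|_\infty\,\epsilon$, and letting $\epsilon\downarrow0$ finishes the argument. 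Collecting the four implications establishes the stated equivalence; the bookkeeping of the Montel and tightness facts in this last paragraph is exactly what is packaged in \cite[Corollary 2.4]{LT}.
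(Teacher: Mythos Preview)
Your argument is correct and more explicit than the paper's, but it takes a different route in handling the case $n=1$. The paper's proof is essentially a citation: for $n=0$ it invokes \cite[Corollary~2.4]{LT} directly, and for $n=1$ it does \emph{not} re-examine the nuclear or Montel structure of $\cS_1(\Rd)$ at all. Instead it fixes $\psi\in\cS(\Rd)\setminus\cS_1(\Rd)$, defines the continuous projection $\pi(f)=f-\bigl(\int f\big/\int\psi\bigr)\psi$ from $\cS(\Rd)$ onto $\cS_1(\Rd)$, and observes that every $\cS_1'(\Rd)$-valued random variable $X$ lifts to an $\cS'(\Rd)$-valued $Y$ via $Y(f)=X(\pi(f))$; the $n=1$ statement then reduces to the $n=0$ one applied to $Y$. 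Your approach, by contrast, treats both values of $n$ uniformly by working with the abstract properties of $\cS_n(\Rd)$ (nuclear, Fr\'echet, Montel --- all inherited by $\cS_1$ as a closed hyperplane of $\cS$) and effectively reproves the content of \cite[Corollary~2.4]{LT} in that generality, including the Tietze-inside-a-compactification step to upgrade weak to strong convergence. One small caveat: you cite \cite[Theorem~2.3]{LT} as a ``nuclear-space version'', but the paper indicates that \cite{LT} is written specifically for $\cS'(\Rd)$; for $n=1$ you should either remark that the proof there carries over verbatim to $\cS_1'(\Rd)$ (since $\cS_1$ shares all the relevant structural properties), or cite Fernique's general nuclear-space result \cite{fernique67processus} instead. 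The Dobrushin reduction buys the paper brevity and reuses the same trick needed for Theorem~\ref{thm:version}; your approach buys a self-contained explanation of \emph{why} the three conditions are equivalent.
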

Since both notions of convergence are equivalent, we shall just write $X_m\weakto X$ in $\cS_n'(\R^d)$ in the sequel.
\begin{proof}[Proofs of Theorems \ref{thm:version} and \ref{thm:levy}]
We refer to \cite{fernique67processus} for the stated results in the more general framework in terms of nuclear spaces. For the special case $\cS'(\Rd) \equiv \cS'_0(\Rd)$, we refer  to \cite{LT} where self-contained and simplified proofs can be found. Results in \cite{LT} can then be extended for $\cS_1'(\R^d)$ by the following idea
from \citet[Proposition 2.1]{dobrushin79gaussian}. Let us quote that fixing a function $\psi\in\cS(\R^d)\setminus\cS_1(\R^d)$, one can define 
 the continuous map $U: \calS_1'(\Rd)\rightarrow \cS'(\Rd)$ by $U(L)(f)= L(\pi(f))$, where for $f \in \calS(\Rd)$, $$\pi(f)=f-c(f)\psi \in \calS_1(\Rd),$$  with $c(f)= \int_{\R^d}f(x)dx  /  \int_{\R^d}\psi(x)dx$. Hence any $\calS_1'(\Rd)$-valued random variable $X$ coincides with the restriction of an $\cS'(\Rd)$-valued random variable $Y$, defined by $Y(f)= X(\pi(f))$, $f \in \calS(\Rd)$. By using the so-defined map $U$ and applying results on $\cS'(\Rd)$, the desired results for $\cS'_1(\Rd)$ follow.
\end{proof}
\subsection{A generalized random ball model}
Now we define the random ball model on $\Rd$. 
Throughout, the operator-scaling is associated to a $d\times d$ real matrix $E$, of which all eigenvalues have strictly positive real parts, denoted by $a_1\geq \cdots\geq a_d>0$.
Let $q = {\rm tr}(E)>0$ be the trace of the matrix $E$.

We  consider the kernel operator defined for $(x,r)\in\R^d\times (0,\infty)$ and $f\in \cS(\R^d)$, by 
\begin{equation}\label{kernel}
T_r^Ef(x):=\int_{\R^d}K_r^E(x,y)f(y)dy \quad\text{ with } K_r^E(x,y):=\mathbf{1}_{B_E(x,r)}(y).
\end{equation}
Here and throughout, $B_E(x,r)$ is the shifted and scaled ``ball'' given by 
\[
B_E(x,r) = x+r^EB,\quad x\in\Rd,\,r>0,
\]
based on a fixed bounded measurable set $B\subset\Rd$ with $0\in B$, $v_B :=\Leb_d(B)\in(0,\infty)$ and $\Leb_d(\partial B) = 0$, where $\Leb_d$ is the Lebesgue measure on $\Rd$. Thus $v_r:= \Leb_d(B_E(x,r)) = r^qv_B$. Note that we keep the name ``random ball'' from the original model but here the set $B$ can be a much more general set than a ball.
We only assume that $B$ is a set of finite perimeter in the sense that
\begin{equation}\label{eq:finitePerimeter}
 {\rm Per}(B):=\sup\left\{\int_B {\rm div}\varphi(x)\, dx \;:\; \varphi\in\calC^1_c(\R^d,\R^d),\, \|\varphi\|_\infty \le 1  \right\} <\infty,
\end{equation}
where $\calC^1_c(\R^d,\R^d)$ is the set of continuously differentiable functions with compact support (e.g.\ $B$ can be any bounded convex set).
According to \cite[Theorem~14]{galerne2011}, \eqref{eq:finitePerimeter} is equivalent to the fact that the covariogram $g_B:\R^d\ni x\mapsto\Leb_d(B\cap (x+B))$ of the set $B$ is Lipschitz, and thus there exists $C>0$ such that
\begin{equation}\label{eq:symdiff}
\Leb_d(B\Delta(x+B)) = 2(g_B(0)-g_B(x))\le C|x|, \;\text{ for all }x\in\R^d.
\end{equation}

\medskip

We first define the model as a collection of random variables indexed by $f\in\calS(\Rd)$, and then prove the existence of regularizations
 afterwards. 
The rescaled random ball field is defined as
\equh\label{eq:XE1}
X^E_\rho(f):=\int_{\Rd\times\R_+\times\R}mT_r^Ef(x){\mathcal N}_\rho( dx, dr,d m), \quad f\in\calS(\Rd),
\eque
where  $\calN_{\rho}$ is a Poisson random measure on $\Rd\times\R_+\times\R$ with intensity  $\lambda(\rho) dx F(dr/\rho)G(d m)$. 
Intuitively, the origins of random balls are distributed as a homogeneous Poisson process with intensity $\lambda(\rho)$, and each random ball is scaled with a random radius with distribution $F_\rho(dr):= F(dr/\rho)$, and is associated with a random weight $m$ with distribution $G$. Positions, scalings and weights are assumed to be independent. There are a few natural assumptions on $F$ and $G$. 
First, 
the expected volume of a random ball is assumed to be finite. That is,
\equh\label{eq:volume}
v_B\int_{\R_+}r^qF( dr)<\infty.
\eque
Moreover, 
we assume that, for some $C_\beta>0$,
\equh\label{eq:f}
F( dr) = p(r) dr \qmwith p(r)\sim C_\beta r^{-1-\beta} \quad\mmas r\to 0^{q-\beta},
\eque
with the convention, $0^\delta = 0$ if $\delta>0$ and $0^\delta = \infty$ if $\delta<0$. This condition is introduced in a compact form for both zoom-in/out scalings to be explained in Section~\ref{sec:3regimes}. It reads as $p(r)$ is regularly varying at $0$ with index $-1-\beta$, only when $\beta<q$; otherwise~\eqref{eq:volume} will be violated. Similarly, 
$p(r)$ is regularly varying at infinity with index $-1-\beta$ when $\beta>q$. 
Next, for the random weights, their distribution $G$ is assumed to be integrable and in the domain of attraction of certain stable distribution $S_\alpha(\sigma,b,0)$ with $\alpha\in(1,2]$, $\sigma>0$, $b\in[-1,1]$. That is, for independent  random variables $M_i$ with common distribution $G$, 
\equh\label{eq:G}
\frac{M_1+\cdots+M_n}{n^{1/\alpha}}\weakto S_\alpha(\sigma,b,0) \quad\text{ with } \alpha\in(1,2].
\eque

A standard reference for stable distributions and processes is \citep{samorodnitsky94stable}.
Under \eqref{eq:volume} and \eqref{eq:G} with $\alpha>1$, the random field \eqref{eq:XE1} is 
well-defined
and integrable. This follows from the fact
\begin{align*}
\mathbb{E}\pp{|X_\rho^E(f)|}
&\le\int_{\Rd\times\R_+\times\R}|m|T_r^E|f|(x)\lambda(\rho) dx F(dr/\rho)G(d m)\\
&\le\lambda(\rho)\rho^q\mathbb{E}(|M|)v_B \|f\|_{_{L^1}}\int_{\R^+}r^qF(dr),
\end{align*}
where $M$ is a real random variable of distribution $G$ and  $\|f\|_{_{L^1}}:=\int_{\R^d}|f(y)|dy$. 
Hence,  a centered rescaled random ball field can be defined by 
 \[
 Y^E_\rho(f):=X^E_\rho(f)-\mathbb{E}\pp{X^E_\rho(f)},\quad f\in\calS(\Rd).
 \]
We come to the generalized random field interpretation of  $X^E_\rho$ and $Y^E_\rho$.

\begin{Prop} Under assumption \eqref{eq:volume}, $X^E_\rho$ and $Y^E_\rho$ are almost surely elements of $\cS'(\R^d)$ and therefore of $\cS_1'(\R^d)$. As a consequence, they admit regularizations in $\cS'(\R^d)$ and therefore in $\cS_1'(\R^d)$.
\end{Prop}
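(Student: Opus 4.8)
The plan is to argue \emph{pathwise}. A realisation of $\calN_\rho$ is a countable point configuration $\{(x_i,r_i,m_i)\}_i$ in $\Rd\times\R_+\times\R$, and on it $X^E_\rho(f)=\sum_i m_iT^E_{r_i}f(x_i)$ with $T^E_rf(x)=\int_{x+r^EB}f(y)\,dy$. I want a \emph{single} almost-sure event on which $f\mapsto X^E_\rho(f)$ is well defined, linear, and continuous on $\cS(\Rd)$; on such an event the functional is then a tempered distribution, and its measurability as an $\cS'(\Rd)$-valued map follows because (as noted before Theorem~\ref{thm:version}) $\calB(\cS'(\Rd))$ is generated by the evaluations $L\mapsto L(f)$, while each $\omega\mapsto X^E_\rho(f)(\omega)$ is already a random variable.

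The key quantitative step is to fix an integer $N>d$ and introduce the nonnegative random variable
\[
\Phi_N:=\int_{\Rd\times\R_+\times\R}|m|\Bigl(\int_{x+r^EB}(1+|y|)^{-N}\,dy\Bigr)\,\calN_\rho(dx,dr,dm).
\]
By Campbell's formula (applicable since the integrand is nonnegative, so Tonelli's theorem is available), translating the inner integral and using $\Leb_d(r^EB)=r^qv_B$,
\[
\E(\Phi_N)=\lambda(\rho)\,\E(|M|)\,v_B\,\rho^q\int_{\R_+}r^q\,F(dr)\int_{\Rd}(1+|y|)^{-N}\,dy<\infty,
\]
the finiteness coming from \eqref{eq:volume} together with $N>d$ (here $M\sim G$, which is integrable). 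Hence there is an event $\Omega_0$ with $\P(\Omega_0)=1$ on which $\Phi_N<\infty$. On $\Omega_0$, for every $f\in\cS(\Rd)$ the pointwise bound $|T^E_rf(x)|\le \nn f_{N,0}\int_{x+r^EB}(1+|y|)^{-N}\,dy$ shows that the series for $X^E_\rho(f)$ converges absolutely (so it agrees with the integral in \eqref{eq:XE1}), that $f\mapsto X^E_\rho(f)$ is linear, and that $|X^E_\rho(f)|\le\Phi_N\,\nn f_{N,0}$; being dominated by a single seminorm, this functional is continuous, so $X^E_\rho\in\cS'(\Rd)$ on $\Omega_0$. Setting $X^E_\rho$ equal to $0$ off $\Omega_0$ yields an $\cS'(\Rd)$-valued random variable on $(\Omega,\calA,\P)$ that coincides $\P$-a.s.\ with $X^E_\rho(f)$ for each $f$, i.e.\ a regularization; restricting to $\cS_1(\Rd)\subset\cS(\Rd)$ with its subspace topology gives the corresponding statements in $\cS_1'(\Rd)$.

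For $Y^E_\rho$, observe that $f\mapsto\E(X^E_\rho(f))$ is a deterministic linear functional satisfying $|\E(X^E_\rho(f))|\le C\|f\|_{L^1}$ (the estimate displayed just before the proposition), and $\|f\|_{L^1}\le\bigl(\int_{\Rd}(1+|y|)^{-N}\,dy\bigr)\nn f_{N,0}$ for $N>d$; thus this functional lies in $\cS'(\Rd)$, and therefore $Y^E_\rho=X^E_\rho-\E(X^E_\rho)$ is a.s.\ an element of $\cS'(\Rd)$ (hence of $\cS_1'(\Rd)$) and inherits a regularization from that of $X^E_\rho$. The one point that genuinely needs care is producing the single null set and the uniform seminorm bound $\Phi_N\,\nn f_{N,0}$ valid for \emph{all} test functions simultaneously, since the pointwise estimate $\E|X^E_\rho(f)|<\infty$ only furnishes an $f$-dependent null set; the remaining ingredients — Campbell's formula, the Fubini computation, and the measurability of $\cS'(\Rd)$-valued maps — are routine.
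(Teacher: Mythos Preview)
Your proof is correct and follows essentially the same route as the paper's: your random constant $\Phi_N$ is exactly the paper's $\calC_{\rho,k}^E$, and the seminorm bound $|X^E_\rho(f)|\le\Phi_N\,\nn f_{N,0}$ together with the finite-expectation computation via Campbell/Fubini matches the paper line for line. The paper is slightly terser about measurability and the regularization step, while you spell these out, but the argument is the same.
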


\begin{proof}
Let us quote that $f\mapsto T_r^Ef(x) \in \cS'(\R^d)$, and moreover for all $k\ge 0$,
$$|T_r^Ef(x)|\le \left(\int_{B_E(x,r)}(1+|y|)^{-k} dy\right)\sup_{z\in\R^d}(1+|z|)^k |f(z)|.$$
It follows that,
$$|X^E_\rho(f)|\le \calC_{\rho,k}^E\sup_{z\in\R^d}(1+|z|)^k |f(z)|,$$
with
$$\calC_{\rho,k}^E:=
\int_{\R^d\times\R_+\times\R}|m|\int_{B_E(x,r)}(1+|y|)^{-k} dy{\mathcal N}_\rho(d x,d r,d m).$$
Note that
\begin{align*}
\mathbb{E}\left(\calC_{\rho,k}^E\right)
&=\lambda(\rho)\int_{\R^d\times\R_+\times\R}|m|\int_{B_E(x,r)}(1+|y|)^{-k} dy d xF_\rho(d r)G(d m)\\
&=\lambda(\rho)\rho^q\mathbb{E}(|M|)v_B\int_{\R^+}r^qF(dr)\left(\int_{\R^d}(1+|y|)^{-k} dy\right),
\end{align*}
which is finite under assumption \eqref{eq:volume} as soon as $k>d$.
Hence, $\calC_{\rho,k}^E<\infty$ a.s.~for $k>d$, so that $X^E_\rho\in \cS'(\R^d)$ a.s. Since we also have $f\mapsto \mathbb{E}(X^E_\rho(f))\in \cS'(\R^d)$ by taking expectation in the previous computations,
it follows that the centered field $Y^E_\rho$ is also in $\cS'(\R^d)$ a.s. 
The last part of the proposition is easy since to obtain a regularization in $\cS'(\R^d)$ of a process $X$ which is almost surely element of $\cS'(\R^d)$, it suffices to modify it by setting $X(\omega)\equiv 0$ for the $\omega\in\Omega$ such that $X(\omega)\notin \cS'(\R^d)$, see \cite[p.40]{fernique67processus}.  
\end{proof}
The limit theorems will be based on the characteristic functionals of the centered rescaled random fields
\begin{equation}\label{PsiY}
\cL_{Y_\rho^E}(f)=
\esp\exp\pp{iY_\rho^E (f)} = \exp\pp{\int_{\Rd\times\R_+}\phi_G(T_r^Ef(x))\lambda(\rho)dxF_\rho(dr)},\quad f\in\calS_n(\Rd),
\end{equation}
with
\equh\label{eq:PsiG}
\phi_G(t):= \int (e^{imt} -1 -imt) G(dm)=\cL_M(t)-1-it\esp(M),\quad t\in\R,
\eque
where $M$ is a real random variable of distribution $G$ satisfying \eqref{eq:G}.
\subsection{Zoom-in/out scalings and four regimes}\label{sec:3regimes}

There are two scalings to be considered in the limit theorems. Recall $F_\rho(dr) = F(dr/\rho)$. The case $\rho\to \infty$ corresponds to enlarging the size of each ball, and $\rho\to0$ corresponds to shrinking the size of each ball. We refer to the two scalings as the zoom-in and zoom-out scalings, respectively.

Next, for each type of scaling, there are four qualitatively different regimes. Since the spatial dependence of the random field is essentially determined by overlaps of random balls, heuristically we compute the expected weight of rescaled balls covering a fixed point $y$, denoted by $m(\rho)$, independent from $y$ by stationarity. It is natural to expect $m(\rho)\to c\in[0,\infty]$, and we distinguish $\infty, (0,\infty)$ and $0$ as three different cases. Take the zoom-in scaling case first. Clearly only small balls, say with radius less than $1$ (before the $\rho$-scaling and the constant $1$ is irrelevant) should matter, and we compute
\begin{align*}
m_{\rm in}(\rho) 
&:= \esp\pp{\int_{\Rd\times\R_+\times\R}\indd{y\in B_E(x,r)}\indd{r\leq 1}\calN_\rho(dx,dr,dm)}\\ 
&= \esp(M)\lambda(\rho)v_B\int_0^{1}r^qF_\rho(dr),
\end{align*}
with $$\lambda(\rho)\int_0^{1}r^qF_\rho(dr)\sim \pp{C_\beta \int_0^1r^{q-\beta-1}dr} \lambda(\rho)\rho^\beta \quad\mmas \rho\to\infty.$$
Similarly for the zoom-out case, we compute for number of balls with radius larger than 1,
\begin{align*}
m_{\rm out}(\rho) 
&:= \esp\pp{\int_{\Rd\times\R_+\times\R}\indd{y\in B_E(x,r)}\indd{r> 1}\calN_\rho(dx,dr,dm)} \\
&= \esp(M)\lambda(\rho)v_B\int_1^\infty r^qF_\rho(dr),
\end{align*}
with $$\lambda(\rho)\int_1^{\infty}r^qF_\rho(dr) \sim \pp{C_\beta \int_1^\infty r^{q-\beta-1}dr}\lambda(\rho)\rho^\beta \quad\mmas \rho\to0.$$
The calculations above made use of~\eqref{eq:f}, and also explain why it is a reasonable assumption. Notice that the constant is 
qualitatively
 irrelevant, only the common term $\lambda(\rho)\rho^\beta$ matters, and both cases of scaling can be summarized in the compact form of $\rho\to 0^{\beta-q}$. 

In summary, there are naturally three regimes of interest, characterized by 
\[
\lambda(\rho)\rho^\beta\to\left\{\begin{array}{lc}
\infty & \mbox{(dense regime)},\\
c\in(0,\infty) & \mbox{(intermediate regime)},\\
0 & \mbox{((very-)sparse regime)},\end{array}\right.
\mmas \rho\to 0^{\beta-q},
\]
where within the case $\lambda(\rho)\rho^\beta\to 0$ we shall further identify two sub-regimes, named as sparse and very-sparse regimes in the sequel.
We shall establish limit theorems for different regimes separately, and in each regime our limit theorem and the proof unify both zoom-in and zoom-out scalings (only zoom-out scaling in the very-sparse regime). Furthermore, in each regime we specify two parameters, $\beta$ on the tails of the radius of random balls, and $n$ indicating the zoom-in ($n=1$) and zoom-out ($n=0$) scalings. 


\section{Scaling limits}\label{sec:limit}
We will treat the four regimes separately. 
In each regime, we first introduce the limit field as stochastic integral, then show the existence of its generalized random field version by Minlos--Bochner's theorem and then prove the weak convergence by L\'evy's continuity theorem.  
For easy reading, all the proofs of this section are postponed to 
Section \ref{sec:proof}.
The limit fields appearing here are further investigated in the next sections.

\subsection{Dense regime}
In the dense regime, 
we consider
\[
\lambda(\rho)\rho^\beta\to\infty \mmas \rho\to 0^{\beta-q}, 
\]
and the admissible range of parameters $\beta$ and $n$ is
\equh\label{eq:betan}
\begin{array}{lll}
\beta \in(q,\alpha q) & n = 0 & \mbox{zoom-out scaling},\\
\beta \in(q-a_d,q) & n = 1& \mbox{zoom-in scaling}.
\end{array}
\eque
The following field appears in the limit.
Let $\alpha\in (1,2]$, $\sigma>0$ and $b\in[-1,1]$ be given by \eqref{eq:G} and $C_\beta>0$ be given by \eqref{eq:f}.  Let $M_{\alpha,\beta}$ be an $\alpha$-stable random measure on $\RdRp$ with control measure 
$\sigma^\alpha C_\beta r^{-1-\beta}d rd x$, and constant skewness function $b$.
For $f\in \cS_n(\R^d)$, let us define the stochastic integral
\begin{equation}\label{Zalpha}
Z_{\alpha,\beta}^E(f) := \int_\RdRp T_r^Ef(x)M_{\alpha,\beta}(d r, d x).
\end{equation}
See \citep{samorodnitsky94stable} for more background on stochastic integrals with respect to $\alpha$-stable random measures. 
\begin{Prop}\label{Prop:Zalpha}
Let $\alpha\in (1,2]$. For $\beta,n$ as in~\eqref{eq:betan}, the process $Z_{\alpha,\beta}^E:=\{Z_{\alpha,\beta}^E(f)\}_{f\in\calS_n(\Rd)}$ in~\eqref{Zalpha} is well-defined, has characteristic functional
\begin{align}
\cL_{Z_{\alpha,\beta}^E}(f)& =\exp\ccbb{-C_\beta\sigma^\alpha\int_{\RdRp}| T_r^Ef(x)|^\alpha
\pp{1-ib\epsilon\pp{T_r^Ef(x)}\tan\frac{\alpha\pi}2}r^{-1-\beta}d r d x},\label{eq:charZalpha}
\end{align}
where $\epsilon(s)=\sign(s)$, and admits a version 
with values in $\cS_n'(\R^d)$.
\end{Prop}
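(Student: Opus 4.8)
The plan is to verify the three assertions of the proposition in order: (i) well-definedness of the stochastic integral $Z_{\alpha,\beta}^E(f)$ for $f\in\cS_n(\Rd)$, (ii) the formula \eqref{eq:charZalpha} for its characteristic functional, and (iii) the existence of an $\cS_n'(\Rd)$-valued version via Theorem~\ref{thm:version}. By the general theory of integrals with respect to $\alpha$-stable random measures (see \citep{samorodnitsky94stable}), $Z_{\alpha,\beta}^E(f)$ is well-defined precisely when the integrand $T_r^Ef$ lies in $L^\alpha(\RdRp, \sigma^\alpha C_\beta r^{-1-\beta}\,dr\,dx)$, i.e.\ when
\[
I_\alpha(f):=\int_{\RdRp}|T_r^Ef(x)|^\alpha r^{-1-\beta}\,dr\,dx<\infty,
\]
and in that case the characteristic functional \eqref{eq:charZalpha} is exactly the standard formula for a stable integral with control measure $\sigma^\alpha C_\beta r^{-1-\beta}dr\,dx$ and constant skewness $b$. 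So the crux of (i) and (ii) is the single estimate $I_\alpha(f)<\infty$, and (iii) then reduces to continuity of $f\mapsto I_\alpha(f)$ on $\cS_n(\Rd)$, since $|\cL_{Z_{\alpha,\beta}^E}(f)|=\exp(-C_\beta\sigma^\alpha I_\alpha(f))$ up to the bounded skewness factor and linearity \eqref{eq:linear} is immediate from linearity of $f\mapsto T_r^Ef(x)$ and of the stable integral.

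The estimate $I_\alpha(f)<\infty$ splits into a small-$r$ and a large-$r$ part, and this is where the admissible ranges in \eqref{eq:betan} and the distinction $n=0$ vs.\ $n=1$ enter. For the large-$r$ regime, $T_r^Ef(x)=\int_{B_E(x,r)}f(y)\,dy$ is controlled by $\|f\|_{L^1}$ (uniformly in $x,r$), but we also need decay in $x$: since $f$ is rapidly decreasing, $|T_r^Ef(x)|\le \int_{x+r^EB}|f(y)|\,dy$ is small unless $x+r^EB$ meets the bulk of the mass of $f$, and integrating $|T_r^Ef(x)|^\alpha$ in $x$ gives a bound like $C r^{q}\|f\|_{L^1}^{\alpha-1}\|f\|_{L^1}$ times a volume factor $r^{q(\alpha-1)}$ — more carefully, $\int|T_r^Ef(x)|^\alpha dx\le (v_r)^{\alpha-1}\int\int \mathbf 1_{B_E(x,r)}(y)|f(y)|^\alpha\,dy\,dx\le v_B^\alpha r^{q\alpha}\|f\|_{L^\alpha}^\alpha$ by Jensen/Hölder, and this must be balanced against $\int^\infty r^{-1-\beta}dr$, giving convergence iff $q\alpha<\beta$, consistent with $\beta<\alpha q$ forcing us to look at small $r$ for $n=0$; the correct split is that for $n=0$ (zoom-out) the integrable part of $F$ is the $r\le 1$ part, and one shows $\int_{\R^d}|T_r^Ef(x)|^\alpha dx\le C v_r^\alpha\|f\|_\infty^{\alpha-1}\|f\|_{L^1}\cdot(\dots)$ contributes $r^{q\alpha}$ near $0$, integrable against $r^{-1-\beta}$ iff $\beta<\alpha q$, while the large-$r$ tail is handled using that $f\in\cS_n$ with, for $n=1$, the cancellation $\int f=0$. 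Indeed for the zoom-in case $n=1$ and small $\beta<q$ one cannot afford the crude bound $|T_r^Ef(x)|\le\|f\|_{L^1}$ uniformly; instead, for large $r$ one writes $T_r^Ef(x)=\int(\mathbf 1_{B_E(x,r)}(y)-\mathbf 1_{B_E(x,r)}(0))f(y)\,dy$ using $\int f=0$, bounds the symmetric-difference volume via \eqref{eq:symdiff} (the finite-perimeter / Lipschitz-covariogram hypothesis), obtaining a gain of a negative power of $r$ coming from $|r^{-E}y|\le C|y|^{1/a_d}$ type estimates, which yields integrability at $r=\infty$ precisely when $\beta>q-a_d$.

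The main obstacle is this delicate two-sided integrability estimate, and within it the large-$r$ analysis in the zoom-in case $n=1$: extracting from the single cancellation $\int f=0$ and the finite-perimeter bound \eqref{eq:symdiff} a quantitative decay of $|T_r^Ef(x)|$ in $r$ that survives integration against $r^{-1-\beta}dr$ for the whole range $\beta\in(q-a_d,q)$. Here one must use the spectral decomposition of $E$: writing $r^{-E}$ in a Jordan basis, the slowest-shrinking direction scales like $r^{-a_d}$ (up to logarithmic corrections from Jordan blocks, which are harmless since they only cost an arbitrarily small power of $r$), so $\Leb_d\big(B_E(x,r)\triangle(x+r^EB)\big)$-type quantities translate into a factor $r^{q-a_d+\varepsilon}$ against the natural $r^q$, and the borderline exponent $q-a_d$ emerges. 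The continuity of $f\mapsto I_\alpha(f)$ needed for Theorem~\ref{thm:version} then follows by redoing the same bounds with the Schwartz seminorms $\nn{f}_{N,j}$ in place of $f$ itself: all the estimates above are majorized by finitely many such seminorms (the $L^1$, $L^\infty$, $L^\alpha$ norms of $f$ and, for $n=1$, the $L^1$ norm of $\nabla f$ via $\cS_1(\Rd)=\mathrm{span}\{D^jf\}$), so $I_\alpha$ is continuous and Theorem~\ref{thm:version} delivers the $\cS_n'(\Rd)$-valued version, completing the proof.
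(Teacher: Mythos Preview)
Your proposal is correct and follows essentially the same route as the paper: reduce well-definedness and the characteristic functional formula to the single integrability estimate $\int_{\R_+}\snn{T_r^Ef}_{L^\alpha}^\alpha r^{-1-\beta}\,dr<\infty$, prove that estimate by a small-$r$/large-$r$ split (using the $L^1$--$L^\alpha$ bounds on $T_r^E$ for $n=0$, and the cancellation $\int f=0$ together with \eqref{eq:symdiff} and the Jordan form of $E$ for $n=1$), and then get the $\cS_n'(\Rd)$-valued version from Theorem~\ref{thm:version} by running the same bounds on $f_k-f$. The paper packages these estimates into a separate proposition (Proposition~\ref{prop:Tr}); one small slip in your sketch is the inequality ``$|r^{-E}y|\le C|y|^{1/a_d}$'', which should read $|r^{-E}y|\le \snn{r^{-E}}\,|y|\le C\,r^{-a_d}(|\log r|\vee 1)^{\ell_d-1}|y|$ for $r>1$ --- the dependence on $y$ is linear and the $r^{-a_d}$ factor (with possible logarithmic correction from nontrivial Jordan blocks) is exactly what produces the threshold $\beta>q-a_d$.
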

Then, we can consider weak convergence in $\cS_n'(\R^d)$ and state the limit theorem in the dense regime.
\begin{Thm}
\label{thm:tightness}
Suppose that the assumptions \eqref{eq:f} and \eqref{eq:G} on $F$ and $G$ hold. Under \eqref{eq:betan},
if $n_1(\rho)\defe\rho^\beta\lambda(\rho)\to\infty$ as $\rho\to 0^{\beta-q}$, then
\[
\frac1{n_1(\rho)^{1/\alpha}}Y_\rho^E \weakto 
 Z_{\alpha,\beta}^E \quad\mmas \rho\to 0^{\beta-q}
\]
in $\cS_n'(\R^d)$. 
\end{Thm}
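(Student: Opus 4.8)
The plan is to invoke L\'evy's continuity theorem. By Theorem~\ref{thm:levy} and Proposition~\ref{Prop:Zalpha} (which provides a version of $Z_{\alpha,\beta}^E$ in $\calS_n'(\Rd)$), it suffices to prove the pointwise convergence of characteristic functionals, $\cL_{n_1(\rho)^{-1/\alpha}Y_\rho^E}(f)\to\cL_{Z_{\alpha,\beta}^E}(f)$, for every fixed $f\in\calS_n(\Rd)$. Write $N=n_1(\rho)=\rho^\beta\lambda(\rho)$. Since $F_\rho(dr)=\rho^{-1}p(r/\rho)\,dr$ by \eqref{eq:f} and $\lambda(\rho)=N\rho^{-\beta}$, formula \eqref{PsiY} together with the identity $\cL_{cX}(f)=\cL_X(cf)$ and the linearity of $T_r^E$ give
\[
\log\cL_{n_1(\rho)^{-1/\alpha}Y_\rho^E}(f)=\int_{\RdRp}\bpp{N\,\phi_G\!\pp{\frac{T_r^Ef(x)}{N^{1/\alpha}}}}\rho^{-1-\beta}p(r/\rho)\,dr\,dx=:\int_{\RdRp}A_\rho(r,x)\,w_\rho(r)\,dr\,dx.
\]
The goal is to pass to the limit under the integral sign and recognize, by \eqref{eq:charZalpha}, that the limit equals $\int_{\RdRp}C_\beta\,\psi_\alpha\!\pp{T_r^Ef(x)}r^{-1-\beta}\,dr\,dx=\log\cL_{Z_{\alpha,\beta}^E}(f)$, where $\psi_\alpha(u):=-\sigma^\alpha|u|^\alpha\bpp{1-ib\,\sign(u)\tan\frac{\alpha\pi}2}$ is positively homogeneous of degree $\alpha$.

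For the pointwise convergence of the integrand I would use two facts. By \eqref{eq:f}, $w_\rho(r)=\rho^{-1-\beta}p(r/\rho)\to C_\beta r^{-1-\beta}$ for each fixed $r>0$ as $\rho\to0^{\beta-q}$, since in that limit $r/\rho\to0^{q-\beta}$, the endpoint where \eqref{eq:f} is stated. By \eqref{eq:G}, rephrased in terms of characteristic functions, $\phi_G(u)=\psi_\alpha(u)+o(|u|^\alpha)$ as $u\to0$; since $N\to\infty$ and $\psi_\alpha$ is $\alpha$-homogeneous, this yields $A_\rho(r,x)=N\phi_G(T_r^Ef(x)/N^{1/\alpha})\to\psi_\alpha(T_r^Ef(x))$. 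Multiplying, $A_\rho w_\rho\to C_\beta\,\psi_\alpha(T_r^Ef(x))\,r^{-1-\beta}$ pointwise on $\RdRp$.

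The core of the argument is justifying this interchange by dominated convergence. Elementary bounds for $\phi_G(u)=\int(e^{imu}-1-imu)\,G(dm)$, obtained by splitting at $|m|\le1/|u|$ and using that \eqref{eq:G} forces $\proba(|M|>x)\le Cx^{-\alpha}$ and $\esp|M|<\infty$, give $|\phi_G(u)|\le C|u|^\alpha$ for $|u|\le1$ and $|\phi_G(u)|\le C(1+|u|)$ always; distinguishing the cases $|T_r^Ef(x)|\le N^{1/\alpha}$ and $|T_r^Ef(x)|>N^{1/\alpha}$ then gives the $\rho$-uniform bound $|A_\rho(r,x)|\le C|T_r^Ef(x)|^\alpha$. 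To control $w_\rho$, one splits the $r$-integral over $\R_+$ into a macroscopic part, on which $r/\rho$ has entered the range where \eqref{eq:f} provides a uniform Potter-type bound $w_\rho(r)\le C r^{-1-\beta}$, and a residual part. On the macroscopic part the integrand is dominated by $C|T_r^Ef(x)|^\alpha r^{-1-\beta}$, which is integrable over $\RdRp$ by Proposition~\ref{Prop:Zalpha}, so dominated convergence gives the desired limit there. On the residual part one shows the contribution tends to $0$: for the zoom-out scaling ($n=0$, $\beta\in(q,\alpha q)$, $\rho\to0$) it consists of small radii, where Young's convolution inequality gives $\|T_r^Ef\|_{L^\alpha}^\alpha\le(r^qv_B\|f\|_{L^\alpha})^\alpha$ and, after the substitution $r=\rho u$, the contribution is $O(\rho^{\alpha q-\beta})\to0$ precisely because $\beta<\alpha q$; for the zoom-in scaling ($n=1$, $\beta\in(q-a_d,q)$, $\rho\to\infty$) it consists of large radii, where one uses the decay estimate $\|T_r^Ef\|_{L^\alpha}^\alpha\le C_\varepsilon r^{q-a_d+\varepsilon}$ valid for $f\in\calS_1$ (by interpolating a perimeter bound with the $L^2$--Plancherel estimate, exploiting $\int_{\Rd}f=0$) together with \eqref{eq:volume} to control the tail of $p$, making the contribution $O(\rho^{q-a_d-\beta+\varepsilon})\to0$ since $\beta>q-a_d$. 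Combining the macroscopic limit with the vanishing of the residual and applying Theorem~\ref{thm:levy} completes the proof.

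The step I expect to be the main obstacle is precisely this last one: producing a single $\rho$-independent dominating function on all of $\RdRp$. The difficulty is genuinely two-sided, since away from the endpoint controlled by \eqref{eq:f} the weight $w_\rho$ is no longer comparable to $r^{-1-\beta}$, so one must simultaneously handle $T_r^Ef(x)$ for very small balls --- where the volume bound is what forces $\beta<\alpha q$ --- and, when $f\in\calS_1$, for very large balls --- where the cancellation $\int_{\Rd}f=0$ is what forces $\beta>q-a_d$. This is exactly what dictates the admissible parameter ranges in \eqref{eq:betan}.
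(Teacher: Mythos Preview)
Your proposal is correct and follows essentially the same route as the paper: reduce to pointwise convergence of characteristic functionals via Theorem~\ref{thm:levy}, use Lemma~\ref{lem:G} for the local behavior of $\phi_G$, and control $\int_{\R_+}\|T_r^Ef\|_{L^\alpha}^\alpha r^{-1-\beta}\,dr$ through the two-sided bounds of Proposition~\ref{prop:Tr}. The only organizational difference is that the paper does not perform your macroscopic/residual split explicitly but instead invokes the ready-made Lemma~\ref{lem:g} (applied to $g_\rho(r)=n_1(\rho)\int_{\Rd}\phi_G(n_1(\rho)^{-1/\alpha}T_r^Ef(x))\,dx$), whose hypotheses \eqref{cond:boundg}--\eqref{cond:boundgrho} are verified directly from Proposition~\ref{prop:Tr}; your splitting is precisely what underlies the proof of that lemma, so the two arguments are equivalent in content.
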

\begin{Rem}
We let $\{Z_{\alpha,\beta}^E(f)\}_{f\in \cS_n(\Rd)}$ denote the stochastic process indexed by $f$ via \eqref{Zalpha}, and the same notation $Z_{\alpha,\beta}^E$ in Theorem \ref{thm:tightness} for the corresponding version taking values in $\cS_n'(\Rd)$. Similar notations are used for the other regimes.
\end{Rem}
\subsection{Intermediate regime}
In the intermediate regime, we consider
\equh\label{eq:intermediate}
\lambda(\rho)\rho^\beta \to a^{q-\beta}\mmas\rho \to 0^{\beta-q} \qmwith a\in(0,\infty).
\eque
The admissible range of parameters $\beta$ and $n$ is the same \eqref{eq:betan} as in the dense regime.
In this case,  the limit field is represented by a Poisson integral.
For $a\in(0,\infty)$ and $f\in \cS(\R^d)$, we first define
\equh\label{eq:Tra}
 T_{r,a}^E f (x):=\int_{\R^d} \ind_{a^{-E}B_E(x,r)}(y) f(y) dy=T_{r/a}^E f (a^{-E}x)
\eque
and we consider the Poisson integral $J_{a,\alpha,\beta}^E$ defined, for $f\in\S_n(\R^d)$, by
\equh\label{eq:Ja}
 J_{a,\alpha,\beta}^E(f):=\int_{\R^d\times\R_+\times\R_+}m T_{r,a}^E f(x) \tilde{\mathcal{N}}_\beta(dr,dx,dm),
\eque
where $\tilde{\mathcal N}_\beta$ is the compensated Poisson random measure on $\Rd\times\R_+\times\R_+$ with intensity $C_\beta r^{-1-\beta}dxdrG(dm)$, with $C_\beta>0$
given in \eqref{eq:f}.
For more background on Poisson integrals, see for example \citep{kallenberg97foundations}.
\begin{Prop}\label{prop:Ja}
Let $a\in(0,\infty)$. For $\beta,n$ as in~\eqref{eq:betan}, the process
$ J_{a,\alpha,\beta}^E$ in~\eqref{eq:Ja} is well-defined on $\S_n(\R^d)$, has characteristic functional
\begin{equation}\label{characJa}
\cL_{J_{a,\alpha,\beta}^E}(f) = \exp\ccbb{\int_{\Rd\times\R_+}\phi_G(T_{r,a}^E f(x))C_\beta r^{-1-\beta} dr dx},
\end{equation}
where $\phi_G$ is defined by ~\eqref{eq:PsiG} and admits a version 
 with values in $\S_n'(\R^d)$.
\end{Prop}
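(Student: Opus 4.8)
The plan is to run the standard three-step argument for such compensated-Poisson limit fields. First, one shows that for every $f\in\S_n(\Rd)$ the stochastic integral $J_{a,\alpha,\beta}^E(f)$ in~\eqref{eq:Ja} is well defined and that its characteristic functional is~\eqref{characJa}. Second, one checks that $f\mapsto J_{a,\alpha,\beta}^E(f)$ is almost surely linear and that $\cL_{J_{a,\alpha,\beta}^E}\colon\S_n(\Rd)\to\C$ is continuous. Third, one applies Theorem~\ref{thm:version} to produce the $\S_n'(\Rd)$-valued version. Essentially all of the work sits in the first step, and within it in a single integrability estimate; the argument mirrors, with the L\'evy symbol $\phi_G$ playing the role of $|t|^\alpha(1-ib\,\epsilon(t)\tan\frac{\alpha\pi}2)$, the proof of Proposition~\ref{Prop:Zalpha}.

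For the first step I would use that, for a Poisson random measure $N$ with intensity $\nu$ and a measurable integrand $g$, the compensated integral $\int g\,d\tilde N$ is well defined with characteristic function $\exp\bigl(\int(e^{itg}-1-itg)\,d\nu\bigr)$ as soon as $\int(|g|^2\wedge|g|)\,d\nu<\infty$. Here $g(r,x,m)=m\,T_{r,a}^E f(x)$ and $\nu(dr,dx,dm)=C_\beta r^{-1-\beta}\,dr\,dx\,G(dm)$. Using $s^2\wedge|s|\le|s|^\gamma$ for $\gamma\in[1,2]$ and integrating out $m$ --- recall that $G$, being integrable and in the domain of attraction~\eqref{eq:G} of an $\alpha$-stable law, has $\E|M|^\gamma<\infty$ for all $\gamma\in[1,\alpha)$ --- one bounds $\int(|g|^2\wedge|g|)\,d\nu$ by $\E|M|^\gamma\int_{\Rd\times\R_+}|T_{r,a}^E f(x)|^\gamma\,C_\beta r^{-1-\beta}\,dr\,dx$, and the same formula yields the characteristic functional $\exp\bigl(\int_{\Rd\times\R_+}\phi_G(T_{r,a}^E f(x))\,C_\beta r^{-1-\beta}\,dr\,dx\bigr)$, i.e.~\eqref{characJa}, with $\phi_G$ as in~\eqref{eq:PsiG}. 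Since $T_{r,a}^E f(x)=T_{r/a}^E f(a^{-E}x)$ by~\eqref{eq:Tra}, a linear change of variables removes the dependence on $a$, so everything reduces to showing, for a suitable $\gamma\in[1,\alpha)$ depending on $\beta,q$ and $\alpha$,
\[
\int_{\Rd\times\R_+}|T_r^E f(x)|^\gamma\, r^{-1-\beta}\,dr\,dx<\infty,\qquad f\in\S_n(\Rd).
\]

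This estimate I would prove by splitting at $r=1$. For $r\le1$ the set $r^EB$ stays in a fixed ball, so $|T_r^E f(x)|\le v_r\sup_{y\in B_E(x,r)}|f(y)|= r^q v_B\sup_{y\in B_E(x,r)}|f(y)|$, with the last supremum rapidly decreasing in $|x|$ uniformly over $r\le1$; hence the $r\le1$ part is at most $C_f\int_0^1 r^{q\gamma-1-\beta}\,dr$, finite as long as $q\gamma>\beta$, which is feasible for $n=0$ because $\beta<\alpha q$ (take $\gamma\in(\beta/q,\alpha)$) and trivial for $n=1$ with $\gamma=1$ since $\beta<q$. For $r\ge1$ and $n=0$, one writes $T_r^E f=\ind_{-r^EB}*f$ and applies Young's inequality to get $\|T_r^E f\|_{L^\gamma}^\gamma\le\Leb_d(r^EB)\|f\|_{L^1}^\gamma=r^q v_B\|f\|_{L^1}^\gamma$, so the $r\ge1$ part is at most $C\int_1^\infty r^{q-1-\beta}\,dr<\infty$ because $\beta>q$. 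For $r\ge1$ and $n=1$ one has to use the cancellation $\int f=0$: writing $f=\sum_i\partial_i g_i$ with $g_i\in\S(\Rd)$, the Gauss--Green formula for the finite-perimeter set $B_E(x,r)$ gives $T_r^E(\partial_i g_i)(x)=\int_{\partial^*B_E(x,r)}g_i\,\nu_i\,d\calH^{d-1}$; Fubini in $x$ then yields $\|T_r^E(\partial_i g_i)\|_{L^1}\le{\rm Per}(r^EB)\,\|g_i\|_{L^1}$, the Lipschitz-covariogram estimate~\eqref{eq:symdiff} gives ${\rm Per}(r^EB)\le Cr^q\|r^{-E}\|$, and $\|r^{-E}\|\le C_\varepsilon r^{-a_d+\varepsilon}$ for $r\ge1$, whence $\|T_r^E f\|_{L^1}\le C_\varepsilon r^{q-a_d+\varepsilon}$ and the $r\ge1$ part is $\le C_\varepsilon\int_1^\infty r^{q-a_d+\varepsilon-1-\beta}\,dr<\infty$ because $\beta>q-a_d$ and $\varepsilon$ is free (here $\gamma=1$ suffices). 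This is exactly where the finite-perimeter hypothesis on $B$ and the range $\beta\in(q-a_d,q)$ from~\eqref{eq:betan} get consumed, and making the exponent $q-a_d+\varepsilon$ fit inside the admissible range is the most delicate point of the whole proof.

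For the second step, linearity~\eqref{eq:linear} is inherited from linearity of $f\mapsto T_{r,a}^E f$ and of the Poisson integral, and continuity of $\cL_{J_{a,\alpha,\beta}^E}$ reduces, via its exponential form, to continuity of $f\mapsto\int_{\Rd\times\R_+}\phi_G(T_{r,a}^E f(x))\,C_\beta r^{-1-\beta}\,dr\,dx$ on $\S_n(\Rd)$; this follows from $T_{r,a}^E f_m(x)\to T_{r,a}^E f(x)$, continuity of $\phi_G$, and dominated convergence, the dominating function being supplied by the estimate above applied uniformly over a bounded subset of $\S_n(\Rd)$ (all constants there depend only on finitely many Schwartz semi-norms of $f$). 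Theorem~\ref{thm:version} then delivers the $\S_n'(\Rd)$-valued version, completing the proof.
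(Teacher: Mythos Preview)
Your proposal is correct and follows the paper's overall architecture: reduce well-definedness of the compensated Poisson integral to $\int_{\R_+}\|T_r^E f\|_{L^\gamma}^\gamma r^{-1-\beta}\,dr<\infty$ for some $\gamma\in[1,\alpha)$ via the inequality $|s|\wedge|s|^2\le|s|^\gamma$ and $\E|M|^\gamma<\infty$, then establish continuity of $\cL_{J_{a,\alpha,\beta}^E}$ and invoke Theorem~\ref{thm:version}. Two technical choices differ and are worth flagging.

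First, for the key integrability estimate the paper simply cites Proposition~\ref{prop:Tr}, whose large-$r$, $n=1$ bound~\eqref{majoS1} is obtained directly from the covariogram Lipschitz estimate~\eqref{eq:symdiff} (yielding the sharp power $r^{q-a_d}$ up to a logarithm). You instead decompose $f=\sum_i\partial_i g_i$, apply Gauss--Green on the finite-perimeter set $B_E(x,r)$, and bound ${\rm Per}(r^E B)\le C r^q\|r^{-E}\|$; this is a legitimate alternative and actually makes the role of the finite-perimeter assumption more transparent, at the cost of a slightly weaker exponent $r^{q-a_d+\varepsilon}$. Second, for continuity the paper does not argue by dominated convergence on the exponent of $\cL$, but instead proves $J_{a,\alpha,\beta}^E(f_k)\to0$ in $L^\gamma$ via an Esseen-type moment bound $\E|J_{a,\alpha,\beta}^E(f)|^\gamma\le C\bigl(\int_{\R_+}\|T_{r,a}^E f\|_{L^\alpha}^\alpha r^{-1-\beta}\,dr\bigr)^{\gamma/\alpha}$. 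Your DCT route is lighter in machinery, but note that your $r\ge1$ estimates are $L^\gamma$-norm bounds rather than pointwise ones, so the domination argument must be phrased carefully (e.g.\ iterate DCT, first in $x$ for fixed $r$ using the uniform Schwartz decay of $\{f_m\}$, then in $r$ using the uniform bound on $\|T_r^E f_m\|_{L^\gamma}^\gamma$); as written, ``the dominating function being supplied by the estimate above'' slightly understates this.
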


The limit theorem in the intermediate regime is the following.
\begin{Thm}
\label{thm:tightness2}
Suppose that the assumptions \eqref{eq:f} and \eqref{eq:G} on $F$ and $G$ hold. 
Under \eqref{eq:betan} and \eqref{eq:intermediate},
\[
Y_\rho^E\weakto 
J_{a,\alpha,\beta}^E\quad\mmas \rho\to 0^{\beta-q}
\]
in $\S_n'(\R^d)$.
\end{Thm}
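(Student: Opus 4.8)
The plan is to prove $Y_\rho^E \weakto J_{a,\alpha,\beta}^E$ by applying L\'evy's continuity theorem (Theorem~\ref{thm:levy}): since both $Y_\rho^E$ and $J_{a,\alpha,\beta}^E$ admit versions in $\cS_n'(\R^d)$ (the former by the Proposition on regularizations, the latter by Proposition~\ref{prop:Ja}), it suffices to show the pointwise convergence of characteristic functionals $\cL_{Y_\rho^E}(f)\to\cL_{J_{a,\alpha,\beta}^E}(f)$ for every fixed $f\in\cS_n(\R^d)$. By~\eqref{PsiY} and~\eqref{characJa} this amounts to proving
\[
\int_{\Rd\times\R_+}\phi_G(T_r^Ef(x))\,\lambda(\rho)\,dx\,F_\rho(dr)\ \longrightarrow\ \int_{\Rd\times\R_+}\phi_G(T_{r,a}^E f(x))\,C_\beta r^{-1-\beta}\,dr\,dx \qquad\mbox{as }\rho\to 0^{\beta-q}.
\]

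First I would perform the natural change of variables in the left-hand integral designed to match the scaling defining $T_{r,a}^E$. Writing $F_\rho(dr)=F(dr/\rho)$ and substituting $r = \rho s$ (so that $T_r^E f(x) = T_{\rho s}^E f(x)$), together with the spatial substitution $x = \rho^E u$ (using $dx = \rho^q\,du$ and $T_{\rho s}^E f(\rho^E u) = T_{s/a}^E f(a^{-E}\rho^E u)$-type identities coming from $T_r^E f(c^E x)$ relations), one rewrites the left-hand side as an integral against $\lambda(\rho)\rho^q F(ds)$ of a quantity involving $\phi_G(T_{s,a_\rho}^E f(u))$ where $a_\rho$ is a scaling that converges to $a$ under~\eqref{eq:intermediate}. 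Here I would use that under~\eqref{eq:intermediate} $\lambda(\rho)\rho^\beta\to a^{q-\beta}$; combined with assumption~\eqref{eq:f} that $p(r)\sim C_\beta r^{-1-\beta}$ at the relevant endpoint ($0$ when zooming in, $\infty$ when zooming out), the measure $\lambda(\rho)\rho^q F_\rho(dr)$ restricted to the appropriate range of $r$ should converge vaguely to $C_\beta r^{-1-\beta}\,dr$ after the above substitution — this is exactly the heuristic computation $m_{\rm in}(\rho)$, $m_{\rm out}(\rm \rho)$ carried out in Section~\ref{sec:3regimes}. So the convergence of the integrals reduces to a dominated-convergence / vague-convergence argument for the radius measure.

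The remaining analytic work is to justify passing to the limit inside the spatial integral and to control the tails in $r$. For the integrand bound I would use that $|\phi_G(t)| \le C(|t|\wedge|t|^2)$ or more precisely $|\phi_G(t)| \le C|t|^{\alpha'}$ for a suitable $\alpha'$ determined by~\eqref{eq:G}, together with the estimates on $T_r^E f(x)$ (it is bounded by $v_r\|f\|_\infty = r^q v_B\|f\|_\infty$ and also by $\Leb_d(B_E(x,r))$ times a rapidly decreasing bound in $x$, giving integrability in $x$ as in the proof of the Proposition on regularizations). The key to controlling the $r\to 0$ behaviour (for zoom-in) is the finite-perimeter hypothesis~\eqref{eq:finitePerimeter}–\eqref{eq:symdiff}: for $f\in\cS_1(\R^d)$, since $\int f = 0$, one has $T_r^E f(x) = \int (\ind_{B_E(x,r)}(y) - \ind_{B_E(x_0,r)}(y))\cdots$-type cancellation, and using~\eqref{eq:symdiff} one gets an extra gain of a power of $r$ (controlled by the smallest eigenvalue-related exponent $a_d$, hence the constraint $\beta\in(q-a_d,q)$ when $n=1$), which makes $\phi_G(T_r^E f(x))$ integrable against $r^{-1-\beta}\,dr$ near $0$; for zoom-out ($n=0$, $\beta\in(q,\alpha q)$) the large-$r$ tail is instead controlled by the bound $|T_r^E f(x)|\le r^q v_B\|f\|_\infty$ combined with $|\phi_G(t)|\le C|t|^\alpha$, giving integrability of $r^{q\alpha}r^{-1-\beta}$ at infinity exactly when $\beta<\alpha q$. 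With these uniform-in-$\rho$ dominating functions in hand, dominated convergence yields the convergence of the characteristic functionals, and Theorem~\ref{thm:levy} concludes.

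The main obstacle I anticipate is the bookkeeping of the change of variables together with obtaining a single $\rho$-independent dominating function valid on the whole range of $r$: one must simultaneously handle the small-$r$ regime (where cancellation from $\int f = 0$ and~\eqref{eq:symdiff} is essential and where the admissible $\beta$-window $(q-a_d,q)$ is sharp) and the large-$r$ regime (where only the volume bound is available, giving the window $(q,\alpha q)$), while the convergence $\lambda(\rho)\rho^\beta\to a^{q-\beta}$ only controls the bulk. I would organize this by splitting the $r$-integral into $r\le 1$ and $r>1$, treating the ``matched'' range with vague convergence of the intensity measure plus uniform integrability, and bounding the ``unmatched'' range (which, by~\eqref{eq:f}, contributes a vanishing or uniformly small amount because $\lambda(\rho)\rho^\beta$ stays bounded and the corresponding radius integral is finite by the admissibility of $\beta$). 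This is technical but should follow the same pattern as the proof of Theorem~\ref{thm:tightness}, with the simplification that no extra normalization $n_1(\rho)^{1/\alpha}$ is needed here since $\lambda(\rho)\rho^\beta$ already converges to a finite constant.
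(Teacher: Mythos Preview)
Your overall strategy---reduce to pointwise convergence of characteristic functionals via Theorem~\ref{thm:levy}, then control the $r$-integral with the $T_r^E$ estimates that separate the small-$r$ and large-$r$ behaviour according to~\eqref{eq:betan}---is the right one, and the bounds you identify (volume bound $|T_r^Ef(x)|\le v_r\|f\|_\infty$ combined with $|\phi_G|\le C|\cdot|^\alpha$ for large $r$; the cancellation from $\int f=0$ and~\eqref{eq:symdiff} for small $r$) are exactly the ones used, packaged in Proposition~\ref{prop:Tr}.

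However, the change of variables you propose on the $Y_\rho^E$ side does not work as you describe. If you substitute $r=\rho s$ and $x=\rho^E u$, a direct computation gives $T_{\rho s}^E f(\rho^E u)=T_{s,1/\rho}^E f(u)$ (in the notation~\eqref{eq:Tra}), so the parameter you call $a_\rho$ is $1/\rho$, which does \emph{not} converge to $a$ as $\rho\to 0^{\beta-q}$. In fact after this substitution the integrand tends to $0$ pointwise while the outer factor $\lambda(\rho)\rho^q$ diverges (zoom-out) or vanishes (zoom-in), so you are in a genuine $0\cdot\infty$ situation where dominated convergence alone cannot conclude; the whole limit lives in the interaction between the measure $F_\rho$ and the integrand, and separating them by this substitution destroys that.

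The paper's route is much shorter and avoids this trap: leave the $Y_\rho^E$ side untouched, and instead change variables on the \emph{target} side. Using $y=a^{-E}x$, $s=r/a$ in~\eqref{characJa} gives
\[
\cL_{J_{a,\alpha,\beta}^E}(f)=\exp\ccbb{C_\beta\, a^{q-\beta}\int_{\Rd\times\R_+}\phi_G(T_s^E f(y))\,s^{-1-\beta}\,ds\,dy},
\]
so both sides are now integrals of the \emph{same} fixed function
\[
g(r):=\int_{\Rd}\phi_G(T_r^E f(x))\,dx
\]
against different $r$-measures. One then applies Lemma~\ref{lem:g} with $g_\rho\equiv g$ (independent of $\rho$), so condition~\eqref{cond:limg} is vacuous and~\eqref{cond:boundg}$=$\eqref{cond:boundgrho} reduces to the single bound $|g(r)|\le C(r^{\beta_-}\wedge r^{\beta_+})$, which is precisely what Proposition~\ref{prop:Tr} (via $|\phi_G|\le C|\cdot|^\alpha$) already gave in the proof of Theorem~\ref{thm:tightness}. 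Lemma~\ref{lem:g} then yields
\[
\lambda(\rho)\int_{\R_+}g(r)\,F_\rho(dr)\sim \lambda(\rho)\rho^\beta\cdot C_\beta\int_{\R_+}g(r)\,r^{-1-\beta}\,dr\longrightarrow a^{q-\beta}C_\beta\int_{\R_+}g(r)\,r^{-1-\beta}\,dr,
\]
which is exactly the exponent of the rewritten $\cL_{J_{a,\alpha,\beta}^E}(f)$. Your proposed ``split $r\le1$/$r>1$ plus vague convergence plus uniform integrability'' is essentially a by-hand reproof of Lemma~\ref{lem:g}; once you realise no change of variables on the source side is needed, the whole argument collapses to a one-line reference to that lemma.
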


\subsection{Sparse regime}
The sparse regime correspond to
\equh\label{eq:sparse}
\lambda(\rho)\rho^\beta\to 0 \mmas \rho \to 0^{\beta-q} \qmwith \lambda(\rho)\to 0^{q-\beta}.  
\eque
The admissible range of parameters of $\beta$ and $n$ is
\equh\label{eq:betan'}
\begin{array}{lll}
\beta \in(q,\alpha q) & n = 0 & \mbox{zoom-out scaling},\\
\beta \in(q^2/(q+a_d),q) & n = 1& \mbox{zoom-in scaling}.
\end{array}
\eque
Set $\gamma=\beta/q \in (q/(q+a_d),1)\cup(1,\alpha)$.
Let $M\topp1_\gamma$ be a $\gamma$-stable
 random measure having control measure 
 $\sigma_{1,\gamma}\, dx$
  with
\[
\sigma_{1,\gamma} := v_B\pp{ C_\beta q^{-1}\int_{\R_+}(1-\cos(r))r^{-1-\gamma} dr\int_\R|m|^\gamma G(d m)}^{1/\gamma},
\]
and constant skewness function
\[
b_\gamma := -\frac{\int_\R\epsilon(m)|m|^\gamma G(d m)}{\int_\R|m|^\gamma G(d m)}.
\]
We define, for $f\in\S(\R^d)$, 
\[
Z\topp1_\gamma(f) := \int_\Rd f(x)M\topp1_\gamma( dx).
\]
Note that $Z\topp1_\gamma(f)$ is well-defined since $f\in\S(\R^d)\subset L^\gamma(\Rd)$ and its characteristic functional is given by
\begin{equation}\label{Zgammacharac}
 \calL_{Z\topp1_\gamma}(f)=\exp\pp{-\sigma_{1,\gamma}^\gamma
  \int_{\R^d} |\phi(f(x))|^\gamma  \pp{1- ib_\gamma\epsilon(f(x))\tan\frac{\gamma \pi}{2}}dx }.
\end{equation}

\begin{Prop}\label{prop:Zgamma}
For $\alpha \in (1,2]$ and $\gamma\in (q/(q+a_d),1)\cup(1,\alpha)$, the process $Z\topp1_\gamma$ admits a version with values in $\S'_0(\R^d)\subset\S'_1(\R^d)$.
\end{Prop}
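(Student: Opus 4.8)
The plan is to apply Theorem~\ref{thm:version} (the Minlos--Bochner consequence) to the linear functional $f\mapsto Z^{(1)}_\gamma(f)$ on $\cS_0(\R^d)$. Two things must be checked: first, that $\{Z^{(1)}_\gamma(f)\}_{f\in\cS(\R^d)}$ is almost surely linear in $f$ in the sense of \eqref{eq:linear}; second, that its characteristic functional $\cL_{Z^{(1)}_\gamma}$ is continuous on $\cS(\R^d)$ for the Schwartz topology. The linearity is immediate from the linearity of the stochastic integral $f\mapsto\int_{\R^d}f(x)M^{(1)}_\gamma(dx)$ with respect to a $\gamma$-stable random measure, which is standard (see \citet{samorodnitsky94stable}) and valid since $\cS(\R^d)\subset L^\gamma(\R^d)$ for every $\gamma>0$ — this inclusion also guarantees the integral is well-defined, as already noted in the excerpt. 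So the crux is the continuity of $\cL_{Z^{(1)}_\gamma}$.

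For the continuity, I would start from the explicit formula \eqref{Zgammacharac}, namely
\[
\cL_{Z^{(1)}_\gamma}(f)=\exp\pp{-\sigma_{1,\gamma}^\gamma\int_{\R^d}|f(x)|^\gamma\pp{1-ib_\gamma\epsilon(f(x))\tan\tfrac{\gamma\pi}{2}}dx}
\]
(note there is a small typo in the excerpt writing $\phi(f(x))$ instead of $f(x)$). Since $|1-ib_\gamma\epsilon(f(x))\tan\tfrac{\gamma\pi}{2}|$ is bounded by a constant depending only on $\gamma,b_\gamma$, and $z\mapsto e^z$ is continuous, it suffices to show that $f\mapsto\int_{\R^d}|f(x)|^\gamma\,dx$ and $f\mapsto\int_{\R^d}|f(x)|^\gamma\epsilon(f(x))\,dx$ are continuous on $\cS(\R^d)$, i.e.\ that $f\mapsto\|f\|_{L^\gamma}^\gamma$ and the signed analogue are sequentially continuous for the Schwartz topology. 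If $f_n\to f$ in $\cS(\R^d)$ then in particular $\sup_x(1+|x|)^N|f_n(x)-f(x)|\to0$ for $N>d/\min(1,\gamma)$ (and for $f$ itself one has a uniform bound $|f_n(x)|\le C(1+|x|)^{-N}$ along the sequence), so $|f_n(x)|^\gamma\to|f(x)|^\gamma$ pointwise and is dominated by an integrable function; dominated convergence then gives $\|f_n\|_{L^\gamma}^\gamma\to\|f\|_{L^\gamma}^\gamma$. The same domination handles the signed integral (where I would note $|f|^\gamma\epsilon(f)$ is a continuous function of the real value $f(x)$, so pointwise convergence again holds). Hence $\cL_{Z^{(1)}_\gamma}$ is continuous, Theorem~\ref{thm:version} applies, and we obtain a version with values in $\cS_0'(\R^d)$.

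Finally, the inclusion $\cS_0'(\R^d)\subset\cS_1'(\R^d)$ in the statement is simply the restriction of functionals from $\cS(\R^d)=\cS_0(\R^d)$ to the subspace $\cS_1(\R^d)$, so a version with values in $\cS_0'(\R^d)$ automatically furnishes one with values in $\cS_1'(\R^d)$ by restriction; this requires no extra argument beyond the remark made in the excerpt. I do not expect any serious obstacle here: the only point requiring a little care is justifying the domination in the dominated-convergence argument uniformly along the sequence $f_n$, for which one uses that convergence in $\cS(\R^d)$ entails boundedness in each seminorm $\nn{\cdot}_{N,0}$, giving the single integrable majorant $C(1+|x|)^{-\gamma N}$ with $\gamma N>d$.
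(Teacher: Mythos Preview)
Your proposal is correct and takes essentially the same approach as the paper: apply Theorem~\ref{thm:version} and verify continuity of the characteristic functional via the fact that Schwartz convergence implies $L^\gamma$-convergence. The paper's proof is simply terser --- it reduces to continuity at $0$ (checking $Z^{(1)}_\gamma(f_k)\Rightarrow 0$ when $f_k\to0$ in $\cS(\R^d)$) and invokes the continuous embedding $\cS(\R^d)\hookrightarrow L^\gamma(\R^d)$ without unpacking the dominated-convergence argument you spell out.
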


\begin{Thm}\label{thm:3}
Suppose that the assumptions \eqref{eq:f} and \eqref{eq:G} on $F$ and $G$ hold.
 Under \eqref{eq:sparse} and \eqref{eq:betan'}
with $n_2(\rho):= (\lambda(\rho)^{1/\beta}\rho)^q$ and $\gamma = \beta/q$, we have
\[
\frac1{n_2(\rho)}{Y_\rho^E}\weakto Z_\gamma\topp1 \quad\mmas \rho\to 0^{\beta-q},
\]
in $\S_n'(\R^d)$.
\end{Thm}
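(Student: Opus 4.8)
The plan is to apply L\'evy's continuity theorem (Theorem \ref{thm:levy}): since we already know from Proposition \ref{prop:Zgamma} that $Z_\gamma\topp1$ admits a version in $\S'_n(\R^d)$, it suffices to show that for each fixed $f\in\S_n(\R^d)$,
\[
\cL_{n_2(\rho)^{-1}Y_\rho^E}(f) = \cL_{Y_\rho^E}\pp{f/n_2(\rho)}\longrightarrow \cL_{Z_\gamma\topp1}(f)\qquad\text{as }\rho\to 0^{\beta-q}.
\]
Starting from the exact formula \eqref{PsiY}, the left-hand side equals
\[
\exp\pp{\int_{\Rd\times\R_+}\phi_G\pp{\tfrac1{n_2(\rho)}T_r^Ef(x)}\lambda(\rho)\,dx\,F_\rho(dr)}.
\]
First I would perform the change of variables $x\mapsto \lambda(\rho)^{-1/\beta}\, x$ (equivalently $x\mapsto n_2(\rho)^{1/q}\rho^{-1}x$ up to the fixed factor $v_B^{1/q}$-type constants) and $r\mapsto \rho r$, using $F_\rho(dr)=F(dr/\rho)$ and \eqref{eq:f}, so that the integral becomes an integral against $\lambda(\rho)\rho^\beta\lambda(\rho)^{-q/\beta}\cdot(\text{density}\to C_\beta r^{-1-\beta})\,dr\,dx$; the prefactor is exactly $n_2(\rho)^{-\beta}\lambda(\rho)\rho^\beta$, which by the sparsity hypothesis \eqref{eq:sparse} and the definition $n_2(\rho)=(\lambda(\rho)^{1/\beta}\rho)^q$ is asymptotically a constant (indeed $n_2(\rho)^{-\beta}\lambda(\rho)\rho^\beta = (\lambda(\rho)^{1/\beta}\rho)^{-q\beta/q}\lambda(\rho)\rho^\beta$, and with $\gamma=\beta/q$ one checks this is $\equiv 1$). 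The key point of the sparse regime is then that the argument of $\phi_G$, namely $n_2(\rho)^{-1}T_{r}^E f(\lambda(\rho)^{-1/\beta}x)$, tends to zero pointwise as $\rho\to 0^{\beta-q}$, because $n_2(\rho)\to 0$ while $T_r^Ef$ stays bounded — so we must expand $\phi_G$ near $0$.

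The heart of the argument is the expansion $\phi_G(t)=\cL_M(t)-1-it\,\esp(M)$ for small $t$, combined with the domain-of-attraction hypothesis \eqref{eq:G}: writing the remaining scaling more carefully, the term $T_{r}^E f(\cdot)$ is a convolution-type kernel that, after rescaling both $x$ and $r$, converges (in the relevant integrated sense) to $v_r^{1/\text{(something)}}f(x)$ — more precisely one uses that $T_r^Ef(x)=\int_{B_E(x,r)}f(y)\,dy\approx v_r f(x) = r^q v_B f(x)$ when $r$ is small, so that after the $r\mapsto\rho r$ rescaling the factor $r^q$ is absorbed into the radial part. The $\gamma$-stability emerges exactly here: by the standard Tauberian/domain-of-attraction computation (as in the one-dimensional random-ball literature, e.g.\ \citep{kaj07scaling}), the rescaled integral
\[
\int_{\R_+}\int_\R \phi_G\pp{c\, r^q\, m\, f(x)}\,C_\beta r^{-1-\beta}\,dr\,G(dm)\longrightarrow -\text{(const)}\,\abs{f(x)}^\gamma\pp{1-ib_\gamma\epsilon(f(x))\tan\tfrac{\gamma\pi}2}
\]
as $c\to 0$, where $\gamma=\beta/q$, and the constant is precisely $\sigma_{1,\gamma}^\gamma$ after collecting $v_B$, $C_\beta q^{-1}\int_0^\infty(1-\cos r)r^{-1-\gamma}dr$, and $\int|m|^\gamma G(dm)$; the skewness $b_\gamma$ comes from the odd part of $\phi_G$, i.e.\ from $\int\epsilon(m)|m|^\gamma G(dm)$, exactly as in the definition preceding the theorem. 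Integrating in $x$ then yields $\log\cL_{Z_\gamma\topp1}(f)$ as in \eqref{Zgammacharac}.

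The main obstacle will be making the pointwise limit of the inner integral rigorous and then justifying the passage to the limit under the outer $dx$-integral. Two technical points require care. First, the substitution $T_r^Ef(x)\approx r^q v_B f(x)$ is only valid for small $r$, so one must split the $r$-integral at $r=1$ (say) and show the large-$r$ contribution is negligible — here the different admissible ranges of $\beta$ in \eqref{eq:betan'} for $n=0$ (zoom-out, large balls matter, $\beta\in(q,\alpha q)$) versus $n=1$ (zoom-in, small balls matter, $\beta\in(q^2/(q+a_d),q)$) dictate which tail of \eqref{eq:f} is used; for $n=1$ one additionally exploits $f\in\S_1(\R^d)$ (zero integral) to gain decay of $T_r^Ef(x)$ for large $r$, via the Lipschitz covariogram bound \eqref{eq:symdiff} on the symmetric-difference volume, which is exactly why the finite-perimeter hypothesis \eqref{eq:finitePerimeter} enters. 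Second, to swap limit and $dx$-integral I would produce a dominating function using the elementary bounds $|\phi_G(t)|\le C(|t|\wedge|t|^2)$ (valid since $G$ is integrable and in the domain of attraction of an $\alpha$-stable law with $\alpha>1$, so $\esp|M|<\infty$; the quadratic bound near $0$ needs $\esp(M^2)<\infty$ when $\alpha=2$, and otherwise the regularly-varying truncation argument gives $|\phi_G(t)|\le C|t|^{\gamma'}$ for suitable $\gamma'\in(1,\alpha)$), together with the rapid decay of $f\in\S(\R^d)$ to control the $x$-integral at infinity and the kernel estimate $|T_r^Ef(x)|\le v_r\|f\|_\infty$ near the origin of the $r$-variable. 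Once dominated convergence applies, the convergence of characteristic functionals follows, Theorem \ref{thm:levy} upgrades it to weak convergence in $\S_n'(\R^d)$, and the proof is complete.
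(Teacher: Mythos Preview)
Your overall strategy is the right one and matches the paper: reduce to convergence of characteristic functionals via Theorem~\ref{thm:levy}, rescale the radius variable so as to apply Lemma~\ref{lem:g}, and identify the limiting integrand via the Lebesgue-differentiation approximation $T_s^Ef(x)\approx s^q v_B f(x)$ for small $s$. The identification of the $\gamma$-stable limit and of the constants $\sigma_{1,\gamma}$, $b_\gamma$ is also correctly anticipated. However, two genuine technical ingredients are missing, and without them the argument does not close.

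\medskip

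\textbf{The maximal function.} Your domination plan relies on ``rapid decay of $f$'' together with $|T_r^Ef(x)|\le v_r\|f\|_\infty$. The second bound is constant in $x$ and hence not integrable over $\Rd$, so it cannot serve as a dominant for the $dx$-integral. After the correct rescaling (the paper changes only the $r$-variable, setting $r\mapsto n_2(\rho)^{1/q}r$, not $x$), the issue is to bound
\[
\abs{\frac{T_{n_2(\rho)^{1/q}r}^Ef(x)}{n_2(\rho)}}
\]
uniformly in $\rho$ by something in $L^\alpha(\Rd,dx)$. Rapid decay of $f$ alone does not give this: the ball $B_E(x,n_2(\rho)^{1/q}r)$ moves with $x$ and its volume scales exactly like $n_2(\rho)r^q$, so naive bounds either lose integrability in $x$ or blow up as $n_2(\rho)\to 0$. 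The paper introduces the anisotropic Hardy--Littlewood maximal function
\[
f^*(x)=\sup_{s>0}\frac1{s^qv_B}\int_{B_E(x,s)}|f(y)|\,dy,
\]
proves $f^*\in L^\alpha(\Rd)$ (Lemma~\ref{lem:maxfunct}, via the Stein--Str\"omberg maximal theorem for the quasi-metric balls $C_E(x,r)$), and obtains the $\rho$-uniform pointwise bound $r^qv_Bf^*(x)$. This is what makes both the dominated-convergence step for \eqref{cond:limg} and the bound $|g_\rho(r)|\le C\|f^*\|_{L^\alpha}^\alpha r^{\alpha q}$ in \eqref{cond:boundgrho} work. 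Your sketch has no substitute for this.

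\medskip

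\textbf{The zoom-in case $n=1$.} For $\beta\in(q^2/(q+a_d),q)$ one needs $|g_\rho(r)|\le Cr^{\beta_-}$ with some $\beta_-<\beta<q$, uniformly in $\rho$. Neither the maximal-function bound ($r^{\alpha q}$) nor the $L^1$-bound ($r^q$) suffices here, since both exponents exceed $\beta$. Your proposal to ``split at $r=1$'' and invoke \eqref{eq:symdiff} is far short of what is required. The paper first replaces $\phi_G$ by $\tilde\phi_G(u)=\int(e^{imu}-1)G(dm)$ (legitimate because $f\in\cS_1$ kills the linear term), then splits the $x$-integral according to whether $\tau(x)\le 2\kappa\, n_2(\rho)^{1/q}r$ or not, where $\tau$ is the radial component for the $E$-polar coordinates and $\kappa$ the constant in its quasi-triangle inequality. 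On the far region one exploits rapid decay of $f$ in the $\tau$-scale; on the near region one combines H\"older with the operator estimates \eqref{majo:u} and \eqref{majoS1} and a further split at $n_2(\rho)^{1/q}r=1$, choosing the exponents $\delta=q/(q+b)$, $p=1+b/q$ for $b<a_d$ to land exactly at $|g_\rho(r)|\le Cr^{q^2/(q+b)}$. The covariogram bound \eqref{eq:symdiff} enters only indirectly, through \eqref{majoS1}. This decomposition and exponent bookkeeping are the real content of the $n=1$ case and are absent from your outline.
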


\begin{Rem}
Note that the result in the case $\beta\in(q^2/(q+a_d),q)$ is also new for the isotropic case when $E=I_d$ (the identity matrix). 
\end{Rem}

\subsection{Very-sparse regime}
In this regime, consider 
\equh\label{eq:very_sparse}
\lambda(\rho)\rho^\beta\to 0, \lambda(\rho)\to \infty \mmas \rho\to 0.
\eque
The admissible range of parameters for the very-sparse regime is
\equh\label{eq:betan''}
\begin{array}{lll}
\beta \in (\alpha q, \infty) & n = 0 & \mbox{zoom-out scaling}.
\end{array}
\eque
Let $M\topp2_\alpha$ be a $\alpha$-stable
 random measure having control measure $\sigma_{2,\alpha} dx$ with
\[
 \sigma_{2,\alpha} := \sigma v_B\pp{\int_{\R_+}r^{\alpha q}F( dr)}^{1/\alpha}
\]
and constant skewness function $b$.
For $f\in\S(\R^d)$, we set
\[
Z_\alpha\topp2(f)  := \int_\Rd f(x)M_\alpha\topp2( dx).
\]
\begin{Prop}\label{prop:Zalpha2}
For $\alpha\in(1,2]$, the process $Z_\alpha\topp2$ admits a version with values in $\S'_0(\R^d)$.
\end{Prop}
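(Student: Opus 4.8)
The plan is to prove Proposition~\ref{prop:Zalpha2}, the existence of an $\cS_0'(\R^d)$-valued version of the $\alpha$-stable random field $Z_\alpha\topp2(f) = \int_\Rd f(x)M_\alpha\topp2(dx)$, by checking the hypotheses of Theorem~\ref{thm:version}. That is, I would verify that (i) the family $\{Z_\alpha\topp2(f)\}_{f\in\cS(\R^d)}$ is well-defined and linear in $f$ almost surely in the sense of~\eqref{eq:linear}, and (ii) its characteristic functional $\cL_{Z_\alpha\topp2}$ is continuous on $\cS(\R^d)$. Once these are in place, Theorem~\ref{thm:version} immediately gives the $\cS_0'(\R^d)$-valued version.

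\medskip
\noindent\textbf{Step 1: well-definedness and the characteristic functional.} Since $M_\alpha\topp2$ is an $\alpha$-stable random measure with finite control measure $\sigma_{2,\alpha}dx$ and constant skewness $b$, for any $f\in L^\alpha(\R^d)$ the integral $\int_\Rd f(x)M_\alpha\topp2(dx)$ is well-defined by the standard theory in \citep{samorodnitsky94stable}; as $\cS(\R^d)\subset L^\alpha(\R^d)$ for $\alpha\in(1,2]$, this covers all test functions. The linearity property~\eqref{eq:linear} is the standard linearity of the stable stochastic integral (again \citep{samorodnitsky94stable}). Moreover the characteristic functional is explicitly
\[
\cL_{Z_\alpha\topp2}(f)=\exp\ccbb{-\sigma_{2,\alpha}^\alpha\int_\Rd|f(x)|^\alpha\pp{1-ib\,\epsilon(f(x))\tan\tfrac{\alpha\pi}{2}}dx},
\]
with the convention that $\tan(\alpha\pi/2)$-term vanishes when $\alpha=2$. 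This is the analogue of~\eqref{Zgammacharac}.

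\medskip
\noindent\textbf{Step 2: continuity of $\cL_{Z_\alpha\topp2}$ on $\cS(\R^d)$.} It suffices to show $f\mapsto\int_\Rd|f(x)|^\alpha\,dx$ and $f\mapsto\int_\Rd|f(x)|^\alpha\epsilon(f(x))\,dx$ are continuous on $\cS(\R^d)$, since $\exp$ and the linear combination are continuous. If $f_m\to f$ in $\cS(\R^d)$, then in particular $\snn{f_m-f}_{d+1,0}\to0$, which bounds $|f_m(x)-f(x)|\le \snn{f_m-f}_{d+1,0}(1+|x|)^{-(d+1)}$; this gives an integrable dominating envelope and $L^\alpha$-convergence of $f_m$ to $f$ (using that the $(1+|x|)^{-(d+1)}$ bound is in $L^\alpha$ since $\alpha(d+1)>d$). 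Then $\big|\,|f_m|^\alpha-|f|^\alpha\,\big|\to0$ in $L^1$ by dominated convergence, using $||a|^\alpha-|b|^\alpha|\le \alpha(|a|^{\alpha-1}+|b|^{\alpha-1})|a-b|$ and a uniform envelope. The skewness term is handled similarly, noting $|f|^\alpha\epsilon(f)$ is continuous in $f$ pointwise wherever $f(x)\neq0$ and bounded by the same envelope; the set $\{f(x)=0\}$ has measure zero contribution or the integrand is continuous there too since $|f|^\alpha\epsilon(f)\to0$ as $f\to0$.

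\medskip
\noindent\textbf{Main obstacle.} There is no serious obstacle: this proof is essentially identical in structure to that of Proposition~\ref{prop:Zgamma} for $Z\topp1_\gamma$, with $\gamma$ replaced by $\alpha$ and the control measure $\sigma_{1,\gamma}dx$ replaced by $\sigma_{2,\alpha}dx$ — indeed it is even slightly easier since $\alpha>1$ always here, so the exponent in the envelope estimate is comfortably in the convergent range. The only point requiring mild care is the continuity of the skewness integral $f\mapsto\int|f|^\alpha\epsilon(f)\,dx$ at test functions $f$ with nontrivial zero set, but as noted the integrand $|f|^\alpha\epsilon(f)$ extends continuously by $0$ at zeros of $f$, so dominated convergence applies without difficulty. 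I would therefore present the proof briefly, possibly by reference to the argument for Proposition~\ref{prop:Zgamma}.
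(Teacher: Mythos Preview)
Your proposal is correct and follows essentially the same route as the paper, which simply remarks that the proof proceeds ``as before using Theorem~\ref{thm:version}'' and omits details; indeed, the argument is just that of Proposition~\ref{prop:Zgamma} with $\gamma$ replaced by $\alpha$, the key point being that convergence in $\cS(\R^d)$ implies convergence in $L^\alpha(\R^d)$. Your more explicit treatment of the skewness term is fine but not strictly needed, since by linearity it suffices to show $Z_\alpha\topp2(f_k)\to 0$ in distribution whenever $f_k\to 0$ in $\cS(\R^d)$, and this follows immediately from $\|f_k\|_{L^\alpha}\to 0$.
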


\begin{Thm}\label{thm:4}
Suppose that the assumptions \eqref{eq:f} and \eqref{eq:G} on $F$ and $G$ hold. 
Under \eqref{eq:very_sparse} and \eqref{eq:betan''}, with $n_3(\rho) := \lambda(\rho)^{1/\alpha}\rho^q$, 
\[
\frac1{n_3(\rho)}{Y_\rho^E}\weakto Z_\alpha\topp 2 \quad\mmas \rho\to0
\]
in $\S'_0(\R^d)$.
\end{Thm}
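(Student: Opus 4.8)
The plan is to apply the L\'evy continuity theorem for generalized random fields (Theorem \ref{thm:levy}): it suffices to show that for every fixed $f\in\cS(\R^d)$,
\[
\cL_{Y_\rho^E/n_3(\rho)}(f)=\exp\pp{\int_{\Rd\times\R_+}\phi_G\pp{\frac{1}{n_3(\rho)}T_r^Ef(x)}\lambda(\rho)\,dx\,F_\rho(dr)}\longrightarrow \cL_{Z_\alpha\topp2}(f)
\]
as $\rho\to0$, where the right-hand side is the characteristic functional of an $\alpha$-stable random variable $\int_\Rd f(x)M_\alpha\topp2(dx)$, namely
\[
\exp\pp{-\sigma^\alpha v_B^\alpha\pp{\int_{\R_+}r^{\alpha q}F(dr)}\int_\Rd|f(x)|^\alpha\pp{1-ib\,\epsilon(f(x))\tan\tfrac{\alpha\pi}2}dx}
\]
(with the usual correction when $\alpha=2$). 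Since $Z_\alpha\topp2$ already admits an $\cS_0'$-valued version by Proposition \ref{prop:Zalpha2}, and one checks readily via the bound $|X_\rho^E(f)|\le\calC_{\rho,k}^E\sup_z(1+|z|)^k|f(z)|$ that $Y_\rho^E/n_3(\rho)$ admits a version in $\cS_0'$, the convergence of the characteristic functionals is exactly what is needed. First I would substitute $F_\rho(dr)=F(dr/\rho)$, perform the change of variable $r\mapsto\rho r$ so that $\int_{\Rd\times\R_+}\phi_G(n_3(\rho)^{-1}T_{\rho r}^Ef(x))\lambda(\rho)\,dx\,F(dr)$ appears, and then a change of variable $x\mapsto\rho^E x$ (noting $|\det\rho^E|=\rho^q$ and $T_{\rho r}^Ef(\rho^E x)=\rho^q T_r^E f_\rho(x)$ appropriately, where we track how $T$ scales under $x\mapsto\rho^E x$); collecting the prefactors $\lambda(\rho)\rho^q$ and recalling $n_3(\rho)=\lambda(\rho)^{1/\alpha}\rho^q$, the exponent becomes $\lambda(\rho)\int\phi_G(\lambda(\rho)^{-1/\alpha}\rho^{-q}\rho^q\,(\cdots))\,\cdots$. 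The key point is that in the zoom-out very-sparse regime the balls become large and, after rescaling, a test function $f$ (which integrates to a finite value but not zero in general) effectively sees each large ball as covering a region where $T_r^Ef(x)\approx r^q v_B \bar f$; more precisely, $T_{\rho r}^Ef(x) = \int_{B_E(x,\rho r)}f(y)\,dy$, and after the substitution turning the argument into the right scale, $\rho^{-q}T_{\rho r}^E f(x)\to r^q v_B f(x)$ pointwise (as $\rho\to0$ the ball $B_E(x,\rho r)$ shrinks around a point in the rescaled picture—here one must be careful which direction the rescaling goes; I would rescale so the limiting integrand is $r^q v_B f(x)$ and the domain of integration in $x$ is all of $\R^d$).

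The core analytic step is then the asymptotics of $\lambda(\rho)\phi_G(\lambda(\rho)^{-1/\alpha}u)$ as $\lambda(\rho)\to\infty$, for fixed $u\in\R$. By assumption \eqref{eq:G}, $G$ lies in the domain of attraction of $S_\alpha(\sigma,b,0)$ with $\alpha\in(1,2]$; the standard theory (e.g.\ \cite{samorodnitsky94stable}) gives, writing $\phi_G(t)=\cL_M(t)-1-it\,\esp(M)$, that $n\,\phi_G(n^{-1/\alpha}u)\to -\sigma^\alpha|u|^\alpha(1-ib\,\epsilon(u)\tan\tfrac{\alpha\pi}2)$ as the integer $n\to\infty$ (with the logarithmic correction absent precisely because $\alpha\in(1,2]$ and $G$ is centered-adjustable; when $\alpha=2$ this is just $-\tfrac{\sigma^2}{2}u^2$ with $\var(M)$ renormalized into $\sigma$). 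One extends this from integer $n$ to the continuous parameter $\lambda(\rho)\to\infty$ by monotonicity/interpolation or by invoking the uniform-convergence-on-compacts version of the convergence-of-types / domain-of-attraction statement. Plugging $u = r^q v_B f(x)$ (after the pointwise limit $\rho^{-q}T_{\rho r}^Ef(x)\to r^q v_B f(x)$ is in hand), the integrand $\lambda(\rho)\phi_G(n_3(\rho)^{-1}\rho^q\cdot\rho^{-q}T_{\rho r}^E f(x))$ converges pointwise in $(r,x)$ to
\[
-\sigma^\alpha (r^q v_B)^\alpha|f(x)|^\alpha\pp{1-ib\,\epsilon(f(x))\tan\tfrac{\alpha\pi}2};
\]
integrating in $r$ against $F(dr)$ produces the factor $\sigma^\alpha v_B^\alpha\int_{\R_+}r^{\alpha q}F(dr)$, which is finite exactly because $\beta\in(\alpha q,\infty)$ (so $\int r^{\alpha q}F(dr)=\int r^{\alpha q}p(r)\,dr<\infty$ by \eqref{eq:f}), and integrating in $x$ against Lebesgue measure gives $\int_\Rd|f(x)|^\alpha(\cdots)\,dx<\infty$ since $f\in\cS(\R^d)\subset L^\alpha(\Rd)$. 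This matches $\cL_{Z_\alpha\topp2}(f)$.

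To promote pointwise convergence of the integrand to convergence of the integral, I would use dominated convergence. The needed domination is a bound $|\phi_G(t)|\le C(|t|\wedge|t|^2)$ when $\alpha=2$, and more generally (since $M$ is integrable and centered) $|\phi_G(t)|\le C\,|t|^{\alpha'}$ for any $\alpha'\in(1,\alpha]$ near $\alpha$, or the cleaner $|\phi_G(t)|\le C(|t|^{1+\delta}\wedge |t|^2)$; together with $\lambda(\rho)|n_3(\rho)^{-1}\cdot|^{\alpha}=\lambda(\rho)\cdot\lambda(\rho)^{-1}\rho^{-\alpha q}|\cdot|^\alpha$ and the pointwise control $|\rho^{-q}T_{\rho r}^E f(x)|\le v_B r^q\|f\|_\infty$ on the support and a polynomial-decay estimate off it (as in the proof of the earlier Proposition for $X_\rho^E\in\cS'$), one obtains an integrable majorant uniform in $\rho$ small — the finiteness of $\int r^{\alpha q}F(dr)$ over $(0,\infty)$ being the crucial input that fails in the other regimes and is guaranteed here by $\beta>\alpha q$. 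The main obstacle, and the step deserving the most care, is precisely this uniform domination: one must interchange the two scalings (in $x$ via $\rho^E$ and in $r$ via $\rho$) correctly, control $\phi_G$ near both $0$ and $\infty$ with a single power bound compatible with the domain-of-attraction normalization, and handle the off-support decay of $T_{\rho r}^Ef(x)$ so that the $x$-integral converges uniformly; once the majorant is established the rest is a routine dominated-convergence passage to the limit followed by an appeal to Theorem \ref{thm:levy}.
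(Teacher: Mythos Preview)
Your approach is correct and is essentially the one the paper intends (it omits the proof, pointing to the sparse-regime argument and to \citep[Theorem~2.19]{breton09rescaled}). Two points would sharpen the write-up.

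First, the change of variable $x\mapsto\rho^E x$ is unnecessary and your own parenthetical remark shows you sensed this. After the substitution $r\mapsto\rho r$ one has
\[
\int_{\Rd\times\R_+}\lambda(\rho)\,\phi_G\!\pp{\frac{T_{\rho r}^Ef(x)}{n_3(\rho)}}\,dx\,F(dr)
=\int_{\Rd\times\R_+}\lambda(\rho)\,\phi_G\!\pp{\lambda(\rho)^{-1/\alpha}\cdot\rho^{-q}T_{\rho r}^Ef(x)}\,dx\,F(dr),
\]
and the pointwise limit $\rho^{-q}T_{\rho r}^Ef(x)\to v_B r^q f(x)$ together with Lemma~\ref{lem:G} gives the desired integrand; no spatial rescaling is needed.

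Second, your domination sketch (``$\|f\|_\infty$ on the support, polynomial decay off it'') is the weak link. The clean and uniform-in-$\rho$ majorant comes from the maximal function already introduced for the sparse regime: since
\[
\abs{\rho^{-q}T_{\rho r}^Ef(x)}\le v_B r^q\cdot\frac1{(\rho r)^q v_B}\int_{B_E(x,\rho r)}|f(y)|\,dy\le v_B r^q f^*(x),
\]
the bound $|\phi_G(t)|\le C|t|^\alpha$ of Lemma~\ref{lem:G} yields the majorant $C\,v_B^\alpha r^{\alpha q}(f^*(x))^\alpha$, integrable against $dx\,F(dr)$ because $f^*\in L^\alpha(\Rd)$ by Lemma~\ref{lem:maxfunct} and $\int_{\R_+}r^{\alpha q}F(dr)<\infty$ precisely when $\beta>\alpha q$. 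This replaces your ad hoc decay estimate and closes the argument.
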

\subsection{Summary}

For comparison, we summarize in a single statement 
the limit theorems of the different regimes.

\begin{Thm}
Suppose that the assumptions \eqref{eq:f} and \eqref{eq:G} on $F$ and $G$ hold. We have the following weak convergence in $\S_n'(\Rd)$:
\[
\begin{array}{ll}
\displaystyle (\mbox{dense}) \hfill  \frac1{(\rho^\beta\lambda(\rho))^{1/\alpha}}Y_\rho^E \weakto Z_{\alpha,\beta}^E & \mbox{ if }\quad\lambda(\rho)\rho^\beta\to \infty, \beta, n \mbox{ as in \eqref{eq:betan}},\\
\\
(\mbox{intermediate}) \hfill  Y_\rho^E\weakto J_{a,\alpha,\beta}^E & \mbox{ if }\quad \lambda(\rho)\rho^\beta\to a^{q-\beta}\in(0,\infty), \beta, n \mbox{ as in \eqref{eq:betan}},\\
\\
\displaystyle(\mbox{sparse}) \hfill\frac1{(\rho^\beta\lambda(\rho))^{q/\beta}}Y_\rho^E\weakto Z_{\beta/q}\topp1 & \mbox{ if }\quad\lambda(\rho) \rho^\beta\to 0, \lambda(\rho)\to 0^{q-\beta}, \beta, n \mbox{ as in \eqref{eq:betan'}}, \\
\\
\displaystyle(\mbox{very sparse})\ \   \hfill \frac1{\rho^q\lambda(\rho)^{1/\alpha}}Y_\rho^E\weakto Z_\alpha\topp 2 & \mbox{ if }\quad\lambda(\rho)\rho^\beta\to 0, \lambda(\rho)\to \infty, \beta, n \mbox{ as in \eqref{eq:betan''}},
\end{array}
\]
where in all cases the limit is considered as $\rho\to0^{\beta-q}$.
\end{Thm}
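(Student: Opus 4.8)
The statement is an assembly of Theorems~\ref{thm:tightness}, \ref{thm:tightness2}, \ref{thm:3} and \ref{thm:4}, so it suffices to prove those four, and the plan below describes the common strategy, the execution being deferred to Section~\ref{sec:proof}. In each regime the argument runs in three steps, following the scheme announced at the beginning of Section~\ref{sec:limit}. First, one identifies the candidate limit field — $Z_{\alpha,\beta}^E$, $J_{a,\alpha,\beta}^E$, $Z_{\beta/q}\topp1$ or $Z_\alpha\topp2$ — as a stable stochastic integral (a compensated Poisson integral in the intermediate case), checks that the integrand ($T_r^Ef(x)$ or $f(x)$) belongs to the relevant $L^\alpha$ or $L^\gamma$ space with respect to the control measure, and writes its characteristic functional in closed form; this is exactly the content of Propositions~\ref{Prop:Zalpha}, \ref{prop:Ja}, \ref{prop:Zgamma} and \ref{prop:Zalpha2}. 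Second, one verifies that $f\mapsto\cL(f)$ is continuous on $\cS_n(\R^d)$ and invokes Theorem~\ref{thm:version} (Minlos--Bochner) to produce an $\cS_n'(\R^d)$-valued version of the limit. Third, by Theorem~\ref{thm:levy} (L\'evy continuity) the weak convergence reduces to the pointwise convergence, for each fixed $f\in\cS_n(\R^d)$, of
\[
\cL_{c(\rho)\inv Y_\rho^E}(f)=\exp\pp{\int_{\Rd\times\R_+}\phi_G\pp{c(\rho)\inv T_r^Ef(x)}\lambda(\rho)\,dx\,F_\rho(dr)}
\]
to the characteristic functional of the announced limit, where $c(\rho)$ is $n_1(\rho)^{1/\alpha}$, $1$, $n_2(\rho)$ or $n_3(\rho)$ according to the regime.

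For this third step one substitutes $F_\rho(dr)=F(dr/\rho)$, rescales the spatial variable by $\rho^{-E}$ so that the set becomes $a^{-E}B_E(x,r)$ as in \eqref{eq:Tra}, expands $\phi_G$ using \eqref{eq:PsiG} and the domain-of-attraction hypothesis \eqref{eq:G}, and splits the $r$-integral at $r=1$ — small balls dominate in the zoom-in case ($n=1$) and large balls in the zoom-out case ($n=0$), which is precisely what the compact notation $\rho\to0^{\beta-q}$ encodes. The regular variation \eqref{eq:f}, together with a Potter-type bound, then yields the convergence of the rescaled radius integral to $C_\beta r^{-1-\beta}\,dr$ (or, in the sparse/very-sparse regimes, to the one-dimensional $\gamma$- or $\alpha$-stable normalizing integrals), and one passes to the limit under the integral sign. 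In the very-sparse regime the Poisson integral over a vanishing-intensity field is additionally shown to converge to an $\alpha$-stable integral, which is the distributional limit forced by \eqref{eq:G}.

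The main obstacle is this last passage to the limit, and specifically the construction of $\rho$-uniform integrable dominating functions justifying the exchange of limit and integral. The essential ingredients are a decay bound on $|T_r^Ef(x)|$ in $|x|$ (available since $f\in\cS(\R^d)$, as already used in the proof that $X_\rho^E\in\cS'(\R^d)$), the scaling behaviour of $T_r^Ef$ under $x\mapsto\rho^E x$, and — decisively in the intermediate and sparse regimes — the Lipschitz estimate \eqref{eq:symdiff} for the covariogram of $B$ coming from the finite-perimeter assumption \eqref{eq:finitePerimeter}, which controls the small-radius contribution and is exactly what allows $\beta$ to drop below $q$, down to $q-a_d$ in \eqref{eq:betan} and to $q^2/(q+a_d)$ in \eqref{eq:betan'}. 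The anisotropy enters through the smallest eigenvalue $a_d$ of $E$ in these thresholds and obliges one to carry out the radial estimates in the Jordan-type polar coordinates adapted to $E$ rather than in Euclidean ones; this is the single point at which the argument genuinely departs from the isotropic literature. Once these bounds are secured, dominated convergence closes each of the four cases and hence the summary statement.
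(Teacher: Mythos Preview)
Your three-step plan (define the limit field as a stochastic integral, Minlos--Bochner for existence of an $\cS_n'$-valued version, L\'evy continuity to reduce to pointwise convergence of characteristic functionals) is exactly the paper's scheme, and the summary statement is indeed nothing more than the juxtaposition of Theorems~\ref{thm:tightness}, \ref{thm:tightness2}, \ref{thm:3} and \ref{thm:4}, so no new work is required beyond those.

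That said, several details of your roadmap are off and would not execute as written. First, there is no spatial rescaling by $\rho^{-E}$ in the proofs: the paper integrates out $x$ first and reduces each convergence to a one-variable problem in $r$, handled by a single regular-variation lemma (Lemma~\ref{lem:g}) which asks only for pointwise convergence of $g_\rho(r)=\int_{\Rd}\phi_G(\cdots)\,dx$ together with a two-sided power bound $|g_\rho(r)|\le C(r^{\beta_-}\wedge r^{\beta_+})$. The $a^{-E}$ in \eqref{eq:Tra} is part of the definition of the intermediate limit field, not a change of variable used in the proof. Second, the Lipschitz covariogram estimate \eqref{eq:symdiff} controls the \emph{large}-$r$ tail of $\|T_r^Ef\|_{L^\gamma}$ for $f\in\cS_1(\R^d)$ (Proposition~\ref{prop:Tr}(ii)), not the small-radius side, and it is used in every regime with $n=1$, the dense one included. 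Third, and most substantively, your sketch omits the two non-routine ingredients of the sparse proof: the $E$-adapted maximal function $f^*(x)=\sup_{r>0}(r^qv_B)^{-1}T_r^E|f|(x)$ and its $L^\alpha$-boundedness (Lemma~\ref{lem:maxfunct}, proved via the quasi-distance $\tau$ and a Stein-type maximal theorem), which is precisely what supplies the $\rho$-uniform domination needed in \eqref{cond:boundgrho}; and, in the sparse $n=1$ case, a further splitting of the $x$-integral along $\{\tau(x)\lessgtr 2\kappa n_2(\rho)^{1/q}r\}$ combined with H\"older to reach the threshold $q^2/(q+a_d)$. Without these the dominated-convergence step in the sparse regime does not close.
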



\section{Properties of the limit fields}\label{sec:prop}
In this section, we provide some properties of the limit generalized random fields. In the dense and intermediate regimes, the limit generalized random fields explicitly depend on $E$,
and in particular so are their anisotropic properties. For the sparse and very-sparse regimes, all the dependence structures in the discrete models are not observable in the limit, and thus the limit generalized random fields have no specific anisotropic properties. 
Following Dobrushin in \cite{dobrushin79gaussian}, using duality, we can define the following groups of transformations on $\cS_n(\R^d)$:
\begin{itemize}
\item
the group of shift transformations 
${\mathcal T}=\{\tau_h\}_{h\in\R^d}$:
$$\tau_hf(t)=f(t-h),\quad f \in \cS_n(\R^d),~ h\in\R^d,~ t\in\R^d;$$
\item
the group of $E$-operator-scaling  transformations ${\Delta^{E}}=\{\delta^E_c\}_{c\in (0,\infty)}$:
$$\delta^E_c f(t)=c^{-q}f(c^{-E}t),\quad f \in \cS_n(\R^d),~ c\in (0,\infty),~ q=\mbox{tr}(E),~  t\in\R^d.$$
\end{itemize}
Their analogous ${\mathcal T}$, ${\Delta^{E}}$ on $\cS_n'(\R^d)$ are then defined by 
$$\tau_hL(f):=L(\tau_h f), \mbox{ and } \delta^E_c L(f):=L(\delta^E_c f),$$
for $ L\in \cS_n'(\R^d)$. 
Let us note that when the tempered distribution $L$ is given by a function $g$, one recovers
that  $\tau_h L$ is given by the function $g(\cdot+h)$ and $\delta^E_cL$ is given by the function $g(c^E\cdot)$,
thanks to the normalization term. 
\begin{Prop}\label{os:gen}Let $\alpha \in (1,2]$. For  $\beta, n$ as in~\eqref{eq:betan}, the generalized random field 
 $Z_{\alpha,\beta}^E$ in \eqref{Zalpha} is
\begin{itemize}
\item  shift-invariant: $\forall h\in\R^d$, 
$$\tau_h Z_{\alpha,\beta}^E\stackrel{d}{=} Z_{\alpha,\beta}^E,$$
\item
$(E,H)$-operator-scaling for $H=\frac{q-\beta}{\alpha}\in (-q(1-1/\alpha),0)\cup (0,a_d/\alpha)$: $\forall c>0$,
 $${\delta^{E}_c}Z_{\alpha,\beta}^E\stackrel{d}{=} c^{H}Z_{\alpha,\beta}^E.$$ 
\end{itemize}
\end{Prop}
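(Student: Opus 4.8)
The plan is to verify both invariance properties directly at the level of characteristic functionals, using Theorem~\ref{thm:levy} (equality of characteristic functionals implies equality in law for $\cS_n'(\R^d)$-valued random variables). So the whole argument reduces to showing that $\cL_{Z_{\alpha,\beta}^E}$ is unchanged under precomposition of the test function $f$ with the dual transformations $\tau_h$ and $\delta^E_c$, up to the scalar factor $c^H$ in the operator-scaling case.

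For shift-invariance, I would start from formula~\eqref{eq:charZalpha} and compute $\cL_{Z_{\alpha,\beta}^E}(\tau_h f)$. The key identity is $T_r^E(\tau_h f)(x) = T_r^E f(x-h)$, which follows immediately from $B_E(x,r) = x + r^E B$ and a change of variables in the defining integral~\eqref{kernel}. Substituting this into~\eqref{eq:charZalpha} and then making the change of variables $x\mapsto x+h$ in the outer integral over $\R^d$ (whose Lebesgue measure is translation-invariant, and the $r$-part of the control measure is untouched) leaves the integral unchanged; hence $\cL_{Z_{\alpha,\beta}^E}(\tau_h f) = \cL_{Z_{\alpha,\beta}^E}(f)$, which is the claim by the duality definition $\tau_h L(f) = L(\tau_h f)$ and Theorem~\ref{thm:levy}.

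For the operator-scaling property, I would similarly compute $T_r^E(\delta^E_c f)(x)$. Using $\delta^E_c f(t) = c^{-q} f(c^{-E}t)$ and the change of variables $y = c^E z$ (with Jacobian $c^q = c^{\tr(E)}$) inside $T_r^E(\delta^E_c f)(x) = \int_{B_E(x,r)} c^{-q} f(c^{-E}y)\,dy$, one finds the scaling relation $T_r^E(\delta^E_c f)(x) = T_{r/c}^E f(c^{-E}x)$, because $c^{-E}B_E(x,r) = c^{-E}(x + r^E B) = c^{-E}x + (r/c)^E B = B_E(c^{-E}x, r/c)$ (using that $c^{-E}$ and $r^E$ commute since both are powers of $E$, so $c^{-E}r^E = (r/c)^E$). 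Plugging this into~\eqref{eq:charZalpha}, the integrand becomes $|T_{r/c}^E f(c^{-E}x)|^\alpha(1 - ib\,\epsilon(T_{r/c}^E f(c^{-E}x))\tan\tfrac{\alpha\pi}{2}) r^{-1-\beta}$; then substitute $x = c^E u$ (Jacobian $c^q$) and $r = cs$ (so $dr = c\,ds$, $r^{-1-\beta} = c^{-1-\beta}s^{-1-\beta}$). The accumulated scalar is $c^q \cdot c \cdot c^{-1-\beta} = c^{q-\beta}$, so $\cL_{Z_{\alpha,\beta}^E}(\delta^E_c f) = \exp\{-C_\beta\sigma^\alpha c^{q-\beta}\int |T_s^E f(u)|^\alpha(1 - ib\epsilon(\cdots)\tan\tfrac{\alpha\pi}{2}) s^{-1-\beta}\,ds\,du\}$, which equals $\cL_{c^H Z_{\alpha,\beta}^E}(f) = \cL_{Z_{\alpha,\beta}^E}(c^H f)$ with $H = (q-\beta)/\alpha$ (here one uses $\alpha$-homogeneity: replacing $f$ by $c^H f$ in~\eqref{eq:charZalpha} pulls out a factor $|c^H|^\alpha = c^{q-\beta}$ while $\epsilon(c^H t) = \epsilon(t)$ since $c^H > 0$). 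The range of $H$ stated in the proposition is then read off from the admissible range~\eqref{eq:betan}: for $n=0$, $\beta\in(q,\alpha q)$ gives $H\in(-q(1-1/\alpha),0)$, and for $n=1$, $\beta\in(q-a_d,q)$ gives $H\in(0,a_d/\alpha)$.

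The only mildly delicate point — and the step I would be most careful about — is justifying the changes of variables under the stochastic-integral / control-measure interpretation rather than treating~\eqref{eq:charZalpha} as a purely formal expression; but since Proposition~\ref{Prop:Zalpha} already establishes that the integral defining $\cL_{Z_{\alpha,\beta}^E}$ converges absolutely for $\beta,n$ as in~\eqref{eq:betan}, all these manipulations are legitimate Fubini/substitution arguments on an absolutely convergent integral, and no real obstacle remains. One should also note explicitly that $c^{-E}$ commutes with $r^E$ (both being analytic functions of the single matrix $E$), which is what makes $c^{-E}r^E = (r/c)^E$ valid and is the one genuinely ``operator'' (as opposed to scalar) ingredient in the computation.
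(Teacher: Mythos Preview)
Your proof is correct and follows essentially the same approach as the paper: both establish the key identities $T_r^E(\tau_h f)(x)=T_r^Ef(x-h)$ and $T_r^E(\delta_c^E f)(x)=T_{r/c}^Ef(c^{-E}x)$ and then conclude by a change of variable. The only cosmetic difference is that the paper phrases the change of variable at the level of the stochastic integral $\int T_r^E f(x)\,M_{\alpha,\beta}(dx,dr)$ itself (using the scaling of the control measure $\sigma^\alpha C_\beta r^{-1-\beta}drdx$), whereas you unpack this into the explicit characteristic functional~\eqref{eq:charZalpha}; the computations are identical.
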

Let us remark that in \cite{dobrushin79gaussian} the first property is called the  stationary $n$-th increments 
while the second one with $E = I_d$ the self-similarity property.
\begin{proof} It suffices to compute the characteristic functional.
Observe that for $f \in \cS_n(\R)$, one has for all $h\in\R^d$,
$$
Z_{\alpha,\beta}^E(\tau_h f)= \int_{\Rd\times\R_+}T_r^Ef(x-h) M_{\alpha,\beta}(dx,dr)  \eqd Z_{\alpha,\beta}^E(f),$$
by 
 a change of variable, while for all $c>0$,
\begin{align*}
Z_{\alpha,\beta}^E(\delta_c^Ef) 
&= \int_{\Rd\times\R_+}T_r^E\delta_c^Ef(x) M_{\alpha,\beta}(dx,d r)\\
&= \int_{\Rd\times\R}T_{r/c}^Ef(c^{-E}x)M_{\alpha,\beta}(dx,d r)\\
& \eqd  c^{(q-\beta)/\alpha}\int_{\Rd\times\R}T_r^Ef(x)M_{\alpha,\beta}(dx,dr) = c^{(q-\beta)/\alpha}Z_{\alpha,\beta}^E(f),
\end{align*}
where the third step also followed from a change of variable argument. 
\end{proof}
For the intermediate case, the limit random field 
$J^E_{a,\alpha,\beta}$ 
in \eqref{eq:Ja} is not $E$-operator-scaling but it has aggregate $E$-operator-scaling property
 as described below, generalizing aggregate similarity property introduced in \cite{bierme10selfsimilar}.
\begin{Prop}
Under the assumption of Theorem~\ref{thm:tightness2}, 
\[
\delta_{k^{1/(q-\beta)}}^E {J}_{a,\alpha,\beta}^E \eqd \summ i1k {J}_{a,\alpha,\beta}^{E,(i)}, \mfa k\in\N,
\]
where $\{{J}_{a,\alpha,\beta}^{E,(i)}\}_{i=1,\dots,k}$ are i.i.d.~copies of ${J}_{a,\alpha,\beta}^{E}$. Furthermore,
\[
\frac1{a^{(q-\beta)/\alpha}}{J}_{a,\alpha,\beta}^E\Rightarrow Z_{\alpha,\beta}^E \quad \mmas a\to 0^{\beta-q}.
\]
\end{Prop}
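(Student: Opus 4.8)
The plan is to work entirely through characteristic functionals, as elsewhere in the paper. Both assertions will follow from the explicit formula \eqref{characJa} for $\cL_{J_{a,\alpha,\beta}^E}$ combined with two changes of variables — one in the spatial variable $x$, one in the scale variable $r$ — that exploit the pure power weight $r^{-1-\beta}$ and the identity $\det c^E = c^q$.

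For the aggregate operator-scaling identity, since $\delta_c^E J_{a,\alpha,\beta}^E$ and $\sum_{i=1}^k J_{a,\alpha,\beta}^{E,(i)}$ are both generalized random fields, by uniqueness of the characteristic functional it is enough to show $\cL_{J_{a,\alpha,\beta}^E}(\delta_c^E f)=\cL_{J_{a,\alpha,\beta}^E}(f)^k$ for all $f\in\cS_n(\Rd)$, with $c:=k^{1/(q-\beta)}$ (note that $\delta_c^E$ preserves $\cS_n(\Rd)$). First I would record, using the substitution $y=c^E z$ inside the integral \eqref{eq:Tra} defining $T_{r,a}^E$ together with $c^{-E}a^{-E}=(ac)^{-E}$, that $T_{r,a}^E(\delta_c^E f)(x)=T_{r,ac}^E f(x)=T_{r/(ac)}^E f((ac)^{-E}x)$. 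Plugging this into \eqref{characJa} and then substituting $r=cs$ and $x=c^E u$, the factor $r^{-1-\beta}\,dr\,dx$ turns into $c^{q-\beta}\,s^{-1-\beta}\,ds\,du$ while the argument of $\phi_G$ becomes $T_{s,a}^E f(u)$; hence $\cL_{J_{a,\alpha,\beta}^E}(\delta_c^E f)=\cL_{J_{a,\alpha,\beta}^E}(f)^{c^{q-\beta}}$, and the choice $c^{q-\beta}=k$ gives the claim. Equality of all finite-dimensional distributions follows in the same way, using linearity of $\delta_c^E$.

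For the convergence as $a\to 0^{\beta-q}$, by Theorem~\ref{thm:levy} it suffices to prove $\cL_{J_{a,\alpha,\beta}^E}(a^{-(q-\beta)/\alpha}f)\to\cL_{Z_{\alpha,\beta}^E}(f)$ for every $f\in\cS_n(\Rd)$. Write $\lambda:=a^{-(q-\beta)/\alpha}$; under \eqref{eq:betan} one has $\lambda\to 0$ in both the zoom-out ($\beta>q$, $a\to0$) and the zoom-in ($\beta<q$, $a\to\infty$) case, and $a^{q-\beta}=\lambda^{-\alpha}$. Using linearity of $T_{r,a}^E$ and then the substitution $r=as$, $x=a^E u$ in \eqref{characJa}, I would obtain
\[
\cL_{J_{a,\alpha,\beta}^E}(\lambda f)=\exp\ccbb{C_\beta\int_{\RdRp}\lambda^{-\alpha}\phi_G\pp{\lambda\,T_r^E f(x)}\,r^{-1-\beta}\,dr\,dx}.
\]
The domain-of-attraction hypothesis \eqref{eq:G} yields the pointwise limit $\lambda^{-\alpha}\phi_G(\lambda t)\to-\sigma^\alpha|t|^\alpha(1-ib\,\epsilon(t)\tan\frac{\alpha\pi}{2})$ as $\lambda\to0$ (take logarithms in \eqref{eq:G} written for characteristic functions and invoke regular variation), which is exactly the integrand in \eqref{eq:charZalpha}. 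It then remains to pass to the limit under the integral sign: I would use the standard estimate $|\phi_G(s)|\le C(|s|\wedge|s|^\alpha)$ valid for $G$ in a normal domain of attraction to obtain $|\lambda^{-\alpha}\phi_G(\lambda t)|\le C|t|^\alpha$ uniformly in $\lambda\le1$, and then dominated convergence applies because $\int_{\RdRp}|T_r^E f(x)|^\alpha\,r^{-1-\beta}\,dr\,dx<\infty$ under \eqref{eq:betan} — this being precisely the integrability that makes $Z_{\alpha,\beta}^E$ well defined in Proposition~\ref{Prop:Zalpha}. This identifies the limit as $\cL_{Z_{\alpha,\beta}^E}(f)$.

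The only genuinely delicate point is the uniform-in-$\lambda$ domination in the second part. However, after the change of variables $\cL_{J_{a,\alpha,\beta}^E}(a^{-(q-\beta)/\alpha}f)$ has the same shape as the characteristic functional of the normalized random ball field treated in the dense-regime limit theorem (Theorem~\ref{thm:tightness}), with $n_1(\rho)$ replaced by $a^{q-\beta}$ and with the radius measure already equal to its power-law limit $C_\beta r^{-1-\beta}\,dr$; so the estimates established there transfer verbatim, and the second statement may in fact be read off as that limit theorem taken along the family $\{J_{a,\alpha,\beta}^E\}$. The first statement is a routine scaling computation with no real obstacle.
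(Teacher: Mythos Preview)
Your proposal is correct and follows essentially the same route as the paper, which merely sketches: ``The first part of the proof follows from straightforward calculation of characteristic functionals, with a similar change of variable argument\dots\ The second part of the proof follows from convergence of characteristic functionals for random variables in the domain of attraction of $S_\alpha(\sigma,b,0)$.'' Your computations make these two steps explicit and are accurate; in particular the identity $T_{r,a}^E(\delta_c^E f)=T_{r,ac}^E f$ and the subsequent change of variables $(r,x)\mapsto(cs,c^E u)$ are exactly the ``change of variable argument'' the paper alludes to, and for the second part the uniform bound $|\lambda^{-\alpha}\phi_G(\lambda t)|\le C|t|^\alpha$ follows directly from \eqref{eq:phi_G} (you do not need the sharper $|\phi_G(s)|\le C(|s|\wedge|s|^\alpha)$), with the dominating function integrable by Proposition~\ref{Prop:Zalpha}.
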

\begin{proof}
The first part of the proof follows from straightforward calculation of characteristic functionals, with a similar change of variable argument as above. The second part of the proof follows from convergence of characteristic functionals for random variables in the domain of attractions of $S_\alpha(\sigma,b,0)$. The details are omitted.
\end{proof}
At last, remark that in the sparse and very-sparse regimes, the limit random fields have essentially no dependence structure, as the limit random fields are stochastic integrals with respect to stable random measures with {\it constant} control measure on $\Rd$. Thus they inherit no specific anisotropic properties. Nevertheless, for any $E'$ satisfying the same assumption as $E$ with possibly different eigenvalues, writing $q' = {\rm tr}(E')$, it can be shown that
\[
\delta_c^{E'}{Z}_\theta\topp i \eqd c^{\frac{1-\theta}{\theta}q'}{Z}_\theta\topp i
\]
for $i=1,2$ with legitimate parameter $\theta$.

\section{Comments on pointwise representation}\label{sec:represent}
Given a tempered distribution $L\in \cS'(\R^d)$,  it is a natural question to wonder if it may be represented by a Borel measurable function $g$, that is 
$$\forall f \in \cS(\R^d),\quad L(f)=\int_{\R^d}f(t)g(t)dt.$$
We say that a generalized random field $X$ admits a \textit{pointwise reprensentation} if there exists a measurable random field $\{\widehat X(t)\}_{t\in\R^d}$,
meaning as in Definition~9.4.1 of \cite{samorodnitsky94stable} that $\widehat X:\Omega\times \R^d\rightarrow \R$ is a jointly measurable function, such that
\begin{equation*}
X(f)=\int_{\R^d}\widehat X(t) f(t)dt,\quad f\in\cS(\R^d).
\end{equation*}
Conversely, we have the following property.
\begin{Prop}\label{prop:point0}
Let $\{\widehat X(t)\}_{t\in\R^d}$  be a measurable random field.
If there exists $k\in \N$ such that
$$
\int_{\R^d}(1+|t|)^{-k}\mathbb{E}(|\widehat X(t)|)dt<\infty,
$$
then the random field $X$, defined on $\cS_n(\R^d)$ by
$
X(f)=\int_{\R^d}\widehat X(t) f(t)dt
$,
admits a regularization that is a generalized random field.
Moreover, if $\widehat X$ is $(E,H)$-operator-scaling for some $H>0$ in the sense of \eqref{def:op-sc},
then $X$ is $(E,H)$-operator-scaling in the sense of Proposition~\ref{os:gen}.
\end{Prop}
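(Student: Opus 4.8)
The plan is to first establish that $X$ is a well-defined collection of real random variables on $\cS_n(\R^d)$, then invoke Theorem~\ref{thm:version} to get a version in $\cS_n'(\R^d)$, strengthen "version" to "regularization" as in the preceding proofs, and finally transfer the operator-scaling relation from $\widehat X$ to $X$ by a change of variables. First I would fix $k\in\N$ as in the hypothesis and observe that for $f\in\cS_n(\R^d)$,
\[
|X(f)| \le \int_{\R^d}|\widehat X(t)|\,|f(t)|\,dt \le \left(\sup_{t\in\R^d}(1+|t|)^k|f(t)|\right)\int_{\R^d}(1+|t|)^{-k}|\widehat X(t)|\,dt,
\]
so by the integrability assumption the random variable $\int_{\R^d}(1+|t|)^{-k}|\widehat X(t)|\,dt$ is a.s.\ finite (its expectation is finite), hence $X(f)$ is a.s.\ a well-defined real random variable, and the map $f\mapsto X(f)$ is a.s.\ linear, so \eqref{eq:linear} holds. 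Moreover the same bound shows $|X(f)|\le C\,\nn{f}_{k,0}$ with $C$ an a.s.-finite random variable, and taking expectations, $\esp|X(f)|\le C'\nn{f}_{k,0}$; this gives that $\cL_X(f)=\esp(e^{iX(f)})$ is continuous on $\cS_n(\R^d)$ (if $f_m\to f$ in $\cS_n$, then $\nn{f_m-f}_{k,0}\to 0$, so $X(f_m)\to X(f)$ in $L^1$ hence in probability, hence $\cL_X(f_m)\to\cL_X(f)$ by dominated convergence). By Theorem~\ref{thm:version}, $X$ admits a version that is an $\cS_n'(\R^d)$-valued random variable.

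To upgrade to a regularization (defined on the same probability space, with $\tilde X(f)=X(f)$ a.s.\ for all $f$), I would argue as in the proof of the Proposition on $X_\rho^E, Y_\rho^E$: the bound above shows that a.s.\ the linear functional $f\mapsto X(f)$ is continuous for the topology of $\cS_n(\R^d)$ on a countable dense subset, and since $X(f)$ is itself already defined pathwise as the integral $\int \widehat X(t)f(t)\,dt$ — a genuine function of $\omega$ — the map $\omega\mapsto\big(f\mapsto X(f)(\omega)\big)$ lands in $\cS_n'(\R^d)$ for every $\omega$ in the full-measure set where $\int(1+|t|)^{-k}|\widehat X(t)|\,dt<\infty$; setting it to $0$ off that set produces an $\cS_n'(\R^d)$-valued random variable $\tilde X$ with $\tilde X(f)=X(f)$ a.s., i.e.\ a regularization. (For $n=1$ one passes through $\cS'(\R^d)$ via the map $U$ and $\pi$ as in the proof of Theorems~\ref{thm:version}--\ref{thm:levy}, noting $\cS_1(\R^d)\subset\cS(\R^d)$.)

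For the last assertion, suppose $\widehat X$ is $(E,H)$-operator-scaling in the sense of \eqref{def:op-sc}. I would compute, for $c>0$ and $f\in\cS_n(\R^d)$, using the definition of $\delta_c^E$ on $\cS_n'$ and then a change of variable $t=c^E s$ (whose Jacobian is $c^q$):
\[
(\delta_c^E X)(f) = X(\delta_c^E f) = \int_{\R^d}\widehat X(t)\,c^{-q}f(c^{-E}t)\,dt = \int_{\R^d}\widehat X(c^E s)\,f(s)\,ds,
\]
and since $\{\widehat X(c^E s)\}_{s}\eqd c^H\{\widehat X(s)\}_s$ as random fields (in finite-dimensional distributions, which is all \eqref{def:op-sc} asserts and all that is needed to identify the law of the integral functional), the right-hand side has the same finite-dimensional distributions as $c^H\int_{\R^d}\widehat X(s)f(s)\,ds = c^H X(f)$. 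Hence $\delta_c^E X\eqd c^H X$ as generalized random fields, which is the operator-scaling property in the sense of Proposition~\ref{os:gen}. One should be slightly careful that the equality in distribution of the random fields $s\mapsto\widehat X(c^E s)$ and $s\mapsto c^H\widehat X(s)$ at the level of finite-dimensional distributions passes to the integrals against a fixed finite collection $f_1,\dots,f_j\in\cS_n(\R^d)$: this follows because each $\int\widehat X(\cdot)f_i$ is an a.s.\ limit of Riemann-type sums $\sum_\ell \widehat X(t_\ell)f_i(t_\ell)\Delta_\ell$ (using the integrability bound to justify convergence), and finite-dimensional distributions are preserved under such limits. The main obstacle is precisely this measurability/approximation bookkeeping — ensuring the pathwise integral defines a jointly measurable object landing in $\cS_n'(\R^d)$ and that the change of variables is compatible with equality in law — rather than any analytic difficulty; the integrability hypothesis $\int(1+|t|)^{-k}\esp|\widehat X(t)|\,dt<\infty$ is exactly what makes all of it routine.
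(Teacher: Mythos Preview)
Your proof is correct and follows essentially the same approach as the paper: the paper derives the identical bound $\int_{\R^d}|\widehat X(t)f(t)|\,dt\le \calC_k\sup_z(1+|z|)^k|f(z)|$ with $\calC_k=\int(1+|t|)^{-k}|\widehat X(t)|\,dt$ a.s.\ finite, concludes that $X$ is a.s.\ a continuous linear functional on $\cS_n(\R^d)$, and then obtains the regularization directly (citing Fernique) by redefining on the null set---so your detour through Theorem~\ref{thm:version} to first produce a \emph{version} is unnecessary, as you yourself observe when you then give the direct pathwise argument. For the operator-scaling part the paper simply declares it ``straightforward'' via the same change of variables you write down; your extra care about passing the finite-dimensional-distribution identity through the integral is warranted and goes beyond what the paper spells out.
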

\begin{proof}
Under the assumption, one checks that for all $f\in \cS_n(\R^d)$,
$$\int_{\R^d}|\widehat X(t) f(t)|dt\le {\mathcal C}_k\underset{z\in\R^d}{\sup}(1+|z|)^{k}|f(z)|,$$
where the random constant ${\mathcal C}_k=\int_{\R^d}(1+|t|)^{-k}|\widehat X(t)|dt$ is a.s.\ finite.
This implies that the linear random field $X$ is well-defined and a.s.\ continuous. Hence there exists a regularization of $X$ on $\cS'_n(\R^d)$, see \cite[p.40]{fernique67processus}. The last property of the proposition is straightforward.
\end{proof}

Our centered rescaled random ball field $Y_\rho^E$ defined in Section~\ref{sec:prelim} clearly admits a pointwise representation where $\widehat Y_\rho^E=\widehat X_\rho^E-\esp\widehat X_\rho^E$ and
\[
 \widehat X_\rho^E(t)=\int_{\Rd\times\R_+\times\R}m K_r^E(x,t){\mathcal N}_\rho( dx, dr,d m),\quad t\in\R^d,
\]
with the same Poisson random measure $\mathcal N_\rho$ than in \eqref{eq:XE1}.
Let us consider the limit generalized random field $Z_{\alpha,\beta}^E$ of the dense regime in the case of symmetric weights ($b=0$).
Actually, there are two situations that we treated separately in the following sub-sections.
\subsection{The case $\beta \in (q-a_d,q)$ and $H=\frac{q-\beta}{\alpha}\in (0,a_d/\alpha)$.}
In this case, as proved in Proposition~\ref{prop:point} below, $Z_{\alpha,\beta}^E$ admits a pointwise representation with
\[
\widehat{Z}_{\alpha,\beta}^E(t)=\int_{\R^d\times\R_+}(\mathbf{1}_{B_E(x,r)}(t)-\mathbf{1}_{B_E(x,r)}(0))M_{\alpha,\beta}(dr,dx),\quad t\in\R^d,
\]
satisfying \eqref{def:op-sc} and $M_{\alpha,\beta}$ is the same as in the representation of $Z_{\alpha,\beta}^E$. 
Let us introduce ${\mathcal C}_{_{E}}(t)=\{(x,r); r^{-E}(x-t) \in B\}$ and note that 
\[
\widehat{Z}_{\alpha,\beta}^E(t)=M_{\alpha,\beta}\left({\mathcal C}_{_{E}}(t)\cap {\mathcal C}_{_{E}}(0)^c\right)-M_{\alpha,\beta}\left({\mathcal C}_{_{E}}(t)^c\cap {\mathcal C}_{_{E}}(0)\right),\quad t\in\R^d.
\]
Until here we do not need to assume that $M_{\alpha,\beta}$ has skewness function $b=0$.

With the assumption that  $M_{\alpha,\beta}$ is symmetric, one can check that
\equh\label{eq:symmetric}
\ccbb{\widehat{Z}_{\alpha,\beta}^E(t)}_{t\in\R^d}\overset{f.d.d.}{=}\ccbb{M_{\alpha,\beta}\left(V_t\right)}_{t\in\R^d},
\eque
with $V_t={\mathcal C}_{_{E}}(t)\Delta {\mathcal C}_{_{E}}(0)$. That is, the random field $\what Z_{\alpha,\beta}^E$ has a Chentsov's type representation \citep[Chapter 8]{samorodnitsky94stable}.
In particular, for $H=\frac{q-\beta}{\alpha}\in (0,a_d/\alpha)$ the random field
$\widehat{Z}_{\alpha,\beta}^E$ generalizes isotropic self-similar $(\alpha,H)$-Takenaka random fields (see \citep[p.405]{samorodnitsky94stable}), defined 
by choosing the Euclidean unit ball for $B$ and $E=I_d$, with $a_d=1$. 

The representation \eqref{eq:symmetric} allows us to provide several simulations of our operator-scaling random ball model with symmetric $\alpha$-stable ($S\alpha S$) weights,
following similar ideas as in \cite{BDE13}. See Figures~\ref{Fig1}--\ref{Fig3} in the appendix.

\begin{Prop}\label{prop:point}
For $\beta \in (q-a_d,q)$, there exists a measurable version of $\widehat{Z}_{\alpha,\beta}^E$, also denoted by  $\widehat{Z}_{\alpha,\beta}^E$, such that
${Z}_{\alpha,\beta}^E$ coincides in $\cS'_1(\R^d)$ with the generalized random field
\equh\label{eq:point1}
f\in \cS(\R^d)\mapsto \int_{\R^d}\widehat{Z}_{\alpha,\beta}^E(t)f(t)dt.
\eque
\end{Prop}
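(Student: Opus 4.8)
The plan is to realize $\widehat Z_{\alpha,\beta}^E$ as a genuine measurable random field, check that it satisfies the hypothesis of Proposition~\ref{prop:point0}, and then identify the resulting generalized random field with $Z_{\alpha,\beta}^E$ on $\cS_1(\R^d)$ via a stochastic Fubini argument. First I would verify that for each fixed $t\in\R^d$ the stochastic integral defining $\widehat Z_{\alpha,\beta}^E(t)$ converges. Writing $g_t(x,r):=\mathbf 1_{B_E(x,r)}(t)-\mathbf 1_{B_E(x,r)}(0)$, the integrand takes values in $\{0,\pm1\}$, so $|g_t|^\alpha=|g_t|$, and the change of variables $y=-r^{-E}x$ (Jacobian $r^q$) gives $\int_{\R^d}|g_t(x,r)|\,dx=r^q\,\Leb_d\pp{B\,\Delta\,(B-r^{-E}t)}$, which by \eqref{eq:symdiff} is at most $r^q\min(2v_B,\,C|r^{-E}t|)$. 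Hence
\[
\int_{\RdRp}|g_t(x,r)|^\alpha r^{-1-\beta}\,dr\,dx\;\le\;2v_B\int_0^1 r^{q-1-\beta}\,dr+C\int_1^\infty |r^{-E}t|\,r^{q-1-\beta}\,dr,
\]
and, using the standard homogeneity estimate $|r^{-E}t|\le C_\delta r^{-a_d+\delta}|t|$ for $r\ge1$ (see \citet{bierme07operator}), the first integral is finite because $\beta<q$ and the second because $\beta>q-a_d$ (choose $\delta<a_d-(q-\beta)$). This is exactly where the range $\beta\in(q-a_d,q)$ enters. Consequently $\widehat Z_{\alpha,\beta}^E(t)$ is a well-defined $S_\alpha$ random variable with scale $\sigma_t$ satisfying $\sigma_t^\alpha=\sigma^\alpha C_\beta\int|g_t|^\alpha r^{-1-\beta}dr\,dx$ and skewness in $[-1,1]$; in particular $\sigma_t^\alpha\le C(1+|t|)$.

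Next I would produce a jointly measurable version. Replacing $g_t$ by $g_t-g_s=\mathbf 1_{B_E(x,r)}(t)-\mathbf 1_{B_E(x,r)}(s)$ in the same computation yields $\|\widehat Z_{\alpha,\beta}^E(t)-\widehat Z_{\alpha,\beta}^E(s)\|_\alpha^\alpha=\sigma^\alpha C_\beta\int_0^\infty r^q\,\Leb_d\pp{B\,\Delta\,(B-r^{-E}(t-s))}\,r^{-1-\beta}\,dr$, which tends to $0$ as $|t-s|\to0$ by dominated convergence, the domination being uniform for $|t-s|\le1$ via the bounds above. Since $\esp|\xi|^p=c_{p,\alpha}\|\xi\|_\alpha^p$ for an $S_\alpha$ variable $\xi$ and $0<p<\alpha$, the field $t\mapsto\widehat Z_{\alpha,\beta}^E(t)$ is continuous in $L^p$, hence in probability, and therefore admits a jointly measurable modification, which I keep denoting $\widehat Z_{\alpha,\beta}^E$. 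Because $\alpha>1$, $\esp|\widehat Z_{\alpha,\beta}^E(t)|=c_\alpha(b)\,\sigma_t\le C(1+|t|)^{1/\alpha}$ with $c_\alpha(b)$ bounded for $b\in[-1,1]$, so $\int_{\R^d}(1+|t|)^{-k}\esp|\widehat Z_{\alpha,\beta}^E(t)|\,dt<\infty$ for $k>d+1$; Proposition~\ref{prop:point0} then shows that $X:f\mapsto\int_{\R^d}\widehat Z_{\alpha,\beta}^E(t)f(t)\,dt$ admits a regularization that is a generalized random field in $\cS'(\R^d)$, hence (by restriction) in $\cS_1'(\R^d)$.

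Finally I would identify $X$ with $Z_{\alpha,\beta}^E$ on $\cS_1(\R^d)$. Fix $f\in\cS_1(\R^d)$, so $\int_{\R^d}f=0$. The bound $\int_{\R^d}|f(t)|\,\sigma_t\,dt<\infty$, coming from the polynomial growth of $\sigma_t$ and the rapid decay of $f$, licenses the stochastic Fubini theorem for integrals with respect to $\alpha$-stable random measures (see \citep{samorodnitsky94stable}), giving, almost surely,
\begin{align*}
\int_{\R^d}\widehat Z_{\alpha,\beta}^E(t)f(t)\,dt
&=\int_{\RdRp}\pp{\int_{\R^d}g_t(x,r)f(t)\,dt}M_{\alpha,\beta}(dr,dx)\\
&=\int_{\RdRp}\pp{T_r^Ef(x)-\mathbf 1_{B_E(x,r)}(0)\int_{\R^d}f(t)\,dt}M_{\alpha,\beta}(dr,dx)\\
&=\int_{\RdRp}T_r^Ef(x)\,M_{\alpha,\beta}(dr,dx)=Z_{\alpha,\beta}^E(f),
\end{align*}
where the third equality uses $\int_{\R^d}f(t)\,dt=0$. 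Since this holds a.s.\ for every $f$ in the separable space $\cS_1(\R^d)$ and both $X$ and $Z_{\alpha,\beta}^E$ are, for a.e.\ $\omega$, continuous linear functionals on $\cS_1(\R^d)$, passing through a countable dense subset shows that the two $\cS_1'(\R^d)$-valued random elements coincide, which is the assertion.

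The main obstacle I anticipate is the honest justification of the stochastic Fubini interchange in the last step: one needs a version of that theorem adapted to integrals against $\alpha$-stable random measures with a possibly non-symmetric (but constant) skewness function, and its hypotheses must be checked precisely against the integrability estimate of the second step. The subspace $\cS_1$ is essential here, since it is exactly the vanishing of $\int f$ that removes the otherwise non-integrable contribution $\mathbf 1_{B_E(x,r)}(0)\int f$ produced by the increment structure of the kernel $g_t$; this is also the reason why, for $\beta$ in this range, only a pointwise representation of $Z_{\alpha,\beta}^E$ as an element of $\cS_1'$ — not of $\cS'$ — can be expected. A secondary technical point, entirely routine, is the verification of the homogeneity bounds on $r^{-E}$ used in the first two steps.
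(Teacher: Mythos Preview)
Your proof follows essentially the same route as the paper's: bound $\int|g_t|^\alpha r^{-1-\beta}\,dr\,dx$ via \eqref{eq:symdiff} and the growth of $\|r^{-E}\|$, obtain a measurable version, invoke Proposition~\ref{prop:point0}, and then apply a stochastic Fubini theorem from \citep{samorodnitsky94stable} together with $\int f=0$. Two small differences are worth noting. First, the paper gets the measurable version directly from \citep[Theorem~11.1.1]{samorodnitsky94stable} (joint measurability of the kernel plus $\sigma$-finiteness of the control measure), which is quicker than your $L^p$-continuity argument, though yours is also valid. Second, the stochastic Fubini result actually available in \citep[Theorem~11.4.1]{samorodnitsky94stable} only yields equality \emph{in distribution} for each fixed $f$, not the almost-sure equality you assert; this is precisely the ``honest justification'' you flag as an obstacle, and the paper does not attempt to upgrade it. The weaker conclusion already suffices, since equality of the characteristic functionals on $\cS_1(\R^d)$ gives equality in law of the two $\cS_1'(\R^d)$-valued random elements, so your closing separability/density argument is unnecessary and the a.s.\ claim should simply be weakened to $\eqd$.
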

\begin{proof}
First note that
$$
\int_{\R^d}|\mathbf{1}_{B_E(x,r)}(t)-\mathbf{1}_{B_E(x,r)}(0)|^\alpha dx=r^qh(r^{-E}t),$$
with $h(z)={\mathcal L}_d(B\Delta (z+B))$. According to \eqref{eq:symdiff}, $h$ satisfies $h(z)\le C(|z|\wedge 1)$ for some constant $C>0$.
It follows that 
\begin{align*}
\int_{\R^d\times\R^+}&|\mathbf{1}_{B_E(x,r)}(t)-\mathbf{1}_{B_E(x,r)}(0)|^\alpha \sigma^\alpha C_\beta r^{-1-\beta}drdx\\
&\le C\sigma^\alpha C_\beta \int_{\R^+} r^q(|r^{-E}t|\wedge 1)r^{-1-\beta}dr \le C\sigma^\alpha C_\beta \int_{\R_+}r^q(\|r^{-E}\|\wedge 1) r^{-1-\beta}dr (1+|t|)\\
& = C_{\alpha,\beta}^E(1+|t|),
\end{align*}
with $C_{\alpha,\beta}^E=C\sigma^\alpha C_\beta \int_{\R^+} (\|r^{-E}\|\wedge 1)r^{q-\beta-1}dr<\infty$ and $\|\cdot\|$ the subordinated norm, since $\beta \in (q-a_d,q)$. 
Hence 
$\what Z_{\alpha,\beta}^E(t)$ is well-defined  and is a  $S\alpha S$ random variable with scale parameter bounded by $\left(C_{\alpha,\beta}^E(1+|t|)\right)^{1/\alpha}$, for every $t\in\Rd$.
According to \citep[Theorem 11.1.1]{samorodnitsky94stable} there exists a measurable version of $\widehat{Z}_{\alpha,\beta}^E$ since
\begin{enumerate}
\item $(t,x,r)\in\R^d\times\R^d\times\R^+\mapsto (\mathbf{1}_{B_E(x,r)}(t)-\mathbf{1}_{B_E(x,r)}(0))\in\R$ is measurable;
\item the control measure $\sigma^\alpha C_\beta r^{-1-\beta}drdx$ is $\sigma$-finite.
\end{enumerate}
Noting that by  \citep[Property 1.2.17]{samorodnitsky94stable}, we have
\equh\label{eq:1st_moment}
\mathbb{E}\pp{|\widehat{Z}_{\alpha,\beta}^E(t)|}\le \mathbb{E}(|S_\alpha|)\left(C_{\alpha,\beta}^E(1+|t|)\right)^{1/\alpha},
\eque
with $S_\alpha$ a $S\alpha S$ random variable of scale parameter $1$, we may define $f\in\cS(\R^d)\mapsto \int_{\R^d} \widehat{Z}_{\alpha,\beta}^E(t)f(t)dt$ that is a.s.\ in $\cS'(\R^d)$, thanks to Proposition \ref{prop:point0}.

Now it remains to show that the right-hand side of \eqref{eq:point1} has the same stable law as $Z_{\alpha,\beta}^E(f) = \int_{\Rd\times\R_+}T_r^Ef(x)M_{\alpha,\beta}(dx,dr)$. For this we recall  that 
\equh\label{eq:change_of_order}
\int\what Z_{\alpha,\beta}(t)f(t)dt \eqd \int_{\Rd\times\R_+}\pp{\int_\Rd\pp{\mathbf{1}_{B_E(x,r)}(t)-\mathbf{1}_{B_E(x,r)}(0)}f(t)dt} M_{\alpha,\beta}(dx,dr),
\eque
provided that 
\[
\int_\Rd|\what Z_{\alpha,\beta}^E(t)|f(t)dt<\infty \mbox{ a.s.},
\]
see \citep[Theorem 11.4.1]{samorodnitsky94stable}.
Since $f$ decays rapidly, the above follows from \eqref{eq:1st_moment} and hence \eqref{eq:change_of_order} holds. To complete the proof, it remains 
to remark that for $f \in \cS_1(\R^d)$, one has
$$\int_{\R^d}(\mathbf{1}_{B_E(x,r)}(t)-\mathbf{1}_{B_E(x,r)}(0))f(t)dt=T_r^Ef(x).$$
\end{proof}

\medskip

\subsection{The case $\beta \in (q,\alpha q)$ and $H=\frac{q-\beta}{\alpha}\in (-q(1-1/\alpha),0)$.} 
In this case, $H<0$ and  we do not have direct pointwise representation, but the limit field $Z_{\alpha,\beta}^E$ can be obtained as the derivative (in the sense of distributions) of a pointwise process.
For all $t\in\R^d$, following the same idea as for the definition of $Z_{\alpha,\beta}^E(f)$ for $f\in\cS(\R^d)$, we can define the random variable
\[
\widecheck Z_{\alpha,\beta}^E(t)=
\epsilon(t_1)\cdots\epsilon(t_d)\int_\RdRp T_r^E\ind_{[0,t]}(x)M_{\alpha,\beta}(d r, d x),
\]
where the random measure $M_{\alpha,\beta}$ is the same as in \eqref{Zalpha} and $[0,t]=\prod_{i=1}^d[0,t_i]$. 
The family $\widecheck Z_{\alpha,\beta}^E=\{\widecheck Z_{\alpha,\beta}^E(t)\}_{t\in\R^d}$ is a measurable random field and,
by successive integrations by parts, we can show that 
$Z_{\alpha,\beta}^E = D^{(1,\ldots,1)} \widecheck Z_{\alpha,\beta}^E$, that is for all $f\in\cS(\R^d)$,
\[
 Z_{\alpha,\beta}^E(f)= (-1)^d\int_{\R^d}\widecheck Z_{\alpha,\beta}^E(t)  D^{(1,\ldots,1)}f(t) dt.
\]
This consideration is analogous to \citep[Theorem~2.6 and Lemma~3.7]{breton11functional} for $E=I_d$ and $\beta>q=d$ in ${\mathcal D}'(\R^d)$ the space of distribution instead of $\cS'(\R^d)$. We thus refer to \cite{breton11functional} for technical details.


\section{Proofs of the main results}\label{sec:proof}

\subsection{Preliminary results} 

The proofs of our limit theorems follow the same scheme as in \cite{bierme10selfsimilar} or \cite{breton09rescaled} to establish the convergence of the characteristic functions.
They use the two following lemmas concerning conditions \eqref{eq:f} and \eqref{eq:G}.
\begin{Lem}[Lemma 2.4 in \cite{bierme10selfsimilar}, Lemma 3.2 in \cite{breton09rescaled}]\label{lem:g}
Under the assumption~\eqref{eq:f}, if $\{g_\rho\}_{\rho>0}$, $g$ are continuous functions on $\R_+$ such that 
\equh
\lim_{\rho\to  0^{\beta-q}}|g(r)-g_\rho(r)| = 0,\label{cond:limg}
\eque
and for some $0<\beta_-<\beta<\beta_+$ there exists a constant $C>0$ such that
\eqnhn
& & |g(r)|\leq C(r^{\beta_-}\wedge r^{\beta_+}),\label{cond:boundg}\\
& & |g_\rho(r)| \leq C(r^{\beta_-}\wedge r^{\beta_+}), \label{cond:boundgrho}
\eqnen
for all $r>0$,
then, 
for $C_\beta$ as in \eqref{eq:f},
\[
\int_{\R_+}g_\rho(r) F_\rho(r) \sim C_\beta\rho^\beta \int_{\R_+} g(r) r^{-1-\beta} dr, \quad\text{as }\rho\to0^{\beta-q}. 
\]
\end{Lem}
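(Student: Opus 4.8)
The plan is to prove the asymptotic $\int_{\R_+}g_\rho(r)\,F_\rho(dr)\sim C_\beta\rho^\beta\int_{\R_+}g(r)r^{-1-\beta}\,dr$ by combining the scaling $F_\rho(dr)=F(dr/\rho)$ with the regular-variation hypothesis \eqref{eq:f} and a dominated-convergence argument. First I would perform the change of variables $s=r/\rho$, so that
\[
\int_{\R_+}g_\rho(r)\,F_\rho(dr)=\int_{\R_+}g_\rho(\rho s)\,F(ds)=\int_{\R_+}g_\rho(\rho s)\,p(s)\,ds,
\]
using $F(dr)=p(r)\,dr$ from \eqref{eq:f}. The target right-hand side, after the same substitution $r=\rho s$, equals $C_\beta\rho^\beta\int_{\R_+}g(\rho s)(\rho s)^{-1-\beta}\rho\,ds=C_\beta\int_{\R_+}g(\rho s)s^{-1-\beta}\,ds$. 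Hence the claim reduces to showing
\[
\int_{\R_+}g_\rho(\rho s)\,p(s)\,ds\;\sim\;C_\beta\int_{\R_+}g(\rho s)\,s^{-1-\beta}\,ds\qquad\text{as }\rho\to0^{\beta-q}.
\]

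Next I would handle the two asymptotic regimes encoded by $\rho\to 0^{\beta-q}$ uniformly. When $\beta<q$ (zoom-in, $\rho\to0$) the relevant behaviour of $p$ is near $0$, so $s\mapsto \rho s$ is small; when $\beta>q$ (zoom-out, $\rho\to\infty$) the relevant behaviour of $p$ is near $\infty$, so again $\rho s$ is driven to the regime where \eqref{eq:f} applies. In either case, write $p(s)=C_\beta s^{-1-\beta}(1+o(1))$ on the appropriate neighbourhood, so that the integrand $g_\rho(\rho s)p(s)$ is, pointwise in $s$, asymptotically $C_\beta g(\rho s)s^{-1-\beta}$ once \eqref{cond:limg} lets us replace $g_\rho$ by $g$. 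To make this rigorous I would split the integral at fixed thresholds and use the two-sided power bounds \eqref{cond:boundg}–\eqref{cond:boundgrho}: the bound $|g_\rho(\rho s)|\le C((\rho s)^{\beta_-}\wedge(\rho s)^{\beta_+})$ together with $\beta_-<\beta<\beta_+$ guarantees that $g_\rho(\rho s)s^{-1-\beta}$ is dominated by an integrable function of $s$ independent of $\rho$ — near $s=0$ by $s^{\beta_--1-\beta}$ (integrable since $\beta_->\beta$… wait, near $0$ one uses the exponent giving integrability) and near $s=\infty$ by the complementary power — so that the Potter-type bounds on $p$ combined with dominated convergence yield the limit of the ratio. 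The normalization of $g$ versus $g_\rho$ via \eqref{cond:limg} then lets one pass from $g_\rho(\rho s)$ to $g(\rho s)$ inside the integral, at no cost, using the same domination.

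The main obstacle is the uniform control of the tail/near-origin contributions: one must ensure that the error terms in $p(s)=C_\beta s^{-1-\beta}(1+o(1))$ — which are only guaranteed on a one-sided neighbourhood of $0$ or $\infty$ — do not blow up after multiplication by $g_\rho(\rho s)$, whose support in $s$ effectively expands or contracts as $\rho$ varies. This is exactly what the mismatched power bounds $r^{\beta_-}\wedge r^{\beta_+}$ with $\beta_-<\beta<\beta_+$ are designed for: they force integrability at both ends after dividing by $r^{1+\beta}$, and the $\rho$-dilation only improves the bound in the dangerous region. A clean way to organize this is to fix $\varepsilon>0$, choose $\delta>0$ so that $|p(s)/(C_\beta s^{-1-\beta})-1|<\varepsilon$ for $s$ in the one-sided $\delta$-neighbourhood dictated by the sign of $\beta-q$, bound the complementary region using \eqref{eq:volume}-type integrability together with \eqref{cond:boundgrho}, and let $\rho$ tend to its limit, then $\varepsilon\to0$. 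I would cite \cite{bierme10selfsimilar} and \cite{breton09rescaled} for the detailed estimates, since the argument is a routine but slightly technical adaptation of the one-dimensional computation there, now with the extra variable $s$ in place of the original radius.
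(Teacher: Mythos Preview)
The paper does not prove this lemma at all; it is quoted verbatim from \cite{bierme10selfsimilar} and \cite{breton09rescaled}, so there is no in-paper argument to compare against. What matters is whether your sketch actually works and whether it matches the standard proof in those references.

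Your substitution $s=r/\rho$ is the step that creates the confusion in the middle of the proposal. After that change of variable you claim that ``the integrand $g_\rho(\rho s)p(s)$ is, pointwise in $s$, asymptotically $C_\beta g(\rho s)s^{-1-\beta}$''. This is not meaningful: for fixed $s$ the number $p(s)$ is a constant; it does not tend to $C_\beta s^{-1-\beta}$ as $\rho$ varies. The asymptotic \eqref{eq:f} is in the argument of $p$, not in $\rho$. Your attempted domination then fails for the reason you yourself caught: the bound $|g_\rho(\rho s)|\le C((\rho s)^{\beta_-}\wedge(\rho s)^{\beta_+})$ gives a majorant that depends on $\rho$, and after dividing by $\rho^\beta$ the piece near $s=\infty$ carries a factor $\rho^{\beta_--\beta}\to\infty$. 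So a straight dominated-convergence argument in the $s$ variable cannot work.

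Your final paragraph (fix $\varepsilon$, split at $\delta$, estimate the complementary region using $|g_\rho(\rho s)|\le C(\rho s)^{\beta_+}$ and the fact that $F$ is a probability measure, then send $\rho$ and $\varepsilon$ to their limits) is correct and is exactly what is done in the cited references. But it is cleaner \emph{not} to substitute: keep $r$ as the integration variable and write
\[
\rho^{-\beta}\int_{\R_+}g_\rho(r)\,F_\rho(dr)=\int_{\R_+}g_\rho(r)\,\frac{p(r/\rho)}{\rho^{1+\beta}}\,dr.
\]
Now for each fixed $r>0$ one has $r/\rho\to 0^{q-\beta}$, hence $\rho^{-1-\beta}p(r/\rho)\to C_\beta r^{-1-\beta}$ by \eqref{eq:f}, and $g_\rho(r)\to g(r)$ by \eqref{cond:limg}. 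On the region $\{r/\rho\ge s_0\}$ where the asymptotic of $p$ holds up to a factor $(1\pm\varepsilon)$, the integrand is dominated by $C(r^{\beta_-}\wedge r^{\beta_+})r^{-1-\beta}$, which is integrable precisely because $\beta_-<\beta<\beta_+$; the shrinking complementary region $\{r<s_0\rho\}$ (or $\{r>s_0\rho\}$ in the other regime) contributes $O(\rho^{\beta_+-\beta})=o(1)$ by the bound $|g_\rho(r)|\le Cr^{\beta_+}$ and $\int p\le 1$. This is the argument in \cite{bierme10selfsimilar,breton09rescaled}, and it avoids the detour through the $s$ variable where the pointwise limit is lost.
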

\begin{Lem}[Lemma 3.1 in \cite{breton09rescaled}]\label{lem:G} Suppose that $M$ is in the domain of attraction of $S_\alpha(\sigma,b,0)$ for some $\alpha>1$,
$\sigma>0$ and $b\in\R$. Then 
$$\phi_G(t)=\cL_M(t)-1-it\mathbb{E}(M)\sim-|t|^\alpha\phi_{\alpha,b,\sigma}(t), \mbox{ as } t\rightarrow 0,$$
with 
\begin{equation}\label{def:phi}
\phi_{\alpha,b,\sigma}(t)=\sigma^\alpha(1-ib\epsilon(t)\tan(\alpha\pi/2)),
\end{equation}
where $\epsilon(t)=\text{sign}(t)$. 
Furthermore, there exists $C>0$ such that for all $t \in \R$,
\equh\label{eq:phi_G}
|\phi_G(t)|\le C|t|^\alpha.
\eque
\end{Lem}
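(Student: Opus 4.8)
The plan is to recognize the claimed local behaviour of $\phi_G$ as a characteristic-function reformulation of the hypothesis that $M$ lies in the domain of normal attraction of $S_\alpha(\sigma,b,0)$, and to combine this with elementary bounds for the global estimate \eqref{eq:phi_G}. First I would dispose of the centering: since $\alpha>1$, the normalization $n^{1/\alpha}$ in \eqref{eq:G} forces $\E(M)=0$ (otherwise $n^{1-1/\alpha}\E(M)$ diverges and no nondegenerate limit is possible), so that $\phi_G(t)=\cL_M(t)-1=\E(e^{itM}-1-itM)$. Recalling from \citep{samorodnitsky94stable} that the characteristic function of $S_\alpha(\sigma,b,0)$ is $\exp(-|t|^\alpha\phi_{\alpha,b,\sigma}(t))$ for $\alpha\ne1$, assumption \eqref{eq:G} together with L\'evy's continuity theorem gives $\cL_M(t/n^{1/\alpha})^n\to\exp(-|t|^\alpha\phi_{\alpha,b,\sigma}(t))$, whence, using $\log(1+x)=x+O(x^2)$ and $\phi_G(t/n^{1/\alpha})\to0$, one obtains $n\,\phi_G(t/n^{1/\alpha})\to-|t|^\alpha\phi_{\alpha,b,\sigma}(t)$ for every fixed $t$. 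This already identifies the value of any continuous limit of $|t|^{-\alpha}\phi_G(t)$; the substance of the proof is to show that such a continuous limit exists.

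For $1<\alpha<2$, I would invoke the classical characterization of the domain of normal attraction of $S_\alpha$: it is equivalent to the tail balance $\P(M>x)\sim c_+x^{-\alpha}$ and $\P(M<-x)\sim c_-x^{-\alpha}$ as $x\to\infty$, for constants $c_\pm\ge0$ with $c_++c_->0$ determined by $(\alpha,\sigma,b)$. For $t>0$, let $\mu_t$ denote the law of $tM$, so that $\phi_G(t)=\int_\R(e^{iu}-1-iu)\,\mu_t(du)$; the tail balance says precisely that $t^{-\alpha}\mu_t$ converges vaguely on $\R\setminus\{0\}$, as $t\downarrow0$, to the L\'evy measure $\nu$ given by $\alpha c_+u^{-1-\alpha}\,du$ on $(0,\infty)$ and $\alpha c_-|u|^{-1-\alpha}\,du$ on $(-\infty,0)$. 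Passing to the limit in $t^{-\alpha}\phi_G(t)=\int(e^{iu}-1-iu)\,t^{-\alpha}\mu_t(du)$ is then a matter of uniform integrability: near $0$ one uses $|e^{iu}-1-iu|\le u^2/2$ and the identity $t^{-\alpha}\int_{|u|\le1}u^2\,\mu_t(du)=t^{2-\alpha}\E(M^2\indd{|M|\le1/t})$, which stays bounded as $t\downarrow0$ by Karamata's theorem (the truncated second moment $x\mapsto\E(M^2\indd{|M|\le x})$ being regularly varying of index $2-\alpha$); near $\infty$ one uses $|e^{iu}-1-iu|\le2+|u|$ together with the tail bounds and $\E|M|<\infty$ (valid since $\alpha>1$), which keeps $t^{-\alpha}\P(|M|>1/t)$ and $t^{1-\alpha}\E(|M|\indd{|M|>1/t})$ bounded as $t\downarrow0$. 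Hence $\lim_{t\downarrow0}t^{-\alpha}\phi_G(t)$ exists and, by the first paragraph, must equal $-\phi_{\alpha,b,\sigma}(1)$; the case $t\uparrow0$ follows by conjugation, and since $\phi_{\alpha,b,\sigma}(t)$ depends on $t$ only through $\sign(t)$, this is exactly $\phi_G(t)\sim-|t|^\alpha\phi_{\alpha,b,\sigma}(t)$ as $t\to0$.

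For $\alpha=2$ the statement is elementary: $S_2(\sigma,b,0)=\mathcal N(0,2\sigma^2)$, its domain of normal attraction consists of the $M$ (with $\E(M)=0$) satisfying $\E(M^2)=2\sigma^2<\infty$, and the standard expansion $\E(e^{itM})=1-\tfrac12 t^2\,\E(M^2)+o(t^2)$ gives $\phi_G(t)=-\sigma^2t^2+o(t^2)=-|t|^2\phi_{2,b,\sigma}(t)+o(t^2)$, since $\phi_{2,b,\sigma}(t)=\sigma^2$ (because $\tan(\alpha\pi/2)=\tan\pi=0$). For the global bound \eqref{eq:phi_G}: when $\alpha=2$ one has directly $|\phi_G(t)|\le\E|e^{itM}-1-itM|\le\tfrac12 t^2\,\E(M^2)$; when $1<\alpha<2$, the local asymptotic just established gives $|\phi_G(t)|\le C_1|t|^\alpha$ on some $[-\delta,\delta]$ with $\delta\le1$, continuity of the characteristic function gives $|\phi_G(t)|\le C_2\le C_2\delta^{-\alpha}|t|^\alpha$ on $\delta\le|t|\le1$, and $|\phi_G(t)|=|\cL_M(t)-1|\le2\le2|t|^\alpha$ on $|t|\ge1$; taking $C=\max(C_1,C_2\delta^{-\alpha},2,\tfrac12\E(M^2))$ gives \eqref{eq:phi_G} in all cases.

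The main obstacle is the middle step for $1<\alpha<2$: upgrading the ``sequential'' convergence $n\,\phi_G(t/n^{1/\alpha})\to-|t|^\alpha\phi_{\alpha,b,\sigma}(t)$ supplied by \eqref{eq:G} to the genuine asymptotic $\phi_G(t)\sim-|t|^\alpha\phi_{\alpha,b,\sigma}(t)$ as $t\to0$ along arbitrary sequences. This is exactly where the precise (regularly varying, not merely ``up to a slowly varying factor'') description of the tails of $M$ afforded by the domain of \emph{normal} attraction is indispensable, and where the uniform-integrability estimates above are needed to interchange the limit and the integral in $\int(e^{iu}-1-iu)\,t^{-\alpha}\mu_t(du)$. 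Everything else --- the centering reduction, the case $\alpha=2$, and the global bound --- is routine; alternatively, the whole local statement may simply be quoted from the literature on domains of attraction, as is done in \citep{breton09rescaled}.
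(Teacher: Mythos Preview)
The paper does not supply its own proof of this lemma: it is quoted verbatim as Lemma~3.1 of \cite{breton09rescaled} and used as a black box, exactly the alternative you mention in your last sentence. So there is nothing in the paper to compare your argument against.

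That said, your self-contained argument is correct. The reduction $\E(M)=0$ is indeed forced by the normalisation $n^{1/\alpha}$ with $\alpha>1$; the sequential statement $n\,\phi_G(t/n^{1/\alpha})\to-|t|^\alpha\phi_{\alpha,b,\sigma}(t)$ follows from \eqref{eq:G} as you describe; and your upgrade to a genuine limit $t\to0$ via the tail characterisation of the domain of normal attraction, vague convergence of $t^{-\alpha}\mu_t$ to the L\'evy measure, and the Karamata-based uniform integrability near $0$ and near $\infty$, is the standard route and is carried out correctly. The case $\alpha=2$ and the global bound \eqref{eq:phi_G} are handled cleanly. The only point worth making explicit is that the ``uniform smallness'' near $0$ requires not merely boundedness of $t^{2-\alpha}\E(M^2\indd{|M|\le1/t})$ but that the contribution from $|u|\le\varepsilon$ tends to $0$ with $\varepsilon$ uniformly in $t$; this follows from the same Karamata estimate applied with cutoff $\varepsilon/t$ in place of $1/t$, yielding an upper bound proportional to $\varepsilon^{2-\alpha}$.
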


The key ingredients for our generalized random ball model are
the precise continuity properties of the operators $T_r^E$ stated in
the following proposition. Recall that we write 
$v_r = \Leb_d(B_E(0,r)) = r^qv_B$, $r>0$, and for $\gamma>0$, $\|f\|_{_{L^\gamma}}^\gamma=\int_{\R^d}|f(x)|^\gamma dx$.

\begin{Prop}\label{prop:Tr}
(i) For all $\gamma\in [1,2]$, $r>0$, and $f\in\S(\R^d)$,
\begin{equation}\label{majoS}
\|T_r^E f\|_{_{L^\gamma}}\le v_{r}\|f\|_{_{L^\gamma}},
\end{equation}
and
\begin{equation}\label{majo:u}
 \|T_r^E f\|_{_{L^\gamma}}\le v_{r}^{1/\gamma}\|f\|_{_{L^1}}.
\end{equation}
As a consequence, for $\gamma\in (1,2]$ and $\beta\in(q,\gamma q)$, there exists some constant $C>0$ such that
\equh\label{eq:RT0}
\int_{\R_+}\snn{T_r^Ef}_{_{L^\gamma}}^\gamma r^{-1-\beta}dr \leq C\nn f_{_{L^1\cap L^\gamma}}^\gamma,\quad f\in\calS(\Rd),
\eque
with $\nn f_{_{L^1\cap L^\gamma}}:=\nn f_{_{L^1}} \vee \nn f_{_{L^\gamma}}$.

(ii) For all $\gamma\in [1,2]$, $r>1$, and $f\in\S_1(\R^d)$,
\begin{equation}\label{majoS1}
\|T_r^E f\|_{_{L^\gamma}}^\gamma \le C r^{q-a_d}(|\log r|\vee 1)^{\ell_d-1}\nn f_{_{L^1}}^{\gamma-1}\int_\Rd|y||f(y)|d y,
\end{equation}
where $\ell_d\le d$ is the number of eigenvalues of $E$ having the minimal real part $a_d$ (counted with multiplicities).
As a consequence, for $\beta\in(q-a_d,q)$ there exists a constant $C$ such that 
\equh\label{eq:RT1}
\int_{\R_+}\snn{T_r^Ef}_{_{L^\gamma}}^\gamma r^{-1-\beta}dr \leq 
C \nn f_{_{L^1}}^{\gamma-1}\int_\Rd (1+|y|)|f(y)|dy, \quad f\in\calS_1(\Rd).
\eque
\end{Prop}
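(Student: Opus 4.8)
The plan is to prove the three pointwise (in $r$) inequalities \eqref{majoS}, \eqref{majo:u} and \eqref{majoS1} by elementary convolution and interpolation arguments — the last one also invoking the covariogram bound \eqref{eq:symdiff} and a standard norm estimate for $r^{-E}$ — and then to deduce the integrated bounds \eqref{eq:RT0} and \eqref{eq:RT1} by splitting $\int_0^\infty$ at $r=1$ and matching the exponents of the resulting power integrals against the stated ranges of $\beta$.

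For part (i), I would first note, via the translation $y=x+w$, that $T_r^E f(x)=\int_{r^E B}f(x+w)\,dw$, so $T_r^E f$ is $f$ convolved with the indicator of a set of Lebesgue measure $v_r=r^q v_B$. Then \eqref{majoS} is just Minkowski's integral inequality, $\|T_r^E f\|_{_{L^\gamma}}\le\int_{r^E B}\|f(\cdot+w)\|_{_{L^\gamma}}\,dw=v_r\|f\|_{_{L^\gamma}}$; and \eqref{majo:u} follows from the trivial estimates $\|T_r^E f\|_\infty\le\|f\|_{_{L^1}}$ and $\|T_r^E f\|_{_{L^1}}\le v_r\|f\|_{_{L^1}}$ (Fubini), combined through the interpolation $\|T_r^E f\|_{_{L^\gamma}}^\gamma\le\|T_r^E f\|_\infty^{\gamma-1}\|T_r^E f\|_{_{L^1}}$. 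For \eqref{eq:RT0} I would split the integral over $r$: on $(0,1)$ use \eqref{majoS}, so the integrand is at most a constant times $\|f\|_{_{L^\gamma}}^\gamma\, r^{\gamma q-1-\beta}$, integrable exactly because $\beta<\gamma q$; on $(1,\infty)$ use \eqref{majo:u}, so the integrand is at most a constant times $\|f\|_{_{L^1}}^\gamma\, r^{q-1-\beta}$, integrable exactly because $\beta>q$.

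Part (ii) is where the real work lies, and where the constraint $\int_\Rd f=0$ must enter. The idea is that, since $0\in B$, one has $\mathbf{1}_{B_E(x,r)}(0)\int_\Rd f(y)\,dy=0$, hence
\[
T_r^E f(x)=\int_\Rd\bigl(\mathbf{1}_{B_E(x,r)}(y)-\mathbf{1}_{B_E(x,r)}(0)\bigr)f(y)\,dy .
\]
By Fubini, $\|T_r^E f\|_{_{L^1}}\le\int_\Rd|f(y)|\,w_r(y)\,dy$ with $w_r(y)=\int_\Rd|\mathbf{1}_{B_E(x,r)}(y)-\mathbf{1}_{B_E(x,r)}(0)|\,dx$; a translation-and-dilation change of variables identifies $w_r(y)$ with $r^q\,\Leb_d\bigl((B-r^{-E}y)\,\Delta\,B\bigr)$, which by \eqref{eq:symdiff} is $\le C\, r^q\|r^{-E}\|\,|y|$. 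Feeding in the classical matrix-exponential estimate $\|r^{-E}\|\le C\, r^{-a_d}(|\log r|\vee1)^{\ell_d-1}$ valid for $r\ge1$ (see e.g.\ \citet{bierme07operator}) gives $\|T_r^E f\|_{_{L^1}}\le C\, r^{q-a_d}(|\log r|\vee1)^{\ell_d-1}\int_\Rd|y||f(y)|\,dy$, and combining this with $\|T_r^E f\|_\infty\le\|f\|_{_{L^1}}$ through the same interpolation $\|T_r^E f\|_{_{L^\gamma}}^\gamma\le\|T_r^E f\|_\infty^{\gamma-1}\|T_r^E f\|_{_{L^1}}$ yields \eqref{majoS1}. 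Finally I would obtain \eqref{eq:RT1} by splitting at $r=1$ once more: on $(0,1)$ use \eqref{majo:u} (integrand $\le$ const.\ $\cdot\,\|f\|_{_{L^1}}^\gamma\, r^{q-1-\beta}$, integrable since $\beta<q$), on $(1,\infty)$ use \eqref{majoS1} (integrand $\le$ const.\ $\cdot\,\|f\|_{_{L^1}}^{\gamma-1}\bigl(\int_\Rd|y||f(y)|dy\bigr)\, r^{q-a_d-1-\beta}(\log r)^{\ell_d-1}$, integrable since $\beta>q-a_d$, the logarithmic factor being harmless), and bound $\|f\|_{_{L^1}}\le\int_\Rd(1+|y|)|f(y)|\,dy$ in the first piece.

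The main obstacle, and the only genuinely non-routine step, is this cancellation argument in part (ii): one must pick the right base point (the origin, which is legitimate because $0\in B$) so that the relevant symmetric difference is a \emph{bounded} set whose measure is controlled \emph{linearly} in $|r^{-E}y|$ via the finite-perimeter bound \eqref{eq:symdiff}, after which the anisotropic decay rate $\|r^{-E}\|\asymp r^{-a_d}(\log r)^{\ell_d-1}$ is precisely what produces, upon the $r$-integration, the lower threshold $\beta>q-a_d$ appearing in \eqref{eq:betan}. Everything else reduces to bookkeeping with elementary power integrals.
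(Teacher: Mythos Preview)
Your proposal is correct and follows essentially the same architecture as the paper's proof: the same cancellation via $\int f=0$ in part~(ii), the same symmetric-difference estimate through \eqref{eq:symdiff}, the same operator-norm bound $\|r^{-E}\|\le C r^{-a_d}(|\log r|\vee1)^{\ell_d-1}$, and the same splitting at $r=1$ for the integrated bounds.

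The one noteworthy difference is in the elementary estimates of part~(i). For \eqref{majoS} the paper proves the endpoint cases $\gamma=1$ (Fubini) and $\gamma=2$ (Cauchy--Schwarz) and then invokes Riesz--Thorin interpolation, whereas you get it in one line from Minkowski's integral inequality. For \eqref{majo:u} the paper interpolates between $L^1$ and $L^2$ via H\"older, while you interpolate between $L^1$ and $L^\infty$ via $\|g\|_{L^\gamma}^\gamma\le\|g\|_\infty^{\gamma-1}\|g\|_{L^1}$. Your routes are strictly more elementary and avoid both Riesz--Thorin and the auxiliary $L^2$ computation; the paper's approach buys nothing extra here. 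For \eqref{majoS1} the paper applies Jensen/H\"older directly inside the $L^\gamma$ integral rather than passing through $L^1$ and $L^\infty$ separately, but since $\tilde K_r^E$ takes values in $\{-1,0,1\}$ the two computations are equivalent. One small remark: your parenthetical that subtracting $\mathbf 1_{B_E(x,r)}(0)$ is ``legitimate because $0\in B$'' is not the real reason --- any constant in $y$ may be subtracted because $\int f=0$; the assumption $0\in B$ plays no role at this step.
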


\begin{proof}
(i) Note that
$$\|T_r^E f\|_{_{L^1}}:=\int_{\R^d}|T_r^E f(x)|dx\le \int_{\R^d}\int_{\R^d} K_r^E(x,y)|f(y)|dydx,$$
with $K_r^E(x,y)=\mathbf{1}_{B_E(x,r)}(y)$ by \eqref{kernel}.
Hence, by Fubini's theorem,
\begin{equation}\label{L1L1}
\|T_r^E f\|_{_{L^1}}\le v_{r}\|f\|_{_{L^1}}.
\end{equation}
Moreover, 
\[
\|T_r^E f\|_{_{L^2}}^2=\int_{\R^d}|T_r^E f(x)|^2dx\le \int_\Rd v_r\int_\Rd K_r^E(x,y)|f(y)|^2dydx =  v_{r}^2\|f\|_{_{L^2}}^2,
\]
where we first applied the Cauchy--Schwarz inequality, and Fubini's theorem at the end. 
According to the Riesz--Thorin interpolation theorem (see \cite{Bergh1976}),
 combining this with \eqref{L1L1}, we get \eqref{majoS}.
Moreover, since by the Cauchy--Schwarz inequality we also have  
\[
\|T_r^E f\|_{_{L^2}}^2  \leq \int_\Rd\int _\Rd K_r^E(x,y)|f(y)|dy \nn f_{_{L^1}}dx = v_r\|f\|_{_{L^1}}^2,
\] it follows by H\"older's inequality
that, for $p >1$ such that $\gamma=1/p+2(1-1/p)$,
\[
\|T_r^E f\|_{_{L^\gamma}}^\gamma  \le \|T_r^E f\|_{_{L^1}}^{1/p}\|T_r^E f\|_{_{L^2}}^{2(1-1/p)} 
\leq v_{r}^{1/p}\nn f_{_{L^1}}^{1/p}v_{r}^{1-1/p}\nn f_{_{L^1}}^{2(1-1/p)} 
= v_{r}\|f\|_{_{L^1}}^\gamma.
\]
Since $v_{r}=r^q v_B$ with $q=\mbox{tr}(E)$ we can conclude that for
$\beta \in (q,\gamma q)$, by~\eqref{majoS} and~\eqref{majo:u}, 
\[
\int_{\R_+}\|T_r^E f\|_{_{L^\gamma}}^\gamma r^{-\beta-1}dr\leq \pp{(v_B\nn f_{_{L^1}}^\gamma)\vee(v_B^\gamma\nn f_{_{L^\gamma}}^\gamma)}\int_{\R_+} r^{q-\beta-1} \wedge r^{\gamma q-\beta-1}dr.
\]
Therefore we have proved~\eqref{eq:RT0}. 

\medskip

\noindent (ii) The assumption that $f\in \cS_1(\R^d)$ implies that $\int_{\R^d}f(z)dz=0$ so that
$$T_r^E f(x)=\int_{\R^d}\tilde{K}_r^E(x,y)f(y)dy,$$
with $\tilde{K}_r^E(x,y)=\mathbf{1}_{B_E(x,r)}(y)-\mathbf{1}_{B_E(x,r)}(0)$. Then, by H\"older's inequality,
one has
\begin{align*}
\|T_r^E f\|_{_{L^\gamma}}^\gamma 
&=\int_\Rd\abs{\int_\Rd\pp{\ind_{B_E(y,r)}(x) - \ind_{B_E(0,r)}(x)}f(y)d y}^\gamma d x\\
&\leq\|f\|_{_{L^1}}^{\gamma-1}\int_\Rd\pp{\int_\Rd\abs{\ind_{B_E(y,r)}(x) - \ind_{B_E(0,r)}(x)}^\gamma|f(y)|d y} dx.
\end{align*}
Also,
\[
\int_\Rd\abs{\ind_{B_E(y,r)}(x) - \ind_{B_E(0,r)}(x)}^\gamma d x = \Leb_d(B_E(y,r)\triangle B_E(0,r)) 
= r^qh(r^{-E}y) 
\]
with $h(z) = \Leb_d(B_E(0,1)\triangle B_E(z,1))=\Leb_d(B\triangle (z+B))$, that does not depend on $E$. 
By \eqref{eq:symdiff}, $h(y)\leq C|y|$ for all $y\in\Rd$ and
it follows that,
\equh\label{eq:S1}
\|T_r^E f\|_{_{L^\gamma}}^\gamma\leq C\|f\|_{_{L^1}}^{\gamma-1}\int_\Rd r^q|r^{-E}y||f(y)| d y.
\eque
Recall that according to the Jordan decomposition theorem, 
given $E$, there exists an invertible matrix $P$ such that $D = P\inv E P$ has the real canonical form
\[
\pp{
\begin{array}{ccc}
J_1& & 0\\
& \ddots & \\
0 & & J_p
\end{array}},
\]
where $p$ corresponds to the number of distinct real parts of eigenvalues 
and each block matrix $J$ is either
\begin{itemize}
\item [(i)] a Jordan cell matrix of size $\ell$ 
\[
\pp{
\begin{array}{cccc}
a& 0 & & 0\\
1 & a & \ddots & \\
& \ddots & \ddots & 0\\
0 & & 1 & a
\end{array}},
\]
with $a$ a real eigenvalue of $E$, or 
\item [(ii)] a $2\ell\times2\ell$ matrix in form of
\[
\pp{
\begin{array}{cccc}
\Lambda &  & & 0\\
I_2 & \Lambda & & \\
& \ddots & \ddots & \\
0 & & I_2 & \Lambda
\end{array}}
\qmwith 
\Lambda = \pp{\begin{array}{cc}a & b\\ b& a\end{array}} \mand I_2 = \pp{\begin{array}{cc}
1 & 0\\0&1\end{array}},
\]
with $a\pm ib$ ($b\neq 0$) being complex conjugated eigenvalues of $E$. 
\end{itemize}
In either case, for the subordinated norm $\nn\cdot$
 of the Euclidean norm on $\R^d$, for each block $J$ with the corresponding real part of eigenvalue denoted by $a$, it is shown in \citep[Lemma 3.2]{bierme09holder} that 
\[
r^a \leq \nn{r^J}\leq \sqrt{2\ell} e r^a (|\log r|\vee 1)^{\ell-1}, \mfa r>0.
\]
(This is slightly different from \citep[Lemma 3.2]{bierme09holder}, but can be easily established by following the proof carefully.)
Recall that it is assumed that the real parts of eigenvalues of $E$ satisfy $a_1\geq \cdots\geq a_d>0$. Let $\ell_d$ be the size of the Jordan block associated with $a_d$ and note that the other Jordan blocks, if they exist, are associated with a strictly greater real part.
 Then, there exists a constant $C>0$, such that
\[
\nn{r^E} \leq C r^{a_d}(|\log r|\vee 1)^{\ell_d-1}, \mfa r\in(0,1).
\]
Now, it follows from~\eqref{eq:S1} that for $f\in \cS_1(\R^d)$ one has for $r>1$,
\[
\|T_r^E f\|_{_{L^\gamma}}^\gamma \le C r^{q-a_d}(|\log r|\vee 1)^{\ell_d-1}\nn f_{_{L^1}}^{\gamma-1}\int_\Rd |yf(y)|dy.
\]
Hence, for $\beta\in(q-a_d,q)$, $f\in\calS_1(\Rd)$, combining the above inequality for $r>1$ with \eqref{majo:u} for $r\le 1$, we obtain
\begin{align*}
\int_{\R_+}\snn{T_r^Ef}_{_{L^\gamma}}^\gamma r^{-1-\beta}dr \leq 
& \,C\pp{\nn f_{_{L^1}}^{\gamma-1}\int_\Rd (1+|y|)|f(y)|dy} \\
& \times\int_{\R_+}r^{-1-\beta+q}\wedge \pp{r^{-1-\beta+q-a_d}(|\log r|\vee 1)^{\ell_d-1}}dr,
\end{align*}
which proves \eqref{eq:RT1}. 
\end{proof}

\subsection{Dense regime}

\begin{proof}[Proof of Proposition~\ref{Prop:Zalpha}]
First, the stochastic integral $Z_{\alpha,\beta}^E(f)$ in \eqref{Zalpha} is well-defined as soon as 
\begin{equation*}
\int_\RdRp |T_r^Ef(x)|^\alpha r^{-1-\beta}d rd x= \int_{\R_+}\snn {T_r^Ef}_{_{L^\alpha}}^\alpha r^{-1-\beta} dr<\infty
\end{equation*}
and this condition follows from Proposition~\ref{prop:Tr}, with $\gamma=\alpha$, $\beta$, $n$ as in ~\eqref{eq:betan}.
It is well known (see \cite[Chap.~3]{samorodnitsky94stable}) that the characteristic functional $\cL_{Z_{\alpha,\beta}^E}$ of $Z_{\alpha,\beta}^E$ on $\cS_n(\R^d)$ is given by \eqref{eq:charZalpha}.
Now, according to Theorem~\ref{thm:version}, to prove the existence of a generalized-random-field version of $Z_{\alpha,\beta}^E$, it suffices to prove that $\cL_{Z_{\alpha,\beta}^E}$ is continuous on $\cS_n(\R^d)$, that is, for all $\{f_k\}_{k\in\N}$ and $f$ in $\calS_n(\Rd)$ such that $f_k\rightarrow f$ in $\cS_n(\R^d)$, $\lim_{k\to\infty}\cL_{Z_{\alpha,\beta}^E}(f_k)=\cL_{Z_{\alpha,\beta}^E}(f)$. 
This shall follow from the convergence in distribution of the random variables $Z_{\alpha,\beta}^E(f_k-f)$ to $0$ as $k\to\infty$, or equivalently from
\[
\lim_{k\to\infty}\int_{\Rd\times\R_+}\snn {T_r^E(f_k-f)}_{_{L^\alpha}}^\alpha r^{-1-\beta}dr = 0.
\] 
By \eqref{eq:RT0} and \eqref{eq:RT1} of Proposition~\ref{prop:Tr} with $\gamma=\alpha$, this is straightforward, since $f_k-f\rightarrow 0$ in $\calS_n(\Rd)$
clearly implies that the upper bounds also tend to $0$.
\end{proof}

\begin{proof}[Proof of Theorem~\ref{thm:tightness}]
Note that, by Theorem~\ref{thm:levy},
the result follows from the pointwise convergence of the characteristic functional. Further, by \eqref{PsiY}, we clearly have for $f\in\calS_n(\Rd)$,
$$
\cL_{n_1(\rho)^{-1/\alpha}Y_\rho^E}(f)=
 \exp\pp{\int_{\Rd\times\R_+}\phi_G\pp{\frac{T_r^Ef(x)}{n_1(\rho)^{1/\alpha}}}\lambda(\rho)dxF_\rho(dr)}.
$$
Since $n_1(\rho)\to\infty$, by Lemma \ref{lem:G},
$$
\phi_G\pp{\frac{T_r^Ef(x)}{n_1(\rho)^{1/\alpha}}}\sim \frac1{n_1(\rho)}|T_r^Ef(x)|^\alpha\phi_{\alpha,b,\sigma}(T_r^Ef(x)),\quad\text{as }\rho\to0^{\beta-q},
$$
for $\phi_{\alpha,b,\sigma}$ defined in \eqref{def:phi}.
Hence, under \eqref{eq:f}, one can apply Lemma \ref{lem:g} to prove that
$$\cL_{n_1(\rho)^{-1/\alpha}Y_\rho^E}(f)\rightarrow \cL_{Z_{\alpha,\beta}^E}(f).$$
Indeed, recall the uniform bound \eqref{eq:phi_G} on $\phi_G$ and, thanks to Proposition~\ref{prop:Tr}, the fact that
for $n=0$,
\[
\|T_r^Ef\|_{_{L^\alpha}}^\alpha \le C_E\|f\|_{_{L^1\cap L^\alpha}}^\alpha (r^q\wedge r^{\alpha q}), 
\]
and for $n=1$,
\[
\|T_r^Ef\|_{_{L^\alpha}}^\alpha \le  C_E\|f\|_{_{L^1}}^{\alpha-1}\left(\int_{\R^d}(1+|y|)|f(y)|dy\right) (r^q\wedge r^{q-a_p}|\log(r)|^{d-1}).
\]
We can then apply Lemma~\ref{lem:g} with $g_\rho(r)=n_1(\rho)\int_{\R^d}\phi_G(n_1(\rho)^{-1/\alpha}T_r^Ef(x))dx$ to both cases $\beta\in(q,\alpha q)$ and $\beta\in(q-a_d,q)$.
\end{proof}

\subsection{Intermediate regime}
\begin{proof}[Proof of Proposition~\ref{prop:Ja}]
Recall that the Poisson integral $J_{a,\alpha,\beta}^E(f)$ in \eqref{eq:Ja} is well-defined as soon as 
\[
\int_{\R^d\times\R_+\times\R_+} \pp{|m T_{r,a}^E f(x)|\wedge|m T_{r,a}^E f(x)|^2} r^{-1-\beta}dxdrG(dm)<\infty.
\]
Let us remark that
$$|m T_{r,a}^E f(x)|\wedge|m T_{r,a}^E f(x)|^2\le |m T_{r,a}^E f(x)|^\gamma,$$
for any $\gamma\in [1,2]$. Hence, for 
$\beta\in(q-a_d,q)\cup (q,\alpha q)$, 
choosing $\gamma\in [1,\alpha)$ such that
$\beta\in (q-a_d,\gamma q)$, one has
$$\int_{\R^d\times\R_+\times\R_+} |m T_{r,a}^E f(x)|^\gamma\, r^{-1-\beta}dxdrG(dm)\le \mathbb{E}(|M|^\gamma)\int_{\R^+}\|T_{r,a}^E f\|_{_{L^\gamma}}^\gamma r^{-1-\beta}dr<\infty,$$
in view of Proposition \ref{prop:Tr}, since $\|T_{r,a}^E f\|_{_{L^\gamma}}^\gamma=a^q\|T_{r/a}^E f\|_{_{L^\gamma}}^\gamma$ (see \eqref{eq:Tra}).
It follows that the Poisson integral $J_{a,\alpha,\beta}^E(f)$ is well-defined for all $f \in\cS_n(\R^d)$ and the characteristic functional $\cL_{J_{a,\alpha,\beta}^E}$ of $J_{a,\alpha,\beta}^E$ is given by \eqref{characJa}.

Again, to show  the existence of a version of $J_{a,\alpha,\beta}^E$ with values in $\S_n'(\R^d)$, using Theorem~\ref{thm:version}, it is sufficient to prove that the characteristic functional $\cL_{J_{a,\alpha,\beta}^E}$ is continuous on $\S_n(\R^d)$.
Let $\beta\in(q-a_d,q)\cup (q,\alpha q)$ and assume that $f_k\to 0$ in $\S_n(\R^d)$. We will show that $J_{a,\alpha,\beta}^E(f_k)$ converges in $L^\gamma$ to $0$, which is sufficient to prove the continuity of $\cL_{J_{a,\alpha,\beta}^E}$. Actually, following the proof of Proposition 3.1 in  \cite{breton11functional}, we can bound $\gamma$-moments of the real 
random variable $J_{a,\alpha,\beta}^E(f)$ for $f \in \S_n(\R^d)$. Since  $J_{a,\alpha,\beta}^E(f)$ is centered, for $\gamma\in [1,\alpha)$, following \cite[p.461]{Gaigalas06} and using Lemma~2 and Lemma~4 of 
\cite{Esseen65}, 
$$\esp\pp{|J_{a,\alpha,\beta}^E(f)|^\gamma}
  \le  A(\gamma)\int_0^\infty\pp{1-\abs{\cL_{J_{a,\alpha,\beta}^E}(\theta f)}^2}\theta^{-1-\gamma}d \theta,$$
with $A(\gamma) := (\int_0^\infty(1-\cos x)x^{-1-\gamma} d x)\inv<\infty$. But
\[
\abs{\cL_{J_{a,\alpha,\beta}^E}(\theta f)}\ge \exp\left(-C|\theta|^\alpha\int_{\Rd\times\R^+}|T_{r,a}^E f(x)|^\alpha C_\beta r^{-1-\beta} dr dx\right),
\]
using the upper bound on $|\phi_G|$ given \eqref{eq:phi_G}.
It follows that for $\gamma\in[1,\alpha)$ one has
\begin{align*}
\esp\pp{|J_{a,\alpha,\beta}^E(f)|^\gamma}  &\le  A(\gamma)\int_0^\infty(1-\exp\left(-2C|\theta|^\alpha\int_{ \R^+}\|T_{r,a}^Ef\|_{_{L^\alpha}}^\alpha C_\beta r^{-1-\beta} dr \right)\theta^{-1-\gamma}d \theta\\
&\le   A(\gamma) A(\alpha,\gamma)\left(C \int_{ \R^+}\|T_{r,a}^Ef\|_{_{L^\alpha}}^\alpha C_\beta r^{-1-\beta} dr\right)^{\gamma/\alpha},
\end{align*}
with $ A(\alpha,\gamma):=\int_0^\infty(1-\exp(-s^\alpha))s^{-1-\gamma}ds<\infty$. Hence the result follows from Proposition~\ref{prop:Tr}
since $\|T_{r,a}^Ef\|_{_{L^\alpha}}^\alpha=a^q\|T_{r/a}^Ef\|_{_{L^\alpha}}^\alpha$.
\end{proof}

\begin{proof}[Proof of Theorem~\ref{thm:tightness2}]
Again, by Theorem \ref{thm:levy}, the result follows from the convergence of the characteristic functionals. Observe that, 
\begin{align*}
\cL_{J_{a,\alpha,\beta}^E}(f)
& = \exp\ccbb{\int_{\Rd\times\R_+}\phi_G(T_{r,a}^E f(x))C_\beta r^{-1-\beta} dr dx}\\
& = \exp\ccbb{C_\beta\int_{\Rd\times\R_+}\phi_G(T_{s}^E f(y))a^{q-\beta}s^{-1-\beta}\d s dy}
\end{align*}
by the changes of variables $y=a^{-E}x$ and  $s=r/a$. 
The rest of the proof can be done similarly as for Theorem~\ref{thm:tightness}, starting from \eqref{PsiY} and applying Lemma~\ref{lem:g} with $g(r)=g_\rho(r)=\int_{\R^d}\phi_G(T_r^Ef(x))dx$ and the help of Proposition~\ref{prop:Tr}.
\end{proof}

\subsection{Sparse regime}

\begin{proof}[Proof of Proposition~\ref{prop:Zgamma}]
Using Theorem~\ref{thm:version}, it is sufficient to prove that $Z\topp1_\gamma(f_k)$ converges in distribution to $0$ when $f_k\to 0$ in $\S(\R^d)$. This last assertion is obvious since convergence in $\S(\R^d)$ implies convergence in $L^\gamma(\R^d)$.
\end{proof}

To prove Theorem~\ref{thm:3}, we consider the maximal function $f^*$ associated to a function $f$ of $\S(\R^d)$,
\[
 f^*(x):=\sup_{r>0}\frac{1}{r^q v_B}\int \ind_{B_E(x,r)}(y)|f(y)| dy, \quad x\in\R^d,
\]
and we shall need the following lemma.
\begin{Lem}\label{lem:maxfunct}
For all $f\in\S(\R^d)$ and all $\alpha>1$, $f^*\in L^\alpha(\R^d)$.
\end{Lem}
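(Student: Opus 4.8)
The plan is to compare $f^*$ with the classical Hardy--Littlewood maximal function, which we know is bounded on $L^\alpha(\R^d)$ for $\alpha>1$. The only subtlety is that the averages defining $f^*$ are taken over the anisotropic sets $B_E(x,r)=x+r^E B$ rather than over Euclidean balls, so the first task is to control the shape distortion introduced by the matrix exponential $r^E$. Recall the subordinated-norm estimates used already in the proof of Proposition~\ref{prop:Tr}: there are constants such that $\nn{r^E}\le C r^{a_d}(|\log r|\vee 1)^{\ell_d-1}$ for $r\in(0,1)$ and, applying the same bound to $-E$, $\nn{r^{-E}}\le C r^{-a_1}(|\log r|\vee 1)^{\ell_1-1}$; in particular for every $r>0$ one has $r^E B\subset \{z : |z|\le \varphi(r)\}$ for an explicit function $\varphi$ with $\varphi(r)\to 0$ as $r\to 0$ and $\varphi(r)\to\infty$ as $r\to\infty$, and moreover $r^E B$ contains a Euclidean ball of radius $\psi(r)$ around the origin for a comparable function $\psi$ (since $0\in B$ and $B$ has positive Lebesgue measure, $B$ contains some Euclidean ball $B(z_0,\epsilon_0)$, hence $r^E B\supset r^E z_0 + r^E B(0,\epsilon_0)$, and $r^E B(0,\epsilon_0)\supset B(0,\epsilon_0/\nn{r^{-E}})$).

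Next I would argue as follows. Fix $x$ and $r>0$. Since $B_E(x,r)=x+r^E B$ is contained in the Euclidean ball $B(x,\varphi(r))$, we get
\[
\frac{1}{r^q v_B}\int \ind_{B_E(x,r)}(y)|f(y)|\,dy
\le \frac{1}{r^q v_B}\int_{B(x,\varphi(r))}|f(y)|\,dy
= \frac{\Leb_d(B(0,\varphi(r)))}{r^q v_B}\cdot \frac{1}{\Leb_d(B(0,\varphi(r)))}\int_{B(x,\varphi(r))}|f(y)|\,dy .
\]
Thus $f^*(x)\le \big(\sup_{r>0}\tfrac{\Leb_d(B(0,\varphi(r)))}{r^q v_B}\big)\, (Mf)(x)$, where $M$ is the Hardy--Littlewood maximal operator. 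The point is that the supremum of the volume ratio $\Leb_d(B(0,\varphi(r)))/(r^q v_B)=c_d\varphi(r)^d/(r^q v_B)$ is \emph{not} finite in general because $\varphi(r)$ and $r^{q/d}$ are of different orders. So a single global comparison is too lossy, and one must split the supremum over $r$ according to whether the relevant scale is small or large, and in each regime use the two-sided control $\psi(r)\le \nn{r^{-E}}^{-1}\epsilon_0$, $\varphi(r)\le C\nn{r^E}$ together with $\Leb_d(r^E B)=r^q v_B$. Concretely, for each $x$ one compares the $B_E(x,r)$-average to the average over a Euclidean ball of radius comparable to $\psi(r)$ \emph{from inside} when that is efficient, and from outside via $\varphi(r)$ otherwise; in both sub-cases the volume ratio between $r^E B$ and the chosen Euclidean ball is bounded by $C(|\log r|\vee1)^{d(\ell_1+\ell_d)}$ or the like. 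Since such logarithmic factors are harmless — one may absorb them by replacing the exponents $a_1,a_d$ by slightly larger/smaller values and re-summing, or simply note $\sup_{r}$ of a bounded-times-logarithmic quantity over the dyadic range where it is active is finite — we obtain a pointwise bound $f^*(x)\le C\,(Mf)(x)$ with $C$ depending only on $d$, $E$ and $B$. Applying the Hardy--Littlewood maximal theorem, $\snn{f^*}_{_{L^\alpha}}\le C\snn{Mf}_{_{L^\alpha}}\le C_\alpha\snn{f}_{_{L^\alpha}}<\infty$ since $f\in\S(\R^d)\subset L^\alpha(\R^d)$.

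The main obstacle is precisely the anisotropy of $r^E B$: a naive domination by one Euclidean ball loses too much because the eccentricity of $r^E B$ grows (polynomially, up to logs) as $r\to 0$ and $r\to\infty$, so the crude volume-ratio constant diverges. The fix is the scale-by-scale comparison sketched above, exploiting that $r^E B$ is sandwiched between two Euclidean balls whose radii, although far apart, differ only by a factor that is polynomial in $r$ with the \emph{same} overall homogeneity degree once one tracks $\nn{r^E}$ and $\nn{r^{-E}}$ carefully; this is exactly the kind of estimate already invoked via \citep[Lemma 3.2]{bierme09holder} in the proof of Proposition~\ref{prop:Tr}, so no new machinery is needed. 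Everything else — the reduction to $M$ and the invocation of the maximal theorem — is routine.
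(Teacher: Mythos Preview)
Your proposal has a genuine gap: the claim that the volume ratio between $r^EB$ and a suitably chosen Euclidean ball is bounded by a logarithmic factor in $r$ is false in general. The estimates from \citep[Lemma~3.2]{bierme09holder} say that, \emph{within a single Jordan block} with real eigenvalue part $a$, the norm $\nn{r^J}$ differs from $r^a$ only by powers of $|\log r|$; they do \emph{not} compare blocks with different eigenvalues. If $E$ has at least two distinct real parts $a_1>a_d$, then the eccentricity of $r^EB$ is of order $\nn{r^E}\nn{r^{-E}}\sim r^{a_d-a_1}$ as $r\to0$ (up to logs), which is polynomially large. Concretely, for $E={\rm diag}(2,1)$ on $\R^2$ one has $q=3$, and for small $r$ the smallest Euclidean ball containing $r^EB$ has volume $\sim r^{2}$ while $\Leb_d(r^EB)=r^3$, so the ratio is $\sim r^{-1}$; the largest inscribed Euclidean ball fares no better. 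Hence no pointwise bound $f^*\le C\,Mf$ against the Euclidean Hardy--Littlewood maximal function can be obtained this way, and the ``scale-by-scale comparison'' you sketch cannot close.

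The remedy, and this is what the paper does, is to abandon Euclidean balls altogether and work in a quasi-metric adapted to $E$. Using the polar-type decomposition of \citet[Lemma~6.1.5]{meerschaert04limit} and \citet[Lemma~2.2]{bierme07operator}, one defines a radial function $\tau$ with $\tau(r^E\theta)=r$ on the unit sphere and obtains a quasi-distance $\delta(x,y)=\tau(y-x)$ whose balls $C_E(x,r)=x+r^EC$ satisfy $\Leb_d(C_E(x,r))=r^qv_C$. Since $B$ is bounded one has $B\subset C$ after rescaling, so $B_E(x,r)\subset C_E(x,r)$ and the volume ratio is the constant $v_C/v_B$; thus $f^*$ is dominated pointwise by a constant times the maximal function associated with the quasi-metric balls $C_E(x,r)$. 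The latter is bounded on $L^\alpha$ for $\alpha>1$ by the maximal theorem for spaces of homogeneous type \citep[Theorem~1 and Example~2.4]{stein93harmonic}. In short, the missing idea is that the right comparison family is not Euclidean balls but the $E$-homogeneous balls $C_E(x,r)$, for which the doubling property and the maximal inequality come from the general Stein theory rather than the classical Hardy--Littlewood theorem.
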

\begin{proof}
By Lemma~6.1.5 in \citet{meerschaert04limit}, there exists a norm $\|\cdot\|_0$ on $\R^d$ such that the mapping $(0,\infty)\times\{x\in\R^d\mid\|x\|_0=1\}\to \R^d\setminus\{0\}$, $(t,\theta)\mapsto t^E\theta$, is a homeomorphism. Further, the function $t\mapsto \|t^E x\|_0$ is increasing for all $x\in\R^d$. Thus, any $x\in\R^d\setminus\{0\}$ can be uniquely written as $x=\tau(x)^E\theta(x)$ with $\tau(x)>0$ and $\|\theta(x)\|_0=1$. The function $\tau$ is a continuous function that can be extended to $\R^d$ by setting $\tau(0)=0$.
By Lemma~2.2 in \citet{bierme07operator}, one can find $\kappa\ge 1$ such that
\begin{equation}\label{quasi:dist}
\tau(x+y)\le \kappa\left(\tau(x)+\tau(y)\right).
\end{equation}
Therefore we can introduce the function $\delta(x,y)=\tau(y-x)$, $x,y\in\R^d$, which is a quasi-distance on $\R^d$.
We also introduce the sets
\equh\label{eq:CE}
 C_E(x,r)=\{y\in\R^d\mid\delta(x,y)<r\}, \quad r>0.
\eque
Since $B$ is a bounded subset of $\R^d$, we can find a real $r_0>0$ such that $B\subset C_E(0,r_0)$.
With no loss of generality we assume that $r_0=1$ and we denote $C:=C_E(0,1)$. Thus $C_E(x,r)=x+r^EC$ for all $x\in\R^d$ and $r>0$, and $B_E(x,r)\subset C_E(x,r)$. We infer that for all $x\in\R^d$, 
\[
 f^*(x)\le \frac{v_C}{v_B} \sup_{r>0}\frac{1}{r^q v_C}\int \ind_{C_E(x,r)}(y)|f(y)| dy.
\]
The desired result is now a consequence of Theorem~1 and Example~2.4 in \citet{stein93harmonic}. 
\end{proof}

\begin{proof}[Proof of Theorem~\ref{thm:3}]
By Theorem~\ref{thm:levy}, it is sufficient to prove the convergence of the characteristic functionals.
The characteristic functional of $n_2(\rho)\inv Y_\rho^E$ is given by, recalling that $F_\rho(dr)= F(dr/\rho)$,
\begin{align*}
\cL_{n_2(\rho)\inv Y_\rho^E}(f)
&=\exp\ccbb{\int_\RdRp\lambda(\rho)\phi_G\pp{\frac{T_r^Ef(x)}{n_2(\rho)}} dxF_\rho( dr)}\\
&= \exp\ccbb{\int_{\Rd\times\R_+}\lambda(\rho)\phi_G\pp{\frac{T_{n_2(\rho)^{1/q}r}^Ef(x)}{n_2(\rho)}} dxF_{\rho n_2(\rho)^{-1/q}}( dr)}.
\end{align*}
We shall show that
\begin{multline}\label{eq:2.16}
\int_{\Rd\times\R_+}\lambda(\rho)\phi_G\pp{\frac{T_{n_2(\rho)^{1/q}r}^Ef(x)}{n_2(\rho)}} dxF_{\rho n_2(\rho)^{-1/q}}( dr)\\
\rightarrow C_\beta \int_{\Rd\times\R_+}\phi_G(f(x)v_B r^q)r^{-1-\beta} dr dx\quad\mmas \rho\to0^{\beta -q} \mbox{ and } \lambda(\rho)\to0^{q-\beta}.
\end{multline}
From this, we infer that
\[
 \cL_{n_2(\rho)\inv Y_\rho^E}(f)\to  \exp\ccbb{C_\beta\int_{\Rd\times\R_+}\phi_G(f(x)v_B r^q)r^{-1-\beta} dr dx}=\calL_{Z_\gamma^{(1)}}(f),
\]
for $\calL_{Z_\gamma^{(1)}}(f)$ given in \eqref{Zgammacharac}, which completes the proof. The last equality above is obtained by following the same lines as in \citep[pages 3650--3651]{breton09rescaled}.

To prove \eqref{eq:2.16}, recalling that $\lambda(\rho)\rho^\beta n_2(\rho)^{-\beta/q}=1$, it suffices to check the conditions of Lemma~\ref{lem:g} for
\[
g_\rho(r) := \int_\Rd\phi_G\pp{\frac{ T_{n_2(\rho)^{1/q}r}^Ef(x)}{n_2(\rho)}} dx \quad\text{ and }\quad
g(r) := \int_\Rd\phi_G(f(x)v_Br^q) dx.
\]
First, remark that for $f\in\S_n(\R^d)$, 
\[
\frac{ T_{n_2(\rho)^{1/q}r}^Ef(x)}{n_2(\rho)} \underset{\rho\rightarrow 0^{\beta-q}}{\longrightarrow }v_Br^qf(x)
\]
for $ dx$-almost all $x$, so that
\[
\phi_G\left(\frac{ T_{n_2(\rho)^{1/q}r}^Ef(x)}{n_2(\rho)}\right)\underset{\rho\rightarrow 0^{\beta-q}}{\longrightarrow }\phi_G(v_Br^qf(x))
\]
for $ dx$-almost all $x$ by continuity of $\phi_G$. But, by Lemma~\ref{lem:G},
\[
\left|\phi_G\left(\frac{ T_{n_2(\rho)^{1/q}r}^Ef(x)}{n_2(\rho)}\right)\right|\le 
C\left|\frac{ T_{n_2(\rho)^{1/q}r}^Ef(x)}{n_2(\rho)}\right|^\alpha
\le C(v_Br^q)^\alpha f^*(x)^\alpha.
\]
Since $f^*$ belongs to $L^\alpha(\Rd)$ by Lemma~\ref{lem:maxfunct}, Condition~\eqref{cond:limg} follows by Lebesgue's theorem.

Next, for Condition \eqref{cond:boundg}, we deal with the cases $n=0$ and $n=1$ separately.
Now, since $|\phi_G(u)|\leq C(|u|\wedge|u|^\alpha)$ 
and $f\in L^1(\Rd)\cap L^\alpha(\Rd)$,
\[
|g(r)|\leq C\int_{\Rd}|f(x)v_Br^q|\wedge |f(x)v_Br^q|^\alpha dx \leq C(\|f\|_{_{L^1}}v_B \vee \|f\|_{_{L^\alpha}}^\alpha v_B^\alpha) (r^q\wedge r^{\alpha q}).
\]
This establishes Condition~\eqref{cond:boundg} for $\beta \in (q,\alpha q)$ and $n=0$ with $\beta_- = q$ and $\beta_+ = \alpha q$.
Next, when $f\in \calS_1(\R)$, remark that
$$g(r)=\int_\Rd\phi_G(f(x)v_Br^q) dx=\int_\Rd\tilde{\phi}_G(f(x)v_Br^q) dx,$$
with $\tilde{\phi}_G(u)=\int (e^{imu}-1)G(dm)$ so that now $|\tilde{\phi}_G(u)|\le C(1 \wedge |u|^\delta)$ for any $\delta\in (0,1]$.
Hence 
\[
|g(r)|\leq Cv_B^\delta \|f\|^\delta_{_{L^\delta}}r^{q\delta}.
\]
Choosing $\delta=q/(q+a_d)\in (0,1)$ and $\delta=1$ respectively, we infer that for $n=1$, Condition \eqref{cond:boundg} holds for $\beta\in (q^2/(q+a_d),q)$ with $\beta_- = q^2/(q+a_d)$ and $\beta_+ = q$, respectively.

It remains to prove that \eqref{cond:boundgrho} holds. We first consider $\beta\in(q,\alpha q)$. 
Using $|\phi_G(u)|\leq C|u|$ and \eqref{majo:u},
\equh\label{eq:beta=q}
\left|g_\rho(r)\right|  \le  C\frac{1}{n_2(\rho)} \|T_{n_2(\rho)^{1/q}r}^Ef\|_{_{L^1}}
\le  C\|f\|_{_{L^1}}r^q.
\eque
Then, using $|\phi_G(u)|\leq C|u|^\alpha$, we can write 
\[
\left|g_\rho(r)\right|
\le Cr^{\alpha q}\int_{\R^d} \left|\frac{T_{n_2(\rho)^{1/q}r}^Ef(x)}{n_2(\rho) r^q}\right|^\alpha dx
\le C\|f^*\|_{_{L^\alpha}}^\alpha r^{\alpha q},
\]
that finishes to prove \eqref{cond:boundgrho} when $\beta\in (q,\alpha q)$.
Finally, when $\beta\in (q^2/(q+a_d),q)$ and  $f \in\S_1(\R^d)$, we write
$g_\rho=g_\rho^{(1)}+g_\rho^{(2)}$, with
\[
g_\rho^{(1)}:=\int_{\tau(x)\le 2\kappa n_2(\rho)^{1/q}r}\tilde{\phi}_G\left(\frac{T_{n_2(\rho)^{1/q}r}^Ef(x)}{n_2(\rho)}\right) dx
\]
and
\[
g_\rho^{(2)}:=\int_{\tau(x)> 2\kappa n_2(\rho)^{1/q}r}\tilde{\phi}_G\left(\frac{T_{n_2(\rho)^{1/q}r}^Ef(x)}{n_2(\rho)}\right) dx,
\]
where $\kappa\ge 1$ comes from the quasi-triangular inequality given in \eqref{quasi:dist}. With this choice
we may write for any $z \in (n_2(\rho)^{1/q}r)^EB$, 
$$\tau(x)\le \kappa\left(\tau(x+z)+\tau(z)\right)\le \kappa\left(\tau(x+z)+n_2(\rho)^{1/q}r\right),$$
where, with no loss
of generality, we have again assumed that $B\subset C_E(0,1)$ (recall \eqref{eq:CE}).
It follows that 
 $\tau(x+z)>\frac{1}{2\kappa}\tau(x)$ for any $z \in (n_2(\rho)^{1/q}r)^EB$ and $x$ such that $\tau(x)> 2\kappa n_2(\rho)^{1/q}r$.  Since $f$ is rapidly decreasing,
we get for  $N\ge 1$,
\begin{eqnarray*}
\left|\frac{1}{n_2(\rho)} T_{n_2(\rho)^{1/q}r}^Ef(x)\right|&\le & \int_{\R^d}\mathbf{1}_{r^EB}(z)\left|f(x+n_2(\rho)^{E/q}z)\right|dz\\
&\le & C\int_{\R^d}\mathbf{1}_{r^EB}(z)\left(1 +\tau(x+n_2(\rho)^{E/q}z)\right)^{-N}dz\\
&\le & Cv_Br^q\left(1 +\tau(x)\right)^{-N},
\end{eqnarray*}
where here and below, the constant $C=C(f)$ does not depend on $r$ and $\rho$.
Using that $|\tilde{\phi}_G(u)|\leq C|u|^\delta$ for $\delta \in (0,1]$, choosing $N=N(\delta,q)$ such that $N\delta>q+1$, it follows that
\begin{equation}\label{eq:g2}
\left|g_\rho^{(2)}(r)\right| 
\le Cr^{q\delta}\int_{\R^d} \left(1 +\tau(x)\right)^{-N\delta}dx
\le Cr^{q\delta}.
\end{equation}
Moreover, 
\begin{eqnarray*}
\left|g_\rho^{(1)}(r)\right| &\le & Cn_2(\rho)^{-\delta}\int_{\tau(x)\le C n_2(\rho)^{1/q}r}\left|T_{n_2(\rho)^{1/q}r}^Ef(x)\right|^\delta dx\\
&\le & Cn_2(\rho)^{-\delta}\|T_{n_2(\rho)^{1/q}r}^Ef\|_{_{L^{p\delta}}}^{\delta} \left(n_2(\rho)r^q\right)^{1-1/p},
\end{eqnarray*}
by H\"older's inequality for $p>1$. 
When $n_2(\rho)^{1/q}r\le 1$, we use \eqref{majo:u} with $p\delta \in [1,2]$. It follows that
\begin{equation}\label{eq:g1}
\left|g_\rho^{(1)}(r)\right|\le C n_2(\rho)^{-\delta}\left(n_2(\rho)r^q\right)^{1/p} \times \left(n_2(\rho)r^q\right)^{1-1/p}\le C n_2(\rho)^{1-\delta}r^q \le Cr^{q\delta},
\end{equation}
since $n_2(\rho)\le r^{-q}$.
When $n_2(\rho)^{1/q}r>1$, we use \eqref{majoS1} for $p\delta \in [1,2]$. 
By the assumption that $\beta>q^2/(q+a_d)$, we can choose $b\in (0,a_d)$ such that $\beta>q^2/(q+b)$ and
\[
\|T_{n_2(\rho)^{1/q}r}^Ef\|_{_{L^{p\delta}}}^{\delta}\le C\left(n_2(\rho)^{1/q}r\right)^{(q-b)/p},
\]
by \eqref{majoS1} since $b<a_d$.
Hence,
\[
\left|g_\rho^{(1)}(r)\right|\le C  n_2(\rho)^{-\delta+1-b/qp}r^{q-b/p}.
\]
Now we can choose $\delta=q/(q+b)\in (0, 1)$ and $p=(1+b/q)>1$ such that $\delta p=1$ and
\[
\left|g_\rho^{(1)}(r)\right|\le C r^{q-b/(1+b/q)}=Cr^{q^2/(q+b)}.
\]
Combining with the previous bounds \eqref{eq:g2} and \eqref{eq:g1} for the same $\delta=q/(q+b)$, we get
\[
\left|g_\rho(r)\right|\le C r^{q^2/(q+b)},
\]
and we have that \eqref{cond:boundgrho} holds with $\beta_- = q^2/(q+b)$ and $\beta_+ = q$ (which we have shown in \eqref{eq:beta=q} when considering the case $\beta\in(q,\alpha q)$).
We have thus proved~\eqref{eq:2.16} and the theorem.
\end{proof}

\subsection{Very-sparse regime}
Proposition~\ref{prop:Zalpha2} can be obtained as before using Theorem~\ref{thm:version}. The proof
of Theorem~\ref{thm:4} is similar to the one of Theorem~\ref{thm:3} (see also the proof of \citep[Theorem 2.19]{breton09rescaled}). The details of this part are thus omitted.


\appendix
\section{Illustrations}

We provide several simulations of our operator-scaling random ball model, obtained by following similar ideas as in \cite{BDE13}. For the sake of simplicity we choose
$E=\mbox{diag}(a_1,a_2)$ with $a_1\ge a_2:=1$ and $\beta \in (q-a_d,q)=(a_1,a_1+1)$. 
\begin{figure}[ht!]\centering
\begin{tabular}{ccc}
\includegraphics[height=4.15cm, width=5cm]{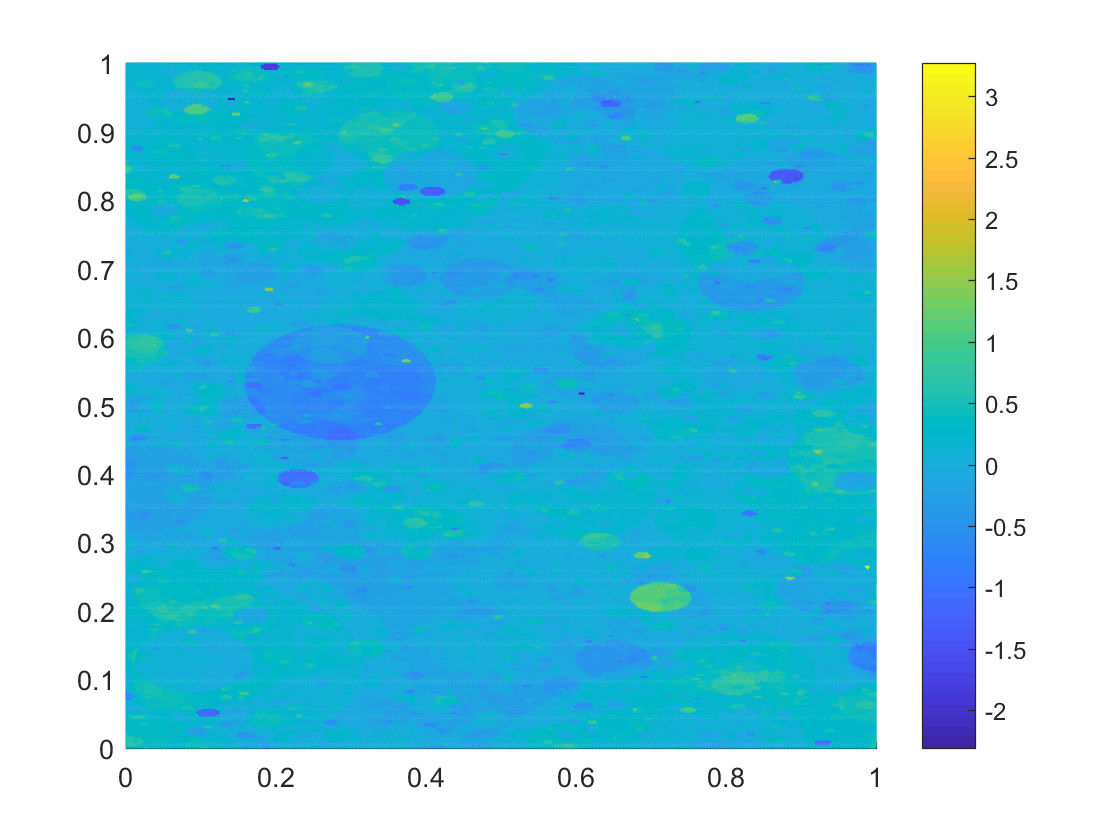}
&
\includegraphics[height=4.15cm, width=5cm]{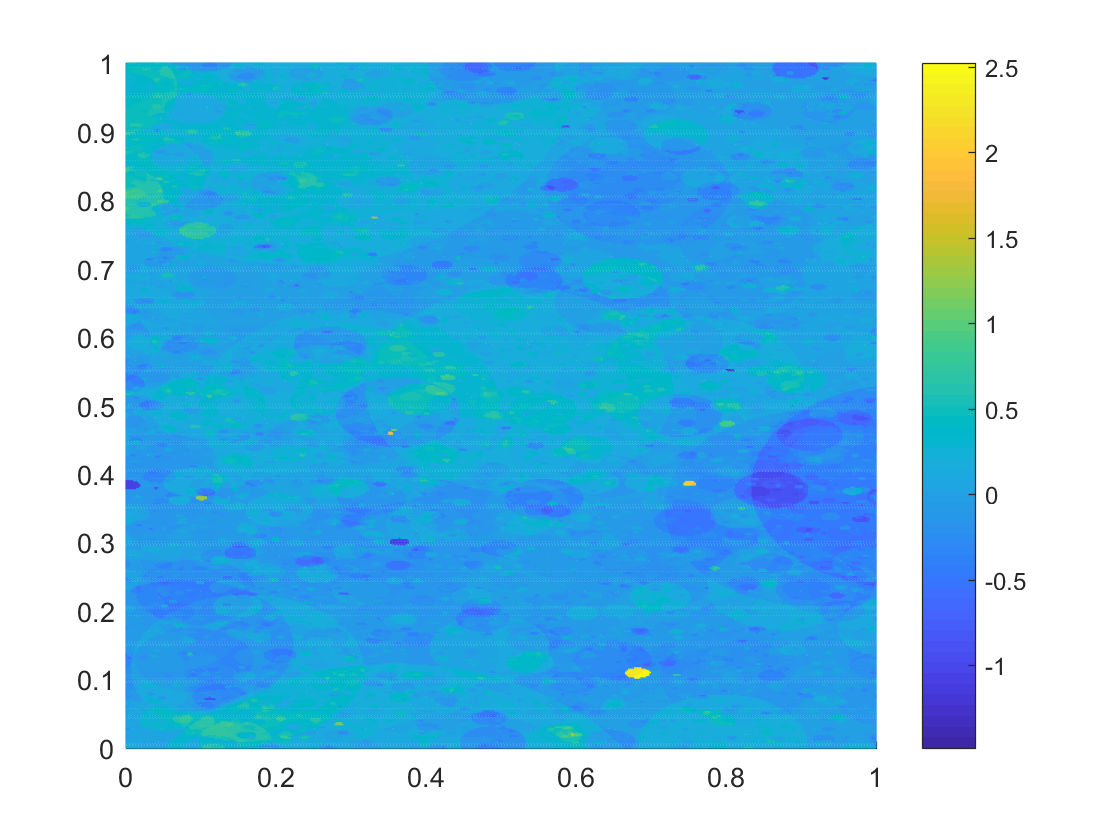}
&
\includegraphics[height=4.15cm, width=5cm]{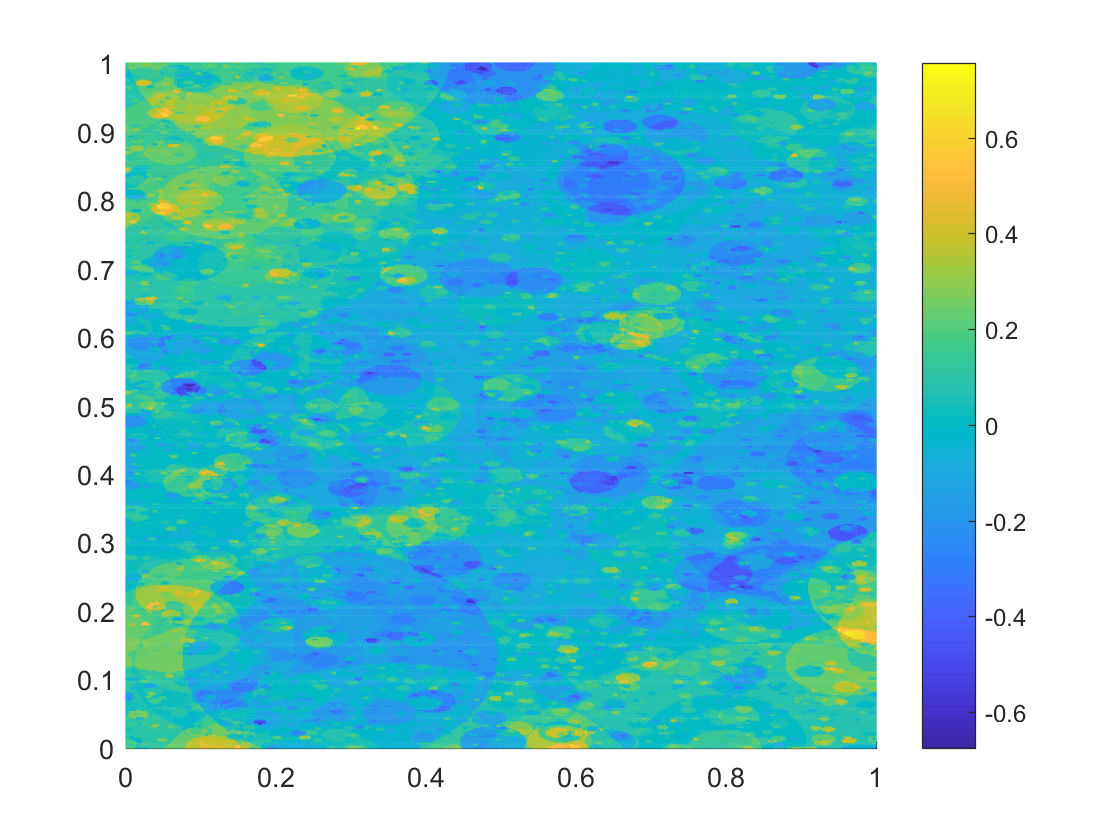}\\
$\alpha=1.7$&$\alpha=1.9$&$\alpha=2$
\end{tabular}
\vspace{-0.25cm}
\caption{\footnotesize{\textbf{Operator-scaling random ball}
 with $a_1=1.2$ and $\beta=1.6$: 
the set $B$ is an Euclidean ball,  the weights vary according to a 
$S\alpha S(\sigma)$ distribution with $\sigma=0.1$.} }\label{Fig1}
\end{figure}

\begin{figure}[ht!]\centering
\begin{tabular}{ccc}
\includegraphics[height=4.15cm, width=5cm]{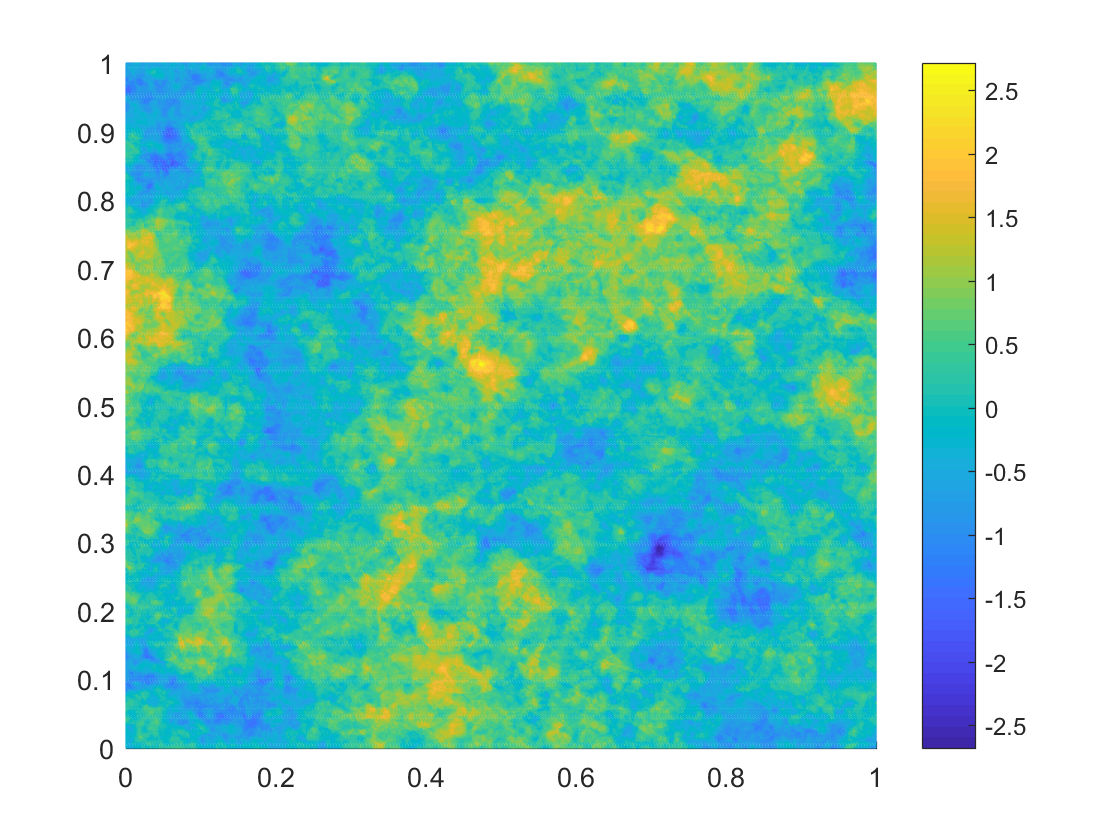}
&
\includegraphics[height=4.15cm, width=5cm]{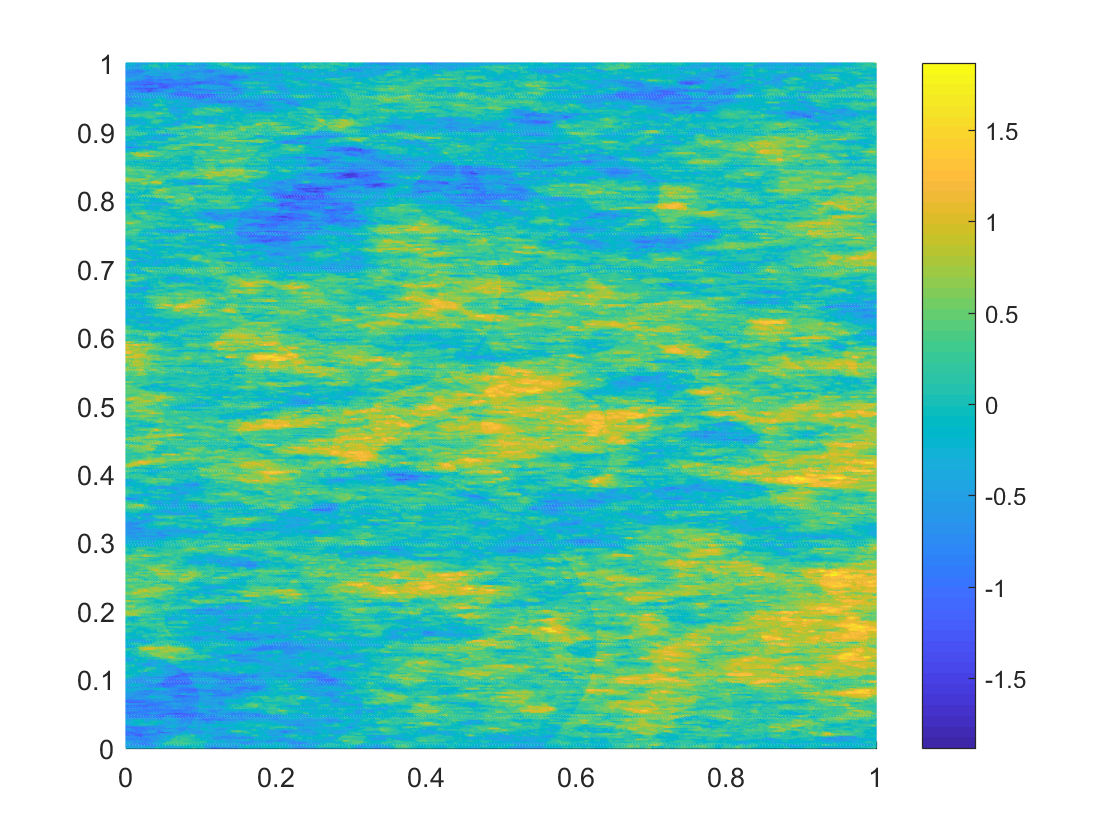}
&
\includegraphics[height=4.15cm, width=5cm]{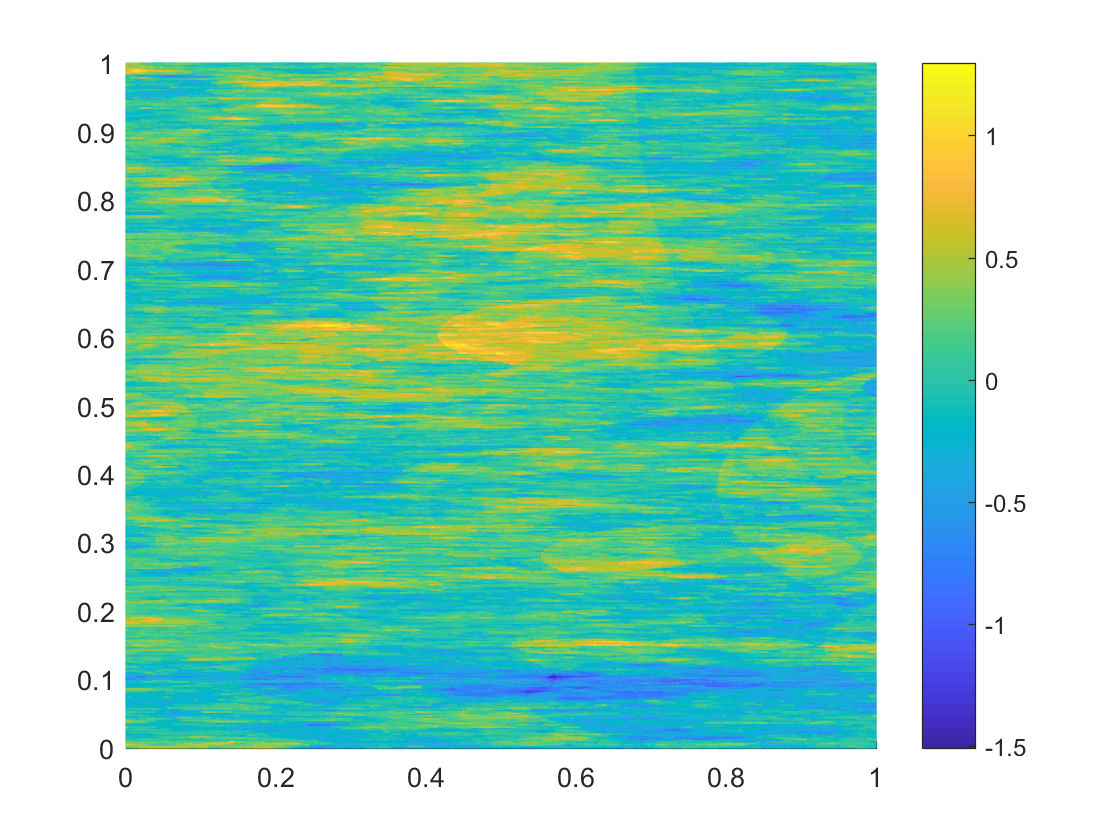}\\
\includegraphics[height=4.15cm, width=5cm]{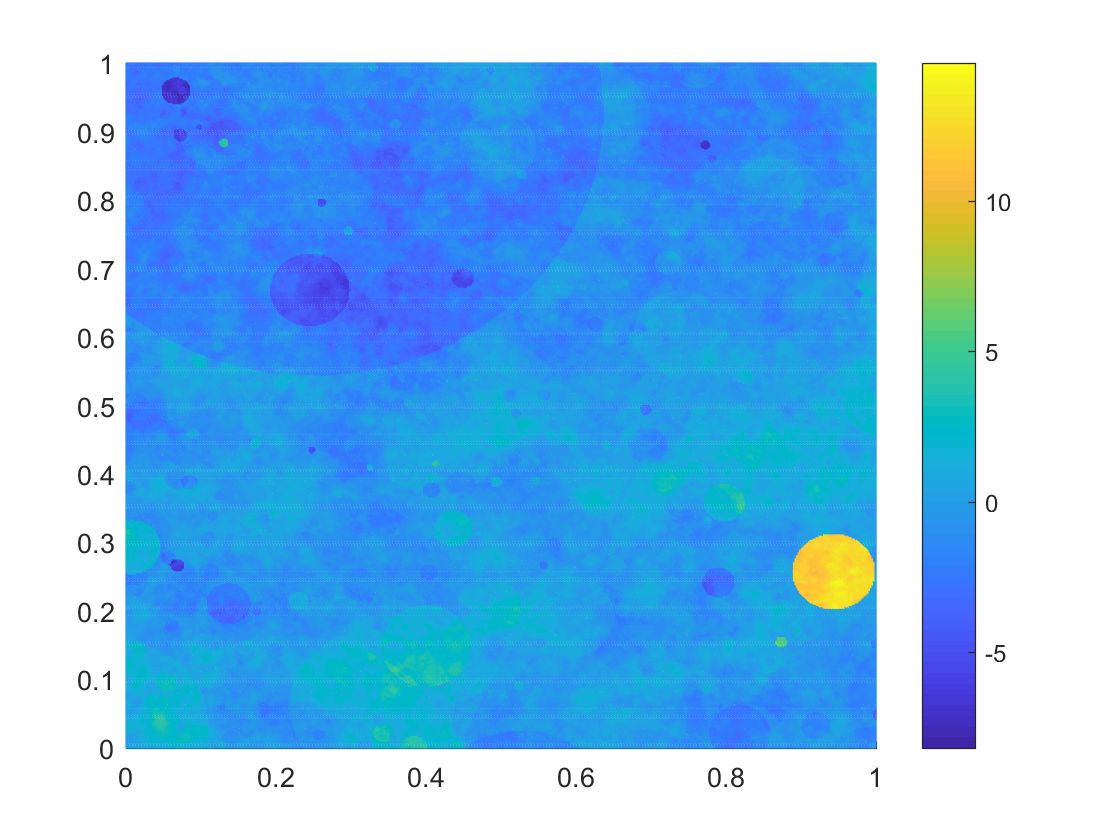}
&
\includegraphics[height=4.15cm, width=5cm]{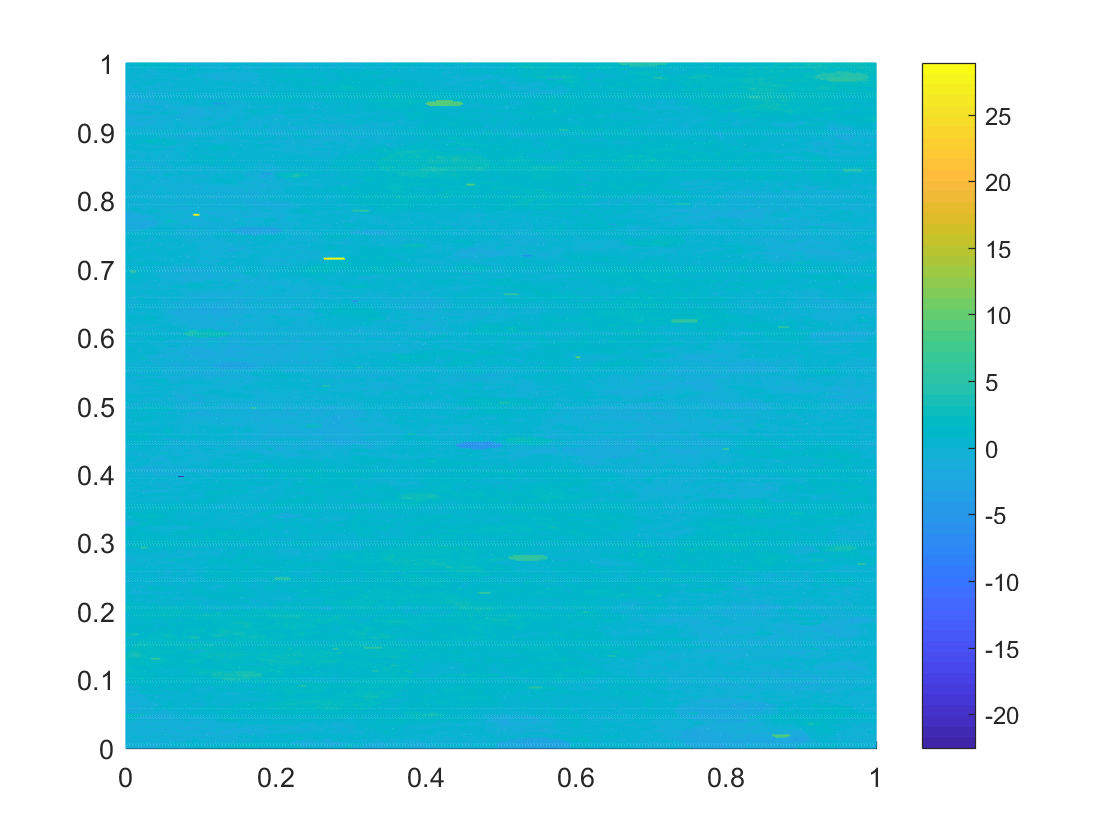}
&
\includegraphics[height=4.15cm, width=5cm]{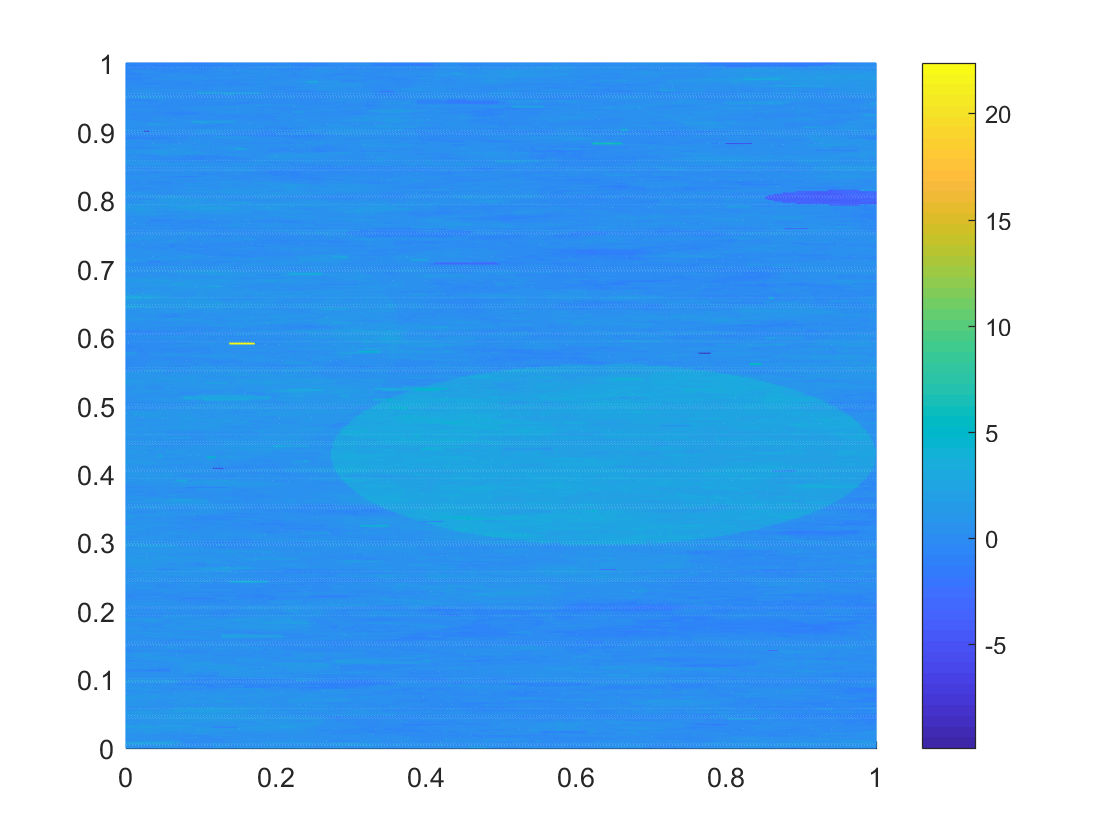}\\
$a_1=1$ (isotropic)& $a_1=1.5$&$a_1=2$
\end{tabular}
\vspace{-0.25cm}
\caption{\footnotesize{\textbf{Operator-scaling random ball} in high intensity with $H=\frac{1+a_1-\beta}{\alpha}=0.4$ and weights following a $S\alpha S(\sigma)$ distribution with $\sigma=0.1$. Top: $\alpha=2$ (Gaussian case). Bottom: $\alpha=1.8$.  } }\label{Fig2}
\end{figure}

\begin{figure}[ht!]\centering
\begin{tabular}{ccc}
\includegraphics[height=4.15cm, width=5cm]{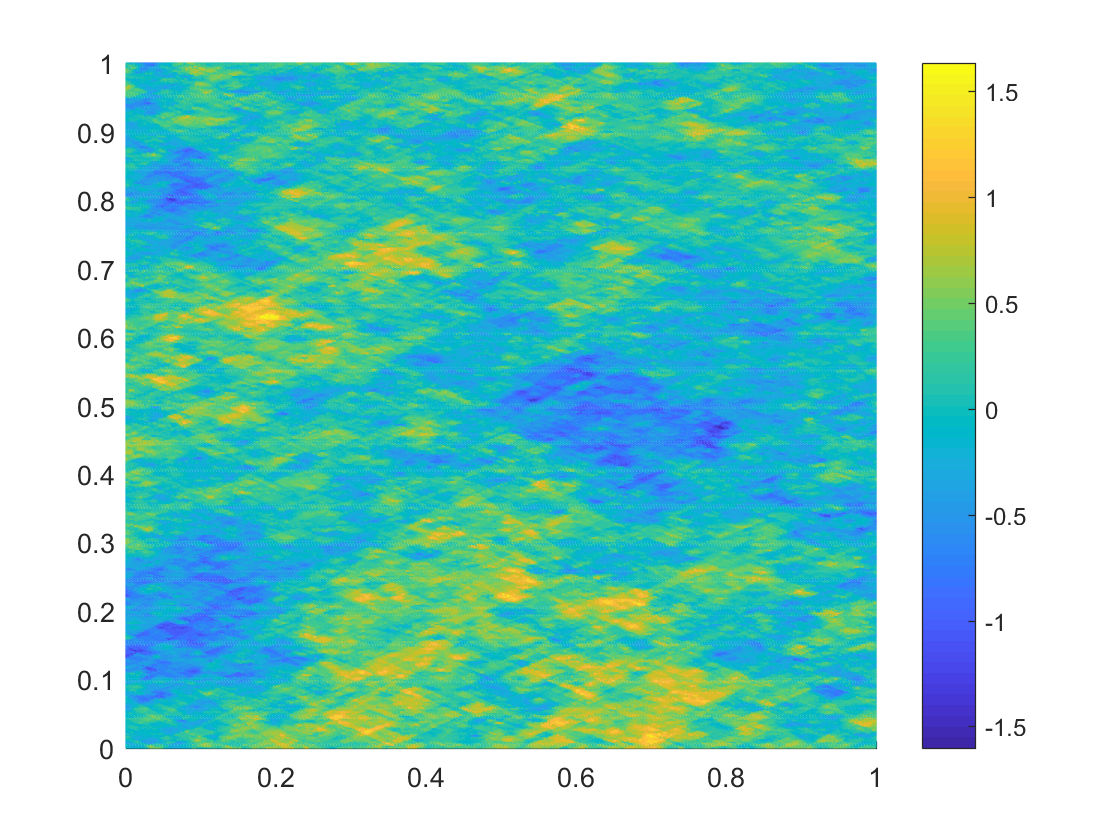}
&
\includegraphics[height=4.15cm, width=5cm]{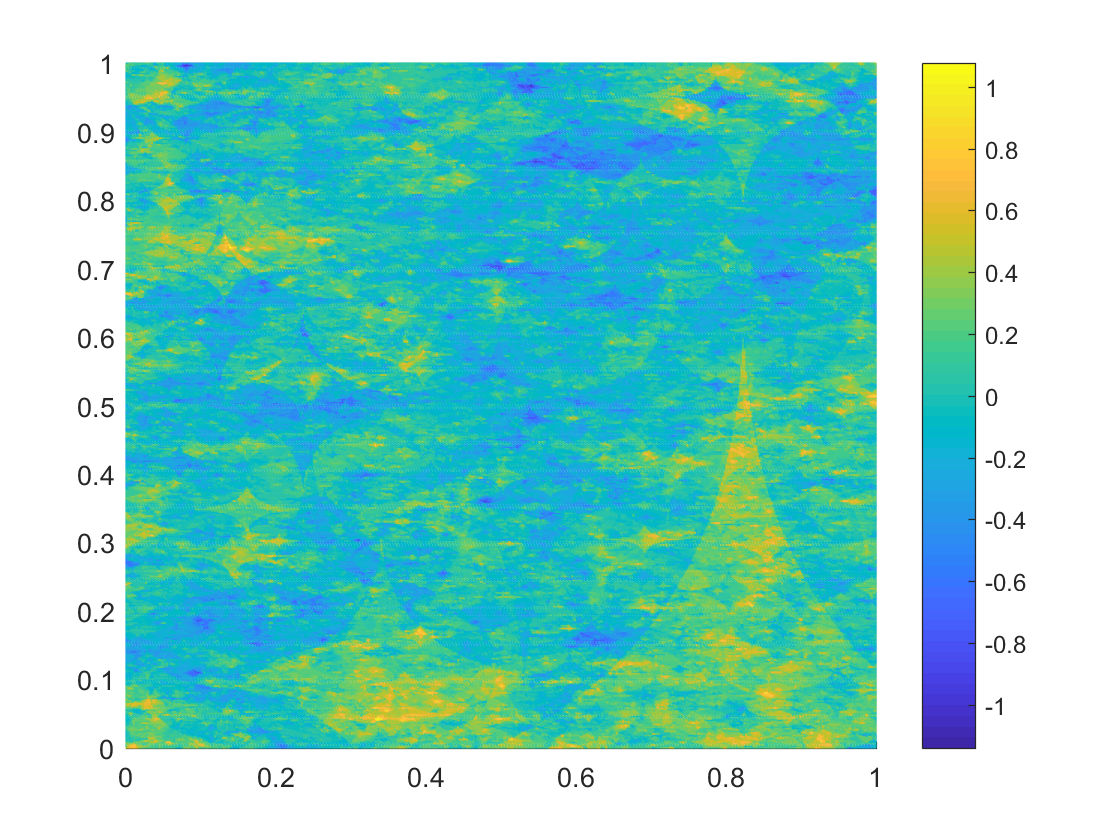}
&
\includegraphics[height=4.15cm, width=5cm]{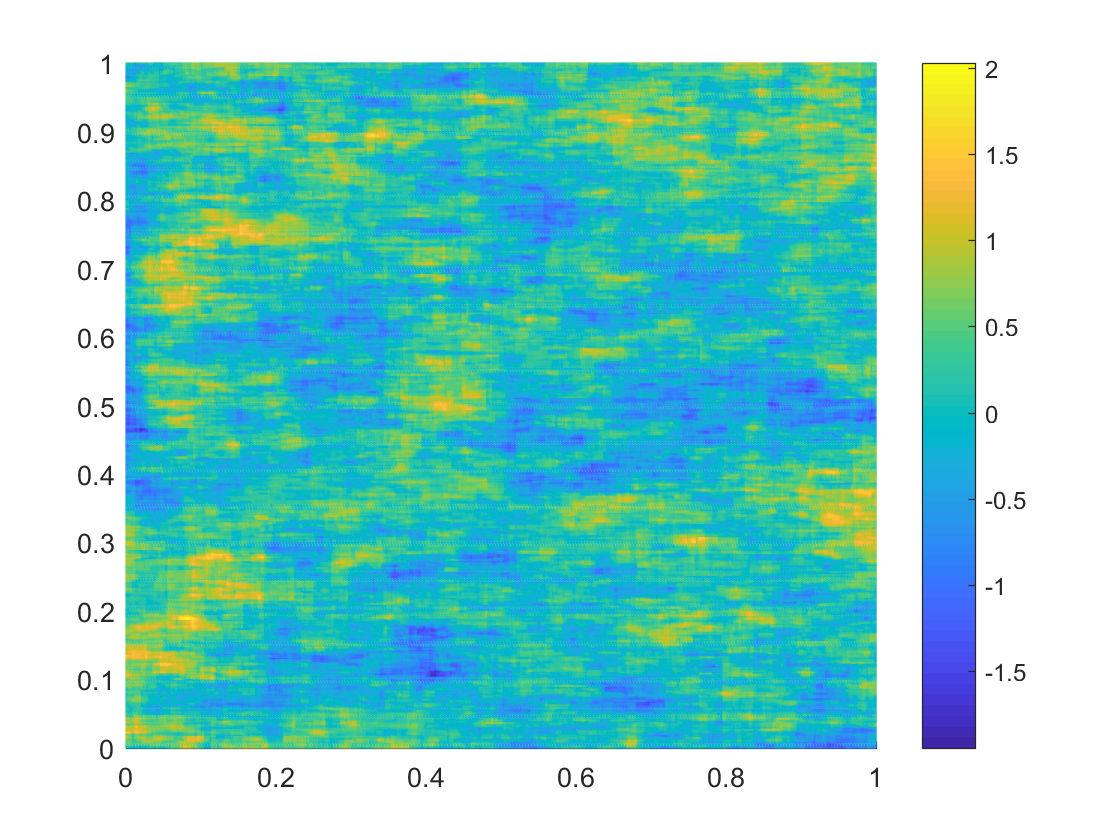}\\
\includegraphics[height=4.15cm, width=5cm]{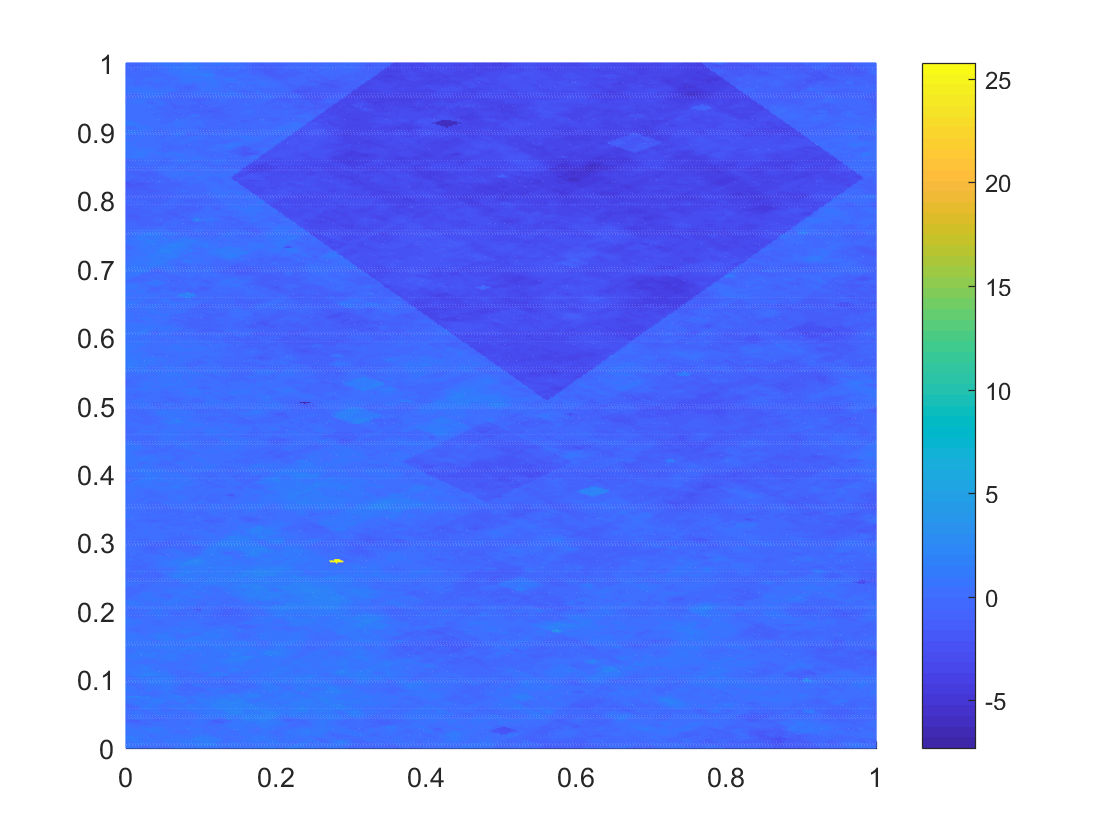}
&
\includegraphics[height=4.15cm, width=5cm]{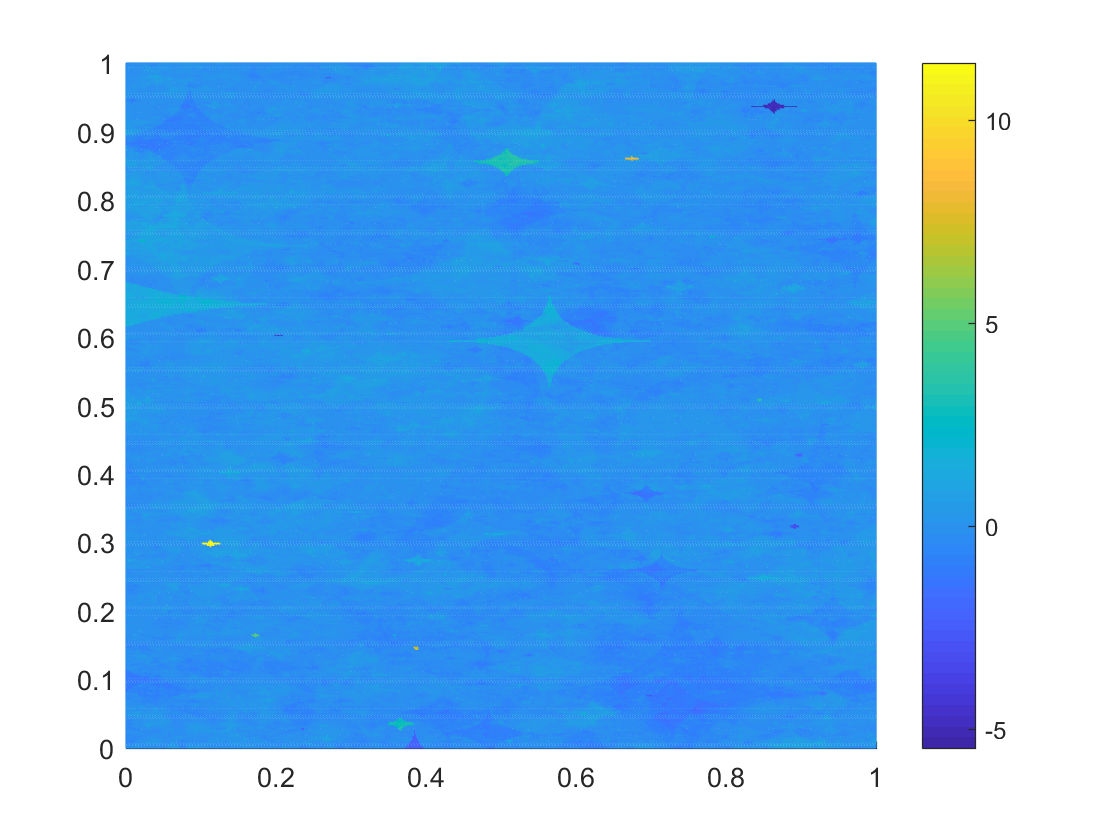}
&
\includegraphics[height=4.15cm, width=5cm]{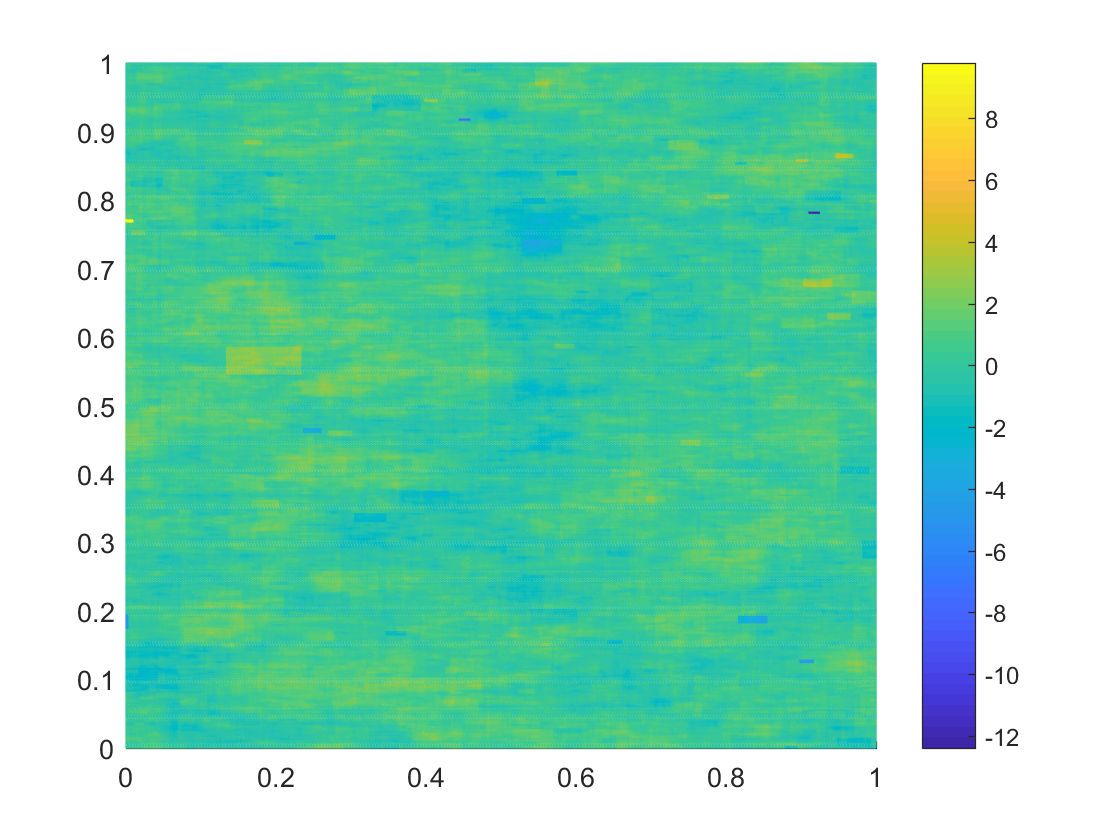}\\
$B_1$ & $B_{1/2}$&$B_\infty$
\end{tabular}
\vspace{-0.25cm}
\caption{\footnotesize{\textbf{Operator-scaling random ball} in high intensity with $H=\frac{1+a_1-\beta}{\alpha}=0.3$, $a_1=1.3$, weights following a $S\alpha S(\sigma)$ distribution with $\sigma=0.1$, and different balls: $B_1=\{x\in\R^2: |x_1|+|x_2|\le 1\}$, $B_{1/2}=\{x\in\R^2: |x_1|^{1/2}+|x_2|^{1/2}\le 1\}$ and $B_\infty=\{x\in\R^2: \max(|x_1|,|x_2|)\le 1\}$. Top: $\alpha=2$ (Gaussian case). Bottom: $\alpha=1.9$.} }\label{Fig3}
\end{figure}

\subsection*{Acknowledgments} 
YW's research was supported in part by NSA grant H98230-16-1-0322, Army Research Laboratory grant W911NF-17-1-0006, and Charles Phelps Taft Research Center at University of Cincinnati.


\bibliographystyle{apalike}
\bibliography{references}
\end{document}